\documentclass[letterpaper,10 pt,conference]{ieeeconf}
\IEEEoverridecommandlockouts

\let\labelindent\relax

\usepackage{times}
\usepackage{xspace}
\usepackage[dvipsnames]{xcolor}

\usepackage{amsmath,amssymb,mathtools,nccmath,stmaryrd}

\usepackage{amsthm}
\usepackage{bm}
\usepackage{bbold,dsfont,bbm}

\usepackage{graphicx}
\usepackage{tikz,tikz-cd,pgfplots}
\pgfplotsset{compat=1.17}
\usepackage{adjustbox}

\usepackage[hidelinks]{hyperref}
\usepackage[capitalise,noabbrev]{cleveref}
\usepackage{cite}
\usepackage{enumitem}
\usepackage{multirow}
\usepackage{booktabs}
\usepackage{algorithm}
\usepackage{algpseudocode}
\usepackage[acronym]{glossaries-extra}
\setabbreviationstyle[acronym]{long-short}
\glssetcategoryattribute{acronym}{nohyperfirst}{true}

\usepackage{comment}

\usepackage{units}
\usepackage[per-mode=symbol]{siunitx}[=v2]
\DeclareSIUnit{\eur}{\euro}
\DeclareSIUnit{\usd}{USD}
\DeclareSIUnit{\mph}{mph}
\DeclareSIUnit{\month}{month}
\DeclareSIUnit{\year}{year}
\DeclareSIUnit{\million}{Mil}
\DeclareSIUnit{\mile}{mile}
\DeclareSIUnit{\car}{car}
\DeclareSIUnit{\train}{train}
\DeclareSIUnit{\mmveh}{\text{$\mu$}MV}
\DeclareSIUnit{\nounit}{-}
\usepackage[font=footnotesize]{caption}
\usepackage{subcaption}

\usepackage{array}
\newcommand{\PreserveBackslash}[1]{\let\temp=\\#1\let\\=\temp}
\newcolumntype{C}[1]{>{\PreserveBackslash\centering}p{#1}}
\newcolumntype{R}[1]{>{\PreserveBackslash\raggedleft}p{#1}}
\newcolumntype{L}[1]{>{\PreserveBackslash\raggedright}p{#1}}

\definecolor{lightblue}{rgb}{0.60784,0.76078,0.90196}
\definecolor{darkblue}{rgb}{0.26667,0.44706,0.76863}
\definecolor{lightgreen}{rgb}{0.66275,0.81569,0.55686}
\definecolor{darkgreen}{rgb}{0.43922,0.67843,0.27843}
\definecolor{orange}{rgb}{0.92941,0.49020,0.19216}
\definecolor{yellow}{rgb}{1.00000,0.75294,0.00000}
\definecolor{grey}{rgb}{0.64706,0.64706,0.64706}
\definecolor{purple}{rgb}{0.51373,0.23529,0.04706}
\newacronym{abk:cpo}{CPO}{complete partial order}
\newacronym{abk:cdp}{CDP}{co-design problem}
\newacronym{abk:ldp}{LDP}{linear design problem}
\newacronym{abk:lcdp}{LCDP}{linear co-design problem}
\newacronym{abk:cdpi}{CDPI}{co-design problem with implementation}
\newacronym{abk:dp}{DP}{design problem}
\newacronym{abk:dpi}{DPI}{design problem with implementation}
\newacronym{abk:mdpi}{MDPI}{monotone design problem with implementation}
\newacronym{abk:dcpo}{DCPO}{directed complete partial order}
\newacronym{abk:fme}{FME}{Fourier--Motzkin elimination}
\newacronym{abk:mcdp}{MCDP}{Monotone Co-Design Problem}
\newacronym{abk:mcmc}{MCMC}{Markov chain Monte-Carlo}
\newacronym{abk:molp}{MOLP}{multi-objective linear programming}
\newacronym{abk:vlp}{VLP}{vector linear programming}
\newacronym{abk:poset}{poset}{partially ordered set}
\newacronym{abk:uav}{UAV}{unmanned aerial vehicle}
\newacronym{abk:vi}{VI}{variational inference}
\newcommand{\F}[1]{\textcolor{dpgreen}{#1}}
\newcommand{\FI}[1]{\F{\textit{#1}}}
\newcommand{\funPosetF}{{\F{F}}}

\definecolor{dpgreen}{rgb}{0.0, 0.5, 0.0}

\definecolor{dpred}{rgb}{0.7, 0.0, 0.0}
\newcommand{\R}[1]{\textcolor{dpred}{#1}}
\newcommand{\RI}[1]{\R{\textit{#1}}}
\newcommand{\resPosetR}{{\R{R}}}

\ifPDFTeX

\else

\fi
\newcommand{\defeq}{\mathrel{\raisebox{-0.3ex}{$\overset{\text{\tiny def}}{=}$}}}
\newcommand{\setWithArg}[2]{\{#1 \mid #2\}}

\newcommand{\USet}{\mathtt{U}}
\newcommand{\USetOf}[1]{\USet(#1)}
\newcommand{\upperClosure}[1]{\uparrow\!#1}

\newcommand{\DP}{\mathsf{DP}}
\newcommand{\LDP}{\mathsf{LDP}}
\newcommand{\dpOf}[2]{\DP\{#1, #2\}}
\newcommand{\ldpOf}[2]{\LDP\{#1, #2\}}
\newcommand{\dprb}{{\mathrm{dp}}}

\newcommand{\ldprb}{{\mathrm{ldp}}}
\newcommand{\dprbOf}[1]{\dprb_#1}

\newcommand{\dprba}{\dprbOf{a}}
\newcommand{\dprbb}{\dprbOf{b}}

\newcommand{\posetproduct}{\times}

\newcommand{\mthen}{\fatsemi}
\newcommand{\stack}{\otimes}
\newcommand{\trace}{\text{Tr}}
\newcommand{\traceOf}[1]{\trace(#1)}

\newcommand{\op}{^{\mathrm{op}}}

\newcommand{\subsetXP}{X_\posetP}

\newcommand{\posetP}{\mathcal{P}}
\newcommand{\posetQ}{\mathcal{Q}}
\newcommand{\posetR}{\mathcal{R}}
\newcommand{\posetPprime}{\mathcal{P}^{\prime}}
\newcommand{\posetQprime}{\mathcal{Q}^{\prime}}

\newcommand{\tup}[1]{\left\langle #1 \right\rangle}

\newcommand{\posetleq}{\preceq}
\newcommand{\posetgeq}{\succeq}
\newcommand{\posceq}{\succeq}

\newcommand{\SetU}{U}

\newcommand{\poselxF}{x_\funPosetF}
\newcommand{\barposelxF}{\bar{x}_\funPosetF}

\newcommand{\poselxR}{x_\resPosetR}

\newcommand{\barposelxR}{\bar{x}_\resPosetR}
\newcommand{\poselxP}{x_\posetP}
\newcommand{\poselyP}{y_\posetP}
\newcommand{\poselxQ}{x_\posetQ}
\newcommand{\poselyQ}{y_\posetQ}
\newcommand{\poselxPprime}{x_{\posetPprime}}

\newcommand{\poselxQprime}{x_{\posetQprime}}

\newcommand{\poselxRnoRes}{x_\posetR}
\newcommand{\posx}{x}

\definecolor{baiocchi}{RGB}{193,221,245}

\usetikzlibrary{positioning,intersections, hobby, patterns, calc, decorations.pathmorphing, decorations.markings, shadows,shapes, cd,arrows.meta, fit,quotes}

\tikzset{
   tick/.style={postaction={
      decorate,
      decoration={markings, mark=at position 0.5 with {\draw[-] (0,.4ex) -- (0,-.4ex);}}}
   }
}
\tikzstyle{block} = [draw, rectangle, minimum height=2em, minimum width=3em,font=\bfseries,rounded corners,thick]
\tikzstyle{block} = [draw, rectangle, minimum height=2em, minimum width=3em]
\tikzstyle{block1} = [draw, rectangle, minimum height=1.5em, minimum width=2.5em]
\tikzstyle{blockDyn} = [draw, rectangle, minimum height=2.5em, minimum width=3.5em, align=center, inner sep=10pt, thick, fill=white, copy shadow={draw=black,fill=black,opacity=1,shadow xshift=0.5ex,shadow yshift=-0.5ex}]
\tikzstyle{blockAlg} = [draw, rectangle, minimum height=1.5em, minimum width=2.5em, align=center, inner sep=10pt, thick]
\tikzstyle{sum} = [draw,circle]

\tikzstyle{nodePre} = [circle, draw,inner sep=1pt,node contents={$\preceq$},thick]
\tikzstyle{nodePreEmpty} = [circle, draw,inner sep=1pt,thick]
\tikzstyle{nodePos} = [circle, draw,inner sep=1pt,node contents={$\posceq$},thick]
\tikzstyle{nodeProd} = [rectangle, draw,inner sep=4pt,node contents={$\times$},rounded corners,thick]
\tikzstyle{nodeSum} = [rectangle, draw,inner sep=4pt,node contents={$\mathbf{+}$},rounded corners,thick]

\definecolor{red}{rgb}{0.75, 0.0, 0.0}

\tikzset{fcname/.store in =\fcname, fcname={}}
\tikzset{funame/.store in =\funame, funame={}}
\tikzset{rcname/.store in =\rcname, rcname={}}
\tikzset{runame/.store in =\runame, runame={}}
\tikzset{whereres/.store in =\whereres, whereres=0.5}
\tikzset{wherefun/.store in =\wherefun, wherefun=0.5}
\tikzset{relres/.store in =\relres, relres={above}}
\tikzset{relfun/.store in =\relfun, relfun={above}}
\tikzset{posres/.store in =\posres, posres=1}
\tikzset{posfun/.store in =\posfun, posfun=1}
\tikzset{loos/.store in =\loos, loos=2}
\tikzset{feedback/.store in =\feedback, feedback=0}
\tikzset{
   DP/.style={
      label/.style={
         font=\everymath\expandafter{\the\everymath\scriptstyle},
         inner sep=5pt,
         node distance=2pt and -2pt},
      semithick,
      node distance=1 and 1,
      rconn/.style={color=white,opacity=0.0,postaction={decorate}, shorten <=3.2pt, shorten >= 0.8,
      decoration={markings, 
      mark= at position 0 with {
               \coordinate (a);
      },
      mark=at position .5 with
      {
              \ifthenelse{\equal{\feedback}{1}}{\def\angleOut{90}\def\angleIn{90}}{\def\angleOut{0}\def\angleIn{180}}    
              \coordinate (b);
              \draw[dashed,dpred,opacity=1.0] (a) to[out=\angleOut,in=\angleIn,looseness=\loos] 
              node[pos=\posres,\relres=\whereres mm,dpred,opacity=1,fill=white,inner sep=1pt,outer sep=1pt]{\footnotesize{\rcname}} (b);
      },
      mark= at position 1 with 
      {
             \ifthenelse{\equal{\feedback}{1}}{\def\angleOut{0}\def\angleIn{0}}{\def\angleOut{180}\def\angleIn{0}} 
              \ifthenelse{\equal{\feedback}{1}}{\def\symbol{\succeq}}{\def\symbol{\preceq}} 
              \coordinate (c);
              \draw[dpgreen,opacity=1.0] (c) to[out=\angleOut,in=\angleIn,looseness=\loos]
              node[pos=\posfun,\relfun=\wherefun mm,dpgreen,opacity=1,fill=white,inner sep=1pt,outer sep=1pt]{\footnotesize{\fcname}} (b){}; 
              \node[draw,circle,inner sep=0.5pt,color=black,fill=white,opacity=1.0] at (b) (nodepreceq) {$\symbol$}; 
      }
      }},
      runconn/.style={color=dpred,dashed,postaction={decorate},
      decoration={markings,
      mark= at position 1 with {
              \coordinate (a);
              \draw[dpred,opacity=1.0,dashed] ($(a)+(0.05,0)$) --++ (0.5,0) node[\relres,pos=\posres]{\footnotesize{\runame}};}
      }
      },
      funconn/.style={color=white,postaction={decorate},
      decoration={markings,
      mark= at position 0 with {
      \coordinate (a);
      \draw[dpgreen] ($(a)+(-0.05,0)$) -- ($(a)+(-0.5,0)$) node[\relfun, pos=\posfun]{\footnotesize{\funame}};}
      }
      },
      execute at begin picture={\tikzset{
         x=\dpx, y=\dpy,
         every fit/.style={inner xsep=\dpx, inner ysep=\dpy}}}
      },
   dpx/.store in=\dpx,
   dpx = 1.5cm,
   dpy/.store in=\dpy,
   dpy = 1.5ex,
   dp port sep/.store in=\dpportsep,
   dp port sep=2,
   dp port length/.store in=\dpportlen,
   dp port length=4pt,
   dp min width/.store in=\dpminwidth,
   dp min width=0.5cm,
   dp rounded corners/.store in=\dpcorners,
   dp rounded corners=2pt,
   dp small/.style={dp port sep=1, dp port length=2.5pt, dpx=.4cm, dp min width=.4cm, dpy=.7ex},
   dp/.code 2 args={
      \pgfmathsetlengthmacro{\dpheight}{\dpportsep * (max(#1,#2)) * \dpy}
      \pgfkeysalso{draw,%
        minimum width=\dpminwidth,%
        minimum height=\dpheight,%
        font=\bfseries,
        outer sep=0pt,%
        inner sep=5pt,%
        rounded corners=\dpcorners,
        thick,
        prefix after command={\pgfextra{\let\fixname\tikzlastnode}},
        append after command={\pgfextra{\draw
            \ifnum #1=0{} \else foreach \i in {1,...,#1} { 
            ($(\fixname.north west)!{\i/(#1+1)}!(\fixname.south west)$) +(0,0) node[solid,left,circle,color=dpgreen,draw,fill=dpgreen,scale=0.3] {} coordinate (\fixname_fun\i) -- +(0,0) coordinate (\fixname_fun\i')}\fi 
            \ifnum #2=0{} \else foreach \i in {1,...,#2} {
            ($(\fixname.north east)!{\i/(#2+1)}!(\fixname.south east)$) +(0,0) coordinate (\fixname_res\i') -- +(0,0) node[solid,right,circle,color=dpred,draw,fill=dpred,scale=0.3] {} coordinate (\fixname_res\i)}\fi;
         }}}
         },
      dp name/.style={append after command={\pgfextra{\node[label=center,inner sep=2pt,fill=white] at (\fixname) {\textbf{#1}};}}}
   }
\renewcommand{\baselinestretch}{1}

\theoremstyle{definition}

\newtheorem*{assumption*}{Assumption}
\newtheorem{theorem}{Theorem}

\newtheorem{lemma}[theorem]{Lemma}

\newtheorem{definition}{Definition}
\theoremstyle{remark}
\newtheorem*{remark}{Remark}

\Crefname{figure}{Fig.}{Figures}
\crefname{equation}{Eq.}{Eqs.}

\makeatletter
    \if@cref@capitalise
        \crefname{subsection}{Section}{Sections}
        \crefname{subsubsection}{Section}{Sections}
        \crefname{assumption}{Assumption}{Assumptions}
        \crefname{problem}{Problem}{Problems}
    \else
        \crefname{subsection}{section}{sections}
        \crefname{subsubsection}{section}{sections}
        \crefname{assumption}{assumption}{assumptions}
        \crefname{problem}{problem}{problems}
    \fi
\makeatother

\usetikzlibrary{positioning,intersections, hobby, patterns, calc, decorations.pathmorphing, decorations.markings, shadows,shapes, cd,arrows.meta, fit,quotes}
\tikzset{
   tick/.style={postaction={
      decorate,
      decoration={markings, mark=at position 0.5 with {\draw[-] (0,.4ex) -- (0,-.4ex);}}}
   }
}
\tikzstyle{block} = [draw, rectangle, minimum height=2em, minimum width=3em,font=\bfseries,rounded corners,thick]
\tikzstyle{block1} = [draw, rectangle, minimum height=1.5em, minimum width=2.5em]
\tikzstyle{blockDyn} = [draw, rectangle, minimum height=2.5em, minimum width=3.5em, align=center, inner sep=10pt, thick, fill=white, copy shadow={draw=black,fill=black,opacity=1,shadow xshift=0.5ex,shadow yshift=-0.5ex}]
\tikzstyle{blockAlg} = [draw, rectangle, minimum height=1.5em, minimum width=2.5em, align=center, inner sep=10pt, thick]
\tikzstyle{sum} = [draw,circle]

\tikzstyle{blockfill} = [block,rounded corners=4,fill=white]

\tikzstyle{nodePre} = [circle, draw,inner sep=1pt,node contents={$\preceq$},thick]
\tikzstyle{nodePreEmpty} = [circle, draw,inner sep=1pt,thick]
\tikzstyle{nodePos} = [circle, draw,inner sep=1pt,node contents={$\posceq$},thick]
\tikzstyle{nodeProd} = [rectangle, draw,inner sep=4pt,node contents={$\times$},rounded corners,thick]
\tikzstyle{nodeSum} = [rectangle, draw,inner sep=4pt,node contents={$\mathbf{+}$},rounded corners,thick]

\definecolor{DPgreen}{rgb}{0.0, 0.5, 0.0}
\definecolor{red}{rgb}{0.75, 0.0, 0.0}

\newif\ifmargincomments 
\margincommentstrue

\newif\ifextendedversion 

\begin{document}

\title{
\textbf{Scalable Co-Design via Linear Design Problems:\\ Compositional Theory and Algorithms}
}
\author{Yubo Cai, Yujun Huang, Meshal Alharbi, Gioele Zardini
\thanks{The authors are with the Laboratory for Information and Decision Systems, Massachusetts Institute of Technology, 02139 Cambridge (MA), USA, {\tt \{yubocai,yujun233,meshal,gzardini\}@mit.edu}}
\thanks{This material is based upon work supported by the Defense Advanced Research Projects Agency (DARPA) under Award No. D25AC00373. The views and conclusions contained in this document are those of the authors and should not be interpreted as representing the official policies, either expressed or implied, of the U.S. Government.}
}

\maketitle

\begin{abstract}
Designing complex engineered systems requires managing tightly coupled trade-offs between subsystem capabilities and resource requirements.
Monotone co-design provides a compositional language for such problems, but its generality does not by itself reveal which problem classes admit exact and scalable computation.
This paper isolates such a class by introducing \glspl{abk:ldp}: design problems whose feasible functionality--resource relations are polyhedra over Euclidean posets.
We show that queries on LDPs reduce exactly to Multi-Objective Linear Programs (MOLPs), thereby connecting monotone co-design semantics with polyhedral multiobjective optimization.
We further prove that LDPs are closed under the fundamental co-design interconnections, implying that any interconnection of linear components induces a system-level LDP.
To compute the resulting feasible sets, we develop two complementary constructions: a monolithic lifted formulation that preserves block-angular sparsity, and a compositional formulation that incrementally eliminates internal variables through polyhedral projection.
Beyond the exact linear setting, we show that convex co-design resource queries admit arbitrarily accurate polyhedral outer approximations, with recession-cone error identically zero for standard nonnegative resource cones.
Numerical studies on synthetic series-chain benchmarks, a gripper, and a rover co-design validate the theory.
\end{abstract}

\glsresetall


\section{Introduction}\label{sec:intro}
Designing complex engineered systems requires navigating tightly coupled trade-offs across heterogeneous subsystems.
In autonomy, robotics, and cyber-physical systems, choices in sensing, computation, actuation, energy, and physical structure jointly determine achievable performance, cost, and safety margins.
As a result, optimizing each subsystem in isolation can yield architectures that are globally infeasible or far from Pareto optimal.
This motivates \emph{co-design}: the joint design of interacting subsystems under coupled \F{functionality} and \R{resource} constraints~\cite{zardini2023co,MartinsLambe2013,Seshia2017TCAD}.

A principled foundation for this viewpoint is \emph{monotone co-design}, where each component is modeled as a \emph{\gls{abk:dp}}: a monotone relation between \glspl{abk:poset} of \F{functionalities} and \R{resources}~\cite{zardini2023co,censi2022,censi2015mathematical}.
Its key advantage is the separation of \emph{structure} from \emph{semantics}: the architecture is encoded by an interconnection graph, while each component contributes only its local feasibility relation.
Series, parallel, intersection, and feedback then induce a system-level \gls{abk:dp} compositionally from the component \glspl{abk:dp}, enabling modular design and reuse across domains~\cite{zardiniecc21,zardiniTaskdrivenModularCodesign2022,milojevic2025codei,neumann2024strategic,zardini2022co}.

The generality of monotone co-design is both its strength and its limitation.
It provides a domain-agnostic semantics for compositional design, but does not by itself identify which structured subclasses admit exact and scalable computation.
In many engineering applications, functionality--resource trade-offs are naturally linear or polyhedral, yet current co-design frameworks do not isolate this case as a tractable subclass with its own closure theory, query semantics, and algorithms.
This paper addresses that gap by introducing \emph{\glspl{abk:ldp}}, whose feasible functionality--resource sets are polyhedra over Euclidean \glspl{abk:poset}.
Within monotone co-design, \glspl{abk:ldp} play a role analogous to linear programming in mathematical optimization: they isolate a tractable structural core, admit exact algorithmic treatment, and provide a natural backbone for approximating broader convex models.

\subsection{Related Work}
This work sits at the intersection of three lines of research.
First, system-level design and multidisciplinary design optimization emphasize that engineering decisions must be coordinated across interacting disciplines and components~\cite{MartinsLambe2013,Seshia2017TCAD,karkus2023diffstack}.
Second, monotone co-design provides a compositional and order-theoretic language for such problems, with exact semantics based on monotone relations and fixed-point computation~\cite{censi2015mathematical,censi2017uncertainty,censi2022}.
Third, our computational viewpoint connects co-design to multi-objective and vector linear optimization, where efficient-frontier computation, polyhedral projection, and Benson-type algorithms are well developed~\cite{ehrgott2005multicriteria,Lohne2011,lohne_equivalence_2016,lohne_vector_2017,benson1998outer}.

Existing co-design tools are intentionally general.
For instance, \textsc{MCDPL} can represent broad classes of monotone relations and handle continuous specifications through discretization and consistent relaxations~\cite{censi2015mathematical,zardini2023co}.
However, it does not exploit linear or polyhedral structure explicitly; thus even simple linear trade-offs may be treated through generic antichain propagation or discretization rather than exact polyhedral algorithms.
What is missing is a structural linear theory \emph{within} the co-design formalism itself.

\subsection{Statement of Contribution}
The contributions of the paper are as follows.
First, we define \glspl{abk:ldp} and show that their natural queries reduce to \glspl{abk:molp}/\glspl{abk:vlp}, establishing a direct bridge between monotone co-design and polyhedral multi-objective optimization.
Second, we prove that \glspl{abk:ldp} are closed under the main co-design interconnections; hence any \gls{abk:lcdp} assembled from linear components induces a system-level \gls{abk:ldp}.
Third, we develop two complementary feasible-set construction methods for \glspl{abk:lcdp}: (i) a \emph{monolithic} lifted formulation that preserves block-angular sparsity, and (ii) a \emph{compositional} formulation that eliminates internal variables through graph-guided projection.
Fourth, we extend the framework beyond the exact linear case by showing that convex co-design resource queries admit arbitrarily accurate polyhedral outer approximations; for the standard resource cone $\mathbb{R}_{\ge 0}^{n}$, the recession-cone error is structurally exact ($\delta=0$), so accuracy is governed solely by the bounded-part error~$\varepsilon$.
Finally, we validate the theory on three benchmarks: a synthetic series-chain family that isolates the trade-off between sparsity-preserving and elimination-based constructions, a rigid gripper \gls{abk:ldp} illustrating the exact polyhedral regime, and a nonlinear rover case study in which tangent-plane outer approximations yield accurate $\left(\varepsilon,0\right)$ surrogates and substantial runtime gains over \textsc{MCDPL}.
\section{Linear Co-Design Theory}\label{sec:linear-dp-theory}

We formalize \glspl{abk:ldp}, i.e., \glspl{abk:dp} whose feasible sets are polyhedra over Euclidean \glspl{abk:poset}.
We first recall the order-theoretic and convex-geometric notation used throughout, then introduce \glspl{abk:ldp} and their query semantics.

\subsection{Mathematical preliminaries}\label{subsec:math_prelim}
\subsubsection{Background on orders}\label{sec:app_order}

\begin{definition}[Poset]
A \emph{\gls{abk:poset}} is a tuple~$\posetP =\tup{P,\preceq_\posetP}$, where $P$ is a set and~$\preceq_\posetP$ is a partial order (a reflexive, transitive, and antisymmetric relation).
If clear from context, we use~$\posetP$ for a \gls{abk:poset}, and~$\posetleq$ for its order.
\end{definition}

\begin{definition}[Opposite poset]
The \emph{opposite} of a \gls{abk:poset}~$\posetP = \tup{P,\preceq_\posetP}$ is the poset $\posetP\op \defeq\tup{P,\posetleq_{\posetP}\op}$ with the same elements and reversed ordering:
    $
        \poselxP \posetleq_{\posetP}\op \poselyP \Leftrightarrow
        \poselyP \posetleq_{\posetP}\poselxP
    $.
\end{definition}

\begin{definition}[Product poset]
    Given \glspl{abk:poset} $\tup{P,\preceq_{\posetP}}$ and $\tup{Q,\preceq_{\posetQ}}$, their \emph{product} $\tup{P\times Q,\preceq_{\posetP\times \posetQ}}$ is the poset with
    \begin{equation*}
        \tup{\poselxP,\poselxQ}\preceq_{\posetP\times \posetQ}\tup{\poselyP,\poselyQ} \Leftrightarrow (\poselxP \preceq_{\posetP} \poselyP) \wedge (\poselxQ \preceq_\posetQ \poselyQ).
    \end{equation*}
\end{definition}

\begin{definition}[Upper closure]\label{def:upper-closure}
Let $\posetP$ be a \gls{abk:poset}. 
The \emph{upper closure} of a subset $\subsetXP \subseteq \posetP$ contains all elements of $\posetP$ that are greater or equal to some $\poselyP \in \subsetXP$:
    \begin{equation*}
        \upperClosure{\subsetXP} \defeq \setWithArg{\poselxP \in \posetP}{\exists \poselyP \in \subsetXP : \poselyP \posetleq_\posetP \poselxP}.
    \end{equation*}
\end{definition}

\begin{definition}[Upper set]\label{def:uppersets-of-posets}
    A subset $\subsetXP \subseteq \posetP$ of a \gls{abk:poset} is called an \emph{upper set} if it is upwards closed: $\upperClosure{\subsetXP} = \subsetXP$.
    We write $\USetOf{\posetP}$ for the set of upper sets of $\posetP$.
    We regard $\USetOf{\posetP}$ as a \gls{abk:poset} with $\SetU \posetleq \SetU' \Leftrightarrow \SetU \supseteq \SetU'$.
    Thus larger upper sets are considered \emph{smaller} in this order (encoding weaker constraints).
\end{definition}

\subsubsection{Convex geometry}\label{sec:cvx_def}

Throughout,~$\Vert\cdot\Vert$ denotes the Euclidean norm.

\begin{definition}[Set operations]\label{def:set-ops}
For a set $C\subseteq\mathbb{R}^n$, define the \emph{convex hull}, \emph{conical hull}, and \emph{closure} by
\begin{align*}
  \operatorname{conv}(C)
  &\defeq
  \Bigl\{\textstyle\sum_{i=1}^k \lambda_i c_i \mid
  k\in\mathbb{N}, c_i\in C, \lambda_i\ge 0, \sum_i\lambda_i=1\Bigr\},\\
  \operatorname{cone}(C)
  &\defeq
  \Bigl\{\textstyle\sum_{i=1}^k \lambda_i c_i \mid
  k\in\mathbb{N},\ c_i\in C,\ \lambda_i\ge 0\Bigr\},\\
  \operatorname{cl}(C)
  &\defeq
  \bigl\{x\in\mathbb{R}^n \mid \exists\,(x_k)_{k\ge 1}\subseteq C,\ x_k\to x\bigr\}.
\end{align*}
These are the smallest convex set, convex cone, and closed set containing $C$, respectively.
A point $v\in P$ is a \emph{vertex} (extreme point) of a set $P$ if it cannot be expressed as $v=\tfrac{1}{2}(x+y)$ for distinct $x,y\in P$.
We write $\operatorname{vert}(P)$ for the set of vertices of a polyhedron $P$.
\end{definition}

\begin{definition}[Basic convex sets]
    The following sets are basic examples of convex sets in $\mathbb{R}^n$:
    \begin{enumerate}[label=(\arabic*), leftmargin=*]
        \item \emph{Linear subspace:} $L \defeq \setWithArg{x \in \mathbb{R}^n}{Ax = 0}$ for some matrix $A \in \mathbb{R}^{m \times n}$.
        \item \emph{Affine subspace:} $M \defeq L + x_0$, where $L$ is a linear subspace and $x_0 \in \mathbb{R}^n$. Equivalently, $M = \setWithArg{x \in \mathbb{R}^n}{Ax=b}$ with $b = A x_0$.
        \item \emph{Hyperplane:} $H \defeq \setWithArg{x \in \mathbb{R}^n}{a^\top x = b}$, where $a \in \mathbb{R}^n$ and $b \in \mathbb{R}$. It is an $(n-1)$-dimensional affine subspace.
        \item \emph{Halfspace:} $H^{+} \defeq \setWithArg{x \in \mathbb{R}^n}{a^\top x \ge b}$ and $H^{-} \defeq \setWithArg{x \in \mathbb{R}^n}{a^\top x \le b}$, which are (by default) closed halfspaces determined by the hyperplane $H$.
    \end{enumerate}
\end{definition}
\begin{definition}[Polyhedron]
    A \emph{polyhedron} $P$ is the intersection of a finite number of halfspaces. That is:
    \begin{equation*}
    \begin{aligned}
        P= \bigcap_{i=1}^m H_i^{+}\defeq\bigcap_{i=1}^m\left\{x \in \mathbb{R}^n: a_i^{\top} x \geq b_i\right\}=\left\{x \in \mathbb{R}^n: A x \geq b\right\},
    \end{aligned}
    \end{equation*}
    where~$a_i^{\top}$ form the rows of $A$ and $b_i$ are the components of $b$. This is called the \emph{$\mathbf{H}$-representation} of a polyhedron.
    A polyhedron can also be written in \emph{$\mathbf{V}$-representation} as
    \begin{equation*}
        P=\operatorname{conv}\{v^1,\ldots,v^p\}+\operatorname{cone}\{r^1,\ldots,r^q\},
    \end{equation*}
    where $\{v^1,\ldots,v^p\}$ are vertices and $\{r^1,\ldots,r^q\}$ are extreme rays.
    A bounded polyhedron ($q=0$) is called a \emph{polytope}.
\end{definition}

\begin{remark}[H/V representations and conversion]
For polyhedra in finite-dimensional spaces, $\mathbf{H}$- and $\mathbf{V}$-representations are equivalent (Minkowski--Weyl theorem).
Converting $\mathbf{H}\to\mathbf{V}$ amounts to vertex/ray enumeration, while converting $\mathbf{V}\to\mathbf{H}$ amounts to facet enumeration; both can be exponential in the worst case.
In this paper, feasible sets are constructed primarily in $\mathbf{H}$-representation from linear constraints, and $\mathbf{V}$-information is recovered through polyhedral projection or \gls{abk:molp}/\gls{abk:vlp} solvers when needed~\cite{fukuda2016lecture,lohne_equivalence_2016,lohne_vector_2017}.
\end{remark}

\subsection{Background on co-design and queries}\label{subsec:co-design-intro}
Co-design provides a compositional perspective for analyzing complex systems by making explicit the trade-offs between what a subsystem can \F{provide} and what it must \R{consume}.
As a motivating example, consider the design of \glspl{abk:uav} (see \cite[Fig 1(a)]{censi2017uncertainty}), in which perception is a critical sub-module.
Holding the remaining system conditions fixed, the perception unit can be viewed as delivering a target level of \FI{detection accuracy} at the cost of \RI{computation power}, under prescribed \RI{weather conditions}.
Although accuracy and computation can be naturally represented by ordered real vectors, weather conditions are often only partially ordered (and may contain incomparable scenarios).
For example, a clear night and a foggy day induce qualitatively different sensing regimes, so a design tailored to one condition may not transfer to the other.
Monotone co-design formalizes these heterogeneous trade-offs by representing components as \glspl{abk:dp} defined over \glspl{abk:poset} of \F{functionalities} and \R{resources}.

\begin{definition}[\Gls{abk:dp}]\label{def:dp}
    Given \glspl{abk:poset}~$\funPosetF$ and $\resPosetR$ of \FI{functionalities} and \RI{resources}, a \emph{\gls{abk:dp}} is an \emph{upper set} of~$\funPosetF\op \posetproduct \resPosetR$.
    We denote the set of such \glspl{abk:dp} by~$\dpOf{\funPosetF}{\resPosetR}$.
    Given a \gls{abk:dp}~$\dprb$, a pair~$\tup{\poselxF, \poselxR}$ of functionality~$\poselxF$ and resource~$\poselxR$ is \emph{feasible} if~$\tup{\poselxF, \poselxR} \in \dprb$.
    We order~$\dpOf{\funPosetF}{\resPosetR}$ by inclusion:~$\dprba \posetleq \dprbb \Leftrightarrow \dprba \subseteq \dprbb$.
    Note that this is the opposite of the ordering used for upper sets, i.e., larger feasible sets correspond to ``better'' \glspl{abk:dp}.
\end{definition}

The upper set condition captures the following intuition: if a resource $\poselxR$ suffices to provide functionality $\poselxF$, then it also suffices for any worse functionality $\poselxF' \posetleq \poselxF$.
Moreover, any better resource $\poselxR' \posetgeq \poselxR$ should also suffice to provide $\poselxF$.

A core tenet of co-design is to compose systems out of simpler sub-systems. Such composites are formalized as multi-graphs of \glspl{abk:dp} called \emph{\glspl{abk:cdp}}.
We summarize the main composition operations in \cref{def:dp-interconnections}, with some shown diagrammatically in \cref{fig:dp_def}.

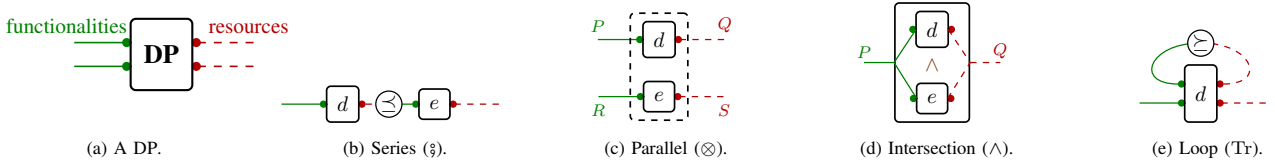
\begin{figure*}[tb]
    \centering
    \begin{subfigure}[b]{0.19\textwidth}
        \centering
        \begin{tikzpicture}[DP]
            \node[dp={2}{2}] (cnt) {DP};
            \draw[runconn, runame={resources}, relres=above,posres=0.9] (cnt_res1){};
            \draw[runconn, runame={}, relres=above,posres=0.9] (cnt_res2){};
            \draw[funconn, funame={functionalities},relfun=above,posfun=1.15] (cnt_fun1){};
            \draw[funconn, funame={},relfun=above,posfun=1.15] (cnt_fun2){};
\end{tikzpicture}
        \subcaption{A \gls{abk:dp}.\label{fig:mathcodesign}}
    \end{subfigure}\hfill
    \begin{subfigure}[b]{0.19\textwidth}
        \centering
        \scalebox{0.8}{\begin{tikzpicture}[DP]
    \node[dp={1}{1}] (f) {$d$};
    \node[dp={1}{1}, right=1cm of f] (g) {$e$};
    \draw[rconn, rcname={}, fcname={}] (f_res1)  to (g_fun1);
    \draw[runconn, runame={}] (g_res1);
    \draw[funconn, funame={}] (f_fun1);
\end{tikzpicture}}
        \subcaption{Series ($\mthen$).}
    \end{subfigure}\hfill
    \begin{subfigure}[b]{0.19\textwidth}
        \centering
        \scalebox{0.8}{\begin{tikzpicture}[DP]
    \node[dp={1}{1}] (d) {$d$};
    \node[dp={1}{1}, below=0.4cm of d] (e) {$e$};

    \draw[funconn, funame={$P$}, relfun=above] (d_fun1);
    \draw[runconn, runame={$Q$}, relres=above] (d_res1);
    \draw[funconn, funame={$R$}, relfun=below] (e_fun1);
    \draw[runconn, runame={$S$}, relres=below] (e_res1);

    \node[draw, dashed, rounded corners=2pt, thick,
          fit=(d)(e), inner xsep=6pt, inner ysep=4pt] (enc) {};
\end{tikzpicture}}
        \subcaption{Parallel ($\stack$).}
    \end{subfigure}\hfill
    \begin{subfigure}[b]{0.19\textwidth}
        \centering
        \scalebox{0.8}{\begin{tikzpicture}[DP]
    \node[dp={1}{1}] (d) {$d$};
    \node[dp={1}{1}, below=0.6cm of d] (e) {$e$};
    \node[draw, solid, rounded corners=2pt, thick,
          fit=(d)(e), inner xsep=10pt, inner ysep=4pt] (enc) {};
    \coordinate (split) at (enc.west);
    \draw[dpgreen] (split) -- (d_fun1);
    \draw[dpgreen] (split) -- (e_fun1);
    \draw[dpgreen] ($(split)+(-0.5cm,0)$) -- (split)
          node[above, pos=0, dpgreen, fill=white, inner sep=1pt, outer sep=1pt]
          {\footnotesize{$P$}};
    \coordinate (merge) at (enc.east);
    \draw[dpred, dashed] (d_res1) -- (merge);
    \draw[dpred, dashed] (e_res1) -- (merge);
    \draw[dpred, dashed] (merge) -- ($(merge)+(0.5cm,0)$)
          node[above, pos=1, dpred, fill=white, inner sep=1pt, outer sep=1pt]
          {\footnotesize{$Q$}};
    \node[text=purple] at ($(d.south)!0.5!(e.north)$) {$\wedge$};
\end{tikzpicture}}
        \subcaption{Intersection ($\wedge$).}
    \end{subfigure}\hfill
    \begin{subfigure}[b]{0.19\textwidth}
        \centering
        \scalebox{0.8}{\begin{tikzpicture}[DP]
    \node[dp={2}{2}] (f) {$d$};
    \draw[runconn, runame={}] (f_res2){};
    \draw[funconn, funame={}] (f_fun2){};
    \draw[rconn,rcname={},fcname={},feedback=1,loos=3] (f_res1) -- ($(f)+(0,4)$) |- (f_fun1);
\end{tikzpicture}}
        \subcaption{Loop ($\operatorname{Tr}$).}
    \end{subfigure}
    \caption{A \gls{abk:dp} is a monotone relation between posets of \F{functionalities} and \R{resources}~(a), and can be composed via various operations~(b)--(e).}
    \label{fig:dp_def}
    \vspace{-3mm}
\end{figure*}

\begin{definition}[Composition operations for \glspl{abk:dp}]\label{def:dp-interconnections}
    The following operations construct new \glspl{abk:dp} from old:
    
    \emph{Series}: Given \glspl{abk:dp} $\dprba \in \dpOf{\posetP}{\posetQ}$ and $\dprbb \in \dpOf{\posetQ}{\posetR}$, their series connection $\dprba \mthen \dprbb \in \dpOf{\posetP}{\posetR}$ is defined as
    \[
        \{\tup{\poselxP, \poselxRnoRes} \mid
        \exists \poselxQ : \tup{\poselxP, \poselxQ} \in \dprba
        \text{ and }\tup{\poselxQ, \poselxRnoRes} \in \dprbb\}.
    \]
    This models situations where $\dprba$ uses the functionalities provided by $\dprbb$ as its resources.
    
    \emph{Parallel}: For~$\dprba \in \dpOf{\posetP}{\posetQ}$,~$\dprba' \in \dpOf{\posetP'}{\posetQ'}$, their parallel connection~$\dprba \stack \dprba' \in \dpOf{\posetP \posetproduct \posetP'}{\posetQ \posetproduct \posetQ'}$ is
    \[
        \{\tup{\tup{\poselxP,\poselxP'}, \tup{\poselxQ,\poselxQ'}} \mid
        \tup{\poselxP, \poselxQ} \in \dprba
        , \tup{\poselxP', \poselxQ'} \in \dprba' \},
    \]
    representing two non-interacting systems.
    
    \emph{Intersection}: For $\dprba, \dprbb \in \ldpOf{\posetP}{\posetQ}$, their intersection $\dprba \wedge \dprbb \in \dpOf{\posetP}{\posetQ}$ is 
    \[
        \{ \tup{\poselxP, \poselxQ} \mid
        \tup{\poselxP, \poselxQ} \in \dprba,
        \tup{\poselxP, \poselxQ} \in \dprbb
        \}.
    \]
    representing a conjunction of constraints.
    
    \emph{Feedback/Trace}: For $\dprb \in \dpOf{\posetP \posetproduct \posetR}{\posetQ \posetproduct \posetR}$, its trace $\traceOf{\dprb} \in \dpOf{\posetP}{\posetQ}$ is defined as
    \[
        \{ \tup{\poselxP, \poselxQ} \mid
        \exists \poselxRnoRes : \tup{\tup{\poselxP,\poselxRnoRes}, \tup{\poselxQ,\poselxRnoRes}} \in \dprb \}.
    \]
    This models the case where functionalities provided by $\dprb$ are used as its own resources.
\end{definition}

These compositions allow the construction of a \gls{abk:dp} from \glspl{abk:cdp}, such as \cref{fig:gripper-architecture,fig:rover-codesign-architecture} below.

\begin{definition}[\glspl{abk:cdp}]\label{def:cdp}
    We call $\tup{\funPosetF, \resPosetR, \tup{\mathcal{V}, \mathcal{E}}}$ a \gls{abk:cdp}, where $\funPosetF = \Pi_{i = 1}^{n_{\funPosetF}} \funPosetF_i$ and $\resPosetR = \Pi_{j = 1}^{n_{\resPosetR}} \resPosetR_j$ are the system-level functionalities and resources, and $\tup{\mathcal{V}, \mathcal{E}}$ is a multi-graph where:
    \begin{enumerate}[label=(\roman*)]
        \item Each node in $\mathcal{V}$ is a \gls{abk:dp}~$d$ with functionality $\funPosetF_{d} = \Pi_{i = 1}^{n_{d, \funPosetF}} \funPosetF_{d,i}$ and resource $\resPosetR_d = \Pi_{j = 1}^{n_{d, \resPosetR}} \resPosetR_{d,j}$.
        \item Each edge is a pair $\tup{\tup{d_1, i_1}, \tup{d_2, j_2}}$ with the corresponding \glspl{abk:poset} $\resPosetR_{d_1,i_1}$ and $\funPosetF_{d_2, j_2}$ being the same, representing the inequality $\posx_{\resPosetR_{d_1, i_1}} \posetleq \posx_{\funPosetF{d_2, j_2}}$ (the $i_1$-th resource required by $d_1 \in \mathcal{V}$ is at most the $j_2$-th functionality provided by $d_2 \in \mathcal{V}$).
        \item The system-level functionality/resource is the product of the unconnected functionalities/resources.
    \end{enumerate}
\end{definition}

With these modeling primitives in hand, we can ask for optimal solutions to \glspl{abk:dp}, formalized as \emph{queries}.

\begin{definition}[Querying \glspl{abk:dp}]\label{def:queries-dp}
    Given a \gls{abk:dp}~$\dprb \in \dpOf{\funPosetF}{\resPosetR}$, we define two basic queries:
    \begin{enumerate}
        \item \emph{Fix \FI{functionalities} minimize \RI{resources}}: for a fixed $\poselxF \in \funPosetF$, return the upper set of resources $\poselxR \in \resPosetR$ that make $\tup{\poselxF, \poselxR}$ feasible with respect to $\dprb$. We can view this query as a monotone map $q \colon \funPosetF \to \USetOf{\resPosetR}$.
        \item \emph{Fix \RI{resources} maximize \FI{functionalities}}: For a fixed $\poselxR \in \resPosetR$, return the upper set of functionalities $\poselxF \in \funPosetF$ that make $\tup{\poselxF, \poselxR}$ feasible with respect to $\dprb$. We can view this query as a monotone map $q' \colon \resPosetR \to \USetOf{\funPosetF\op}$.
    \end{enumerate}
    In applications we are typically interested in the \emph{minimal} resources in $q(\poselxF)$ and the \emph{maximal} functionalities in $q'(\poselxR)$.
\end{definition}

When $\funPosetF$ and $\resPosetR$ are general \glspl{abk:poset}, there might be a \emph{Pareto front} of optimal values and solutions to the queries, instead of a unique one.
Using the compositional structure, one can derive efficient algorithms to calculate the query results for complex \glspl{abk:dp} defined by a multi-graph of sub-systems~\cite{zardini2023co}.

\subsection{Linear co-design problems and queries}\label{sec:ldp}

We next focus on \glspl{abk:dp} endowed with additional geometric structure.
Based on \cref{def:dp}, we introduce \glspl{abk:ldp}.

\begin{definition}[Euclidean poset and sub-poset]\label{def:euclidean-poset}\label{def:euclidean-subposet}
For $n \in \mathbb{N}$, the \emph{Euclidean poset} $\tup{\mathbb{R}_{\geq 0}^n, \preceq}$ is the set $\mathbb{R}_{\geq 0}^n \defeq \{ x \in \mathbb{R}^n \mid x_i \geq 0,\; i=1,\dots,n \}$ equipped with the componentwise order $x \preceq y \Leftrightarrow x_i \leq y_i$ for all $i$.
A subset $S \subseteq \mathbb{R}_{\geq 0}^n$ with the restricted order is called a \emph{Euclidean sub-poset}.
\end{definition}

\begin{definition}[\gls{abk:ldp}]\label{def:ldp}
Let~$\funPosetF \subseteq \langle \mathbb{R}_{\geq 0}^{n_\funPosetF}, \preceq\rangle$,~$\resPosetR \subseteq \langle\mathbb{R}_{\geq 0}^{n_\resPosetR}, \preceq\rangle$ be Euclidean sub-posets.
A \emph{\gls{abk:ldp}} is a \gls{abk:dp} $\dprb \in \dpOf{\funPosetF}{\resPosetR}$ for which the set of all feasible pairs $\tup{\poselxF, \poselxR}$ forms a polyhedron in the underlying Euclidean space.
Equivalently, there exist~$A_{\funPosetF} \in \mathbb{R}^{m \times n_{\funPosetF}}$,~$A_{\resPosetR} \in \mathbb{R}^{m \times n_{\resPosetR}}$,~$b \in \mathbb{R}^m$ such that
    \begin{equation}\label{eq:ldp-representation}
        \ldprb = \left\{ \tup{\poselxF, \poselxR} \in \funPosetF\op \times \resPosetR \;\Big|\; A_{\funPosetF} \poselxF + A_{\resPosetR} \poselxR \geq b \right\}.
    \end{equation}
    We denote the set of all such \glspl{abk:ldp} by $\ldpOf{\funPosetF}{\resPosetR}$. For conciseness, we can write the condition as $A_{\ldprb} x_{\ldprb} \geq b$, where $A_{\ldprb}=\left[A_{\funPosetF}\;\middle|\; A_{\resPosetR}\right]$ and $x_{\ldprb} = \left[\poselxF^{\top}, \poselxR^{\top} \right]^{\top}$.
\end{definition}

\begin{remark}
Since $\funPosetF$ and $\funPosetF\op$ share the same underlying set, \cref{eq:ldp-representation} is written over $\funPosetF \times \resPosetR$ and the order reversal appears only in the requirement that $\dprb$ be an upper set in $\funPosetF\op \times \resPosetR$.
Not every polyhedron in $\funPosetF \times \resPosetR$ therefore defines an \gls{abk:ldp}: we additionally require the monotonicity encoded by \cref{def:dp}.
\end{remark}

Following the query formalism in \cref{def:queries-dp} (see~\cite{zardini2023co}), the \emph{fix \FI{functionalities} minimize \RI{resources}} query asks, for a fixed $\poselxF \in \funPosetF$, for the set of \emph{minimal} (Pareto) resources $\poselxR$ such that $\tup{\poselxF, \poselxR}$ is feasible; the \emph{fix \RI{resources} maximize \FI{functionalities}} query asks, for a fixed $\poselxR \in \resPosetR$, for the set of \emph{maximal} (Pareto) functionalities $\poselxF$ such that $\tup{\poselxF, \poselxR}$ is feasible.
For \glspl{abk:ldp}, these become \glspl{abk:molp}.

\begin{definition}[\gls{abk:molp}]\label{def:molp}
    A \gls{abk:molp} in standard form is the optimization problem
    \begin{equation*}
        \mathrm{Min}_{\preceq} \quad C x \quad \text{subject to} \quad A x \geq b,\quad x \geq 0,
    \end{equation*}
    where $C \in \mathbb{R}^{q \times n}$, $A \in \mathbb{R}^{m \times n}$, $b \in \mathbb{R}^m$, the variable $x \in \mathbb{R}^n$, and $\preceq$ is the componentwise partial order on $\mathbb{R}^q$.
    We call the set of feasible points $X := \{ x \geq 0 : A x \geq b \}$ the \emph{feasible set} of the \gls{abk:molp}.
\end{definition}

\begin{definition}[Efficient point]\label{def:efficient-point}
    A feasible point $x^* \in X$ is called \emph{efficient} (or \emph{Pareto optimal}) for the \gls{abk:molp} in \cref{def:molp} if there is no feasible $x \in X$ with $Cx \preceq Cx^*$ and $Cx \neq Cx^*$.
    Equivalently, $Cx^* \notin CX + \mathbb{R}_{\ge 0}^q \setminus \{0\}$, where $CX := \{Cx \mid x\in X\}$.
    The set of all efficient points is denoted $X_E$, and its image $CX_E$ is the \emph{efficient frontier} (or \emph{Pareto front}).
\end{definition}

\begin{definition}[Upper image of a \gls{abk:molp}]\label{def:upper-image-molp}
    For the \gls{abk:molp} in \cref{def:molp}, the \emph{upper image} is~$\posetP \defeq \operatorname{cl}\big(CX + \mathbb{R}_{\ge 0}^q\big)$.
    In the polyhedral linear case here, $\posetP$ is a polyhedron whose lower boundary coincides with the efficient frontier~$CX_E$.
\end{definition}

\begin{remark}[Vector linear program]
    A \emph{vector linear program} (VLP) generalizes \gls{abk:molp} by replacing the componentwise ordering cone $\mathbb{R}_{\ge 0}^q$ with an arbitrary polyhedral cone $K \subseteq \mathbb{R}^q$~\cite{lohne_vector_2017}:
     \[
        \mathrm{Min}_{K}\, Cx \quad \text{subject to} \quad Ax \ge b,\quad x \ge 0,
    \]
    where $y \preceq_K z \Leftrightarrow z - y \in K$.
    When $K = \mathbb{R}_{\ge 0}^q$, the VLP reduces to the standard \gls{abk:molp}.
    The Bensolve tools and solver~\cite{ciripoi2019calculus,lohne_vector_2017} computes $\mathbf{V}$- and $\mathbf{H}$-representations of the upper image $\posetP$ for VLPs.
\end{remark}

\begin{lemma}[Queries of \gls{abk:ldp} are equivalent to \gls{abk:molp}]\label{lem:ldp-queries-molp}
Consider an \gls{abk:ldp} $\dprb \in \ldpOf{\funPosetF}{\resPosetR}$ with feasible set
    $\ldprb = \{ \tup{\poselxF, \poselxR} \in \funPosetF \times \resPosetR : A_{\funPosetF} \poselxF + A_{\resPosetR} \poselxR \geq b \}$.
    \begin{enumerate}[label=(\roman*)]
        \item \emph{Fix \FI{functionalities} minimize \RI{resources}}: For fixed $\barposelxF \in \funPosetF$, the set of minimal resources that make $\tup{\barposelxF, \poselxR}$ feasible coincides with the set of efficient solutions of the \gls{abk:molp}
              \[
                  \mathrm{Min}_{\preceq} \; \poselxR \quad \text{s.t.} \; A_{\resPosetR} \poselxR \geq b - A_{\funPosetF} \barposelxF,\quad \poselxR \geq 0.
              \]
        \item \emph{Fix \RI{resources} maximize \FI{functionalities}}: For fixed $\barposelxR \in \resPosetR$, the set of maximal functionalities that make $\tup{\poselxF, \barposelxR}$ feasible coincides with the set of efficient solutions of the \gls{abk:molp}
              \[
                  \mathrm{Max}_{\preceq} \; \poselxF \quad \text{s.t.} \; A_{\funPosetF} \poselxF \geq b - A_{\resPosetR} \barposelxR,\quad \poselxF \geq 0,
              \]
              which is equivalent to $\mathrm{Min}_{\preceq} \; (-\poselxF)$ subject to the same constraints (efficient points are unchanged).
        \item Conversely, every \gls{abk:molp} of the form $\mathrm{Min}_{\preceq}\, x$ subject to $A x \geq b$, $x \geq 0$ is the fix-\FI{functionalities}-minimize-\RI{resources} query of the \gls{abk:ldp} with functionality space $\funPosetF = \{0\} \subseteq \mathbb{R}_{\geq 0}$ (singleton), resource space $\resPosetR = \mathbb{R}_{\geq 0}^n$, and feasible set $\ldprb = \{ \tup{0, x} : A x \geq b,\; x \geq 0 \}$ (i.e., $A_{\funPosetF} = 0$, $A_{\resPosetR} = A$).
    \end{enumerate}
\end{lemma}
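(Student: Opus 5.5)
The plan is to unfold \cref{def:queries-dp}, \cref{def:molp} and \cref{def:efficient-point} and to show that, once $\barposelxF$ (resp.\ $\barposelxR$) is fixed, the query's feasible slice is literally the MOLP feasible set. Its minimal (resp.\ maximal) elements are then exactly the efficient points, because the MOLP objective is the identity map ($C=I$) or $C=-I$.

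For (i), fix $\barposelxF\in\funPosetF$. The query $q(\barposelxF)$ is the slice $S=\{\poselxR\in\resPosetR : \tup{\barposelxF,\poselxR}\in\ldprb\}$, which by \cref{eq:ldp-representation} equals $\{\poselxR\in\resPosetR : A_{\resPosetR}\poselxR\ge b-A_{\funPosetF}\barposelxF\}$. The MOLP has feasible set $X=\{\poselxR\ge 0 : A_{\resPosetR}\poselxR\ge b-A_{\funPosetF}\barposelxF\}$ and $C=I$.

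I will identify $S$ with $X$. Here the sign constraint $\poselxR\ge 0$ encodes $\poselxR\in\mathbb{R}^{n_\resPosetR}_{\ge 0}$. If $\resPosetR$ is a proper Euclidean sub-poset, I will read the statement as taking $\resPosetR$ as the ambient orthant, or absorb the polyhedral description of $\resPosetR$ into the rows of $A_{\resPosetR}$. I expect this bookkeeping to be the only genuine obstacle. I would state it explicitly as a convention: the constraints of \cref{eq:ldp-representation} already include membership in $\funPosetF\times\resPosetR$.

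With $S=X$, a point $x^*\in X$ is minimal in $S$ under $\preceq$ exactly when no $x\in X$ satisfies $x\preceq x^*$ with $x\neq x^*$. With $C=I$, this is verbatim \cref{def:efficient-point}. So the two sets coincide, and the Pareto front is $X_E$ itself.

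For (ii), the argument is symmetric. Fix $\barposelxR$; the slice is $\{\poselxF\ge 0 : A_{\funPosetF}\poselxF\ge b-A_{\resPosetR}\barposelxR\}$. Maximality under $\preceq$ in $\funPosetF$ means there is no $x\succeq x^*$ with $x\ne x^*$, which is equivalent to $-x\preceq -x^*$ with $-x\ne -x^*$. This is efficiency for $C=-I$. The maximization is over $\funPosetF$, corresponding to minimality in $\funPosetF\op$ as in the upper-set query $q'$. I only need to remark that the order reversal merely flips the sign of the objective while the feasible set is unchanged.

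For (iii), take $\funPosetF=\{0\}$, $A_{\funPosetF}=0$ and $A_{\resPosetR}=A$. I must check that $\ldprb=\{\tup{0,x} : Ax\ge b,\ x\ge 0\}$ is a DP, i.e.\ an upper set in $\funPosetF\op\times\resPosetR$. The functionality coordinate is trivial, so the resource slice must be upward closed in $\mathbb{R}^n_{\ge 0}$.

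This need not hold for arbitrary $A$, for instance when $A$ has negative entries. I would handle it by either restricting to the case $A\ge 0$ entrywise, for which upward closure is immediate, or by replacing the feasible set with its upper closure $X+\mathbb{R}^n_{\ge 0}$. The second option is itself a polyhedron, obtained by projecting $\{(x,y) : Ax\ge b,\ x\ge 0,\ y\ge x\}$ onto $y$ via Fourier--Motzkin. It has the same minimal elements, since $\min(X+\mathbb{R}^n_{\ge 0})=\min X$.

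Finally, applying (i) with $\barposelxF=0$ returns exactly the given MOLP, which closes the equivalence.
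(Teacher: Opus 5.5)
For (i) and (ii) your argument is the paper's: you identify the slice at fixed $\barposelxF$ (resp.\ $\barposelxR$) with the MOLP feasible set, and observe that $\preceq$-minimality with $C=I$ (resp.\ $C=-I$) is verbatim efficiency. Your convention that the $\mathbf{H}$-representation already encodes membership in $\funPosetF\times\resPosetR$ is what makes the sign constraint $\poselxR\ge 0$ interchangeable with $\poselxR\in\resPosetR$. The paper's proof passes over this silently and only states the convention later, in the global polyhedral encoding lemma (``including the port-domain constraints'').

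In (iii) you are more careful than the paper. Its proof declares $\{\langle 0,x\rangle : Ax\ge b\}$ an LDP merely because it is a polyhedron. It never checks the upper-set condition that its own remark after the LDP definition says is needed. Your objection is right and can be sharpened: for nonempty $X=\{x\ge 0 : Ax\ge b\}$, upward closure in $\mathbb{R}^n_{\ge 0}$ holds if and only if $A\ge 0$ entrywise. The reason is that $a_{ij}<0$ makes row $i$ fail along $x+te_j$ as $t\to\infty$. So (iii) with $A_{\resPosetR}=A$ is literally true only for $A\ge 0$ (or infeasible $X$); otherwise the upper closure $X+\mathbb{R}^n_{\ge 0}$ is needed.

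In that branch the LDP no longer has $A_{\resPosetR}=A$, and (i) at $\barposelxF=0$ returns a different MOLP. The correspondence therefore holds only at the level of efficient sets, and you should say so explicitly. Also include the short proof of $\min(X+\mathbb{R}^n_{\ge 0})=\min X$. If $u=x+k$ is minimal, then $x\preceq u$ forces $u=x\in X$, and $u$ is then minimal in $X\subseteq X+\mathbb{R}^n_{\ge 0}$. Conversely, suppose $x\in\min X$ and $x'+k\preceq x$ with $x'+k\neq x$. Then $x'\preceq x$ and $x'\neq x$, since $x'=x$ would force $k=0$; this contradicts minimality.
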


\begin{proof}
We prove the three statements in order.
    \textbf{(i)} Fix $\barposelxF \in \funPosetF$ and consider the associated feasible resource set $R(\barposelxF) \defeq \setWithArg{\poselxR \in \resPosetR}{\tup{\barposelxF, \poselxR} \in \ldprb}$.
    By the polyhedral representation of $\ldprb$, a pair $\tup{\barposelxF, \poselxR}$ is feasible if and only if
    \begin{equation*}
        A_{\resPosetR} \poselxR \;\geq\; b - A_{\funPosetF} \barposelxF, \quad \poselxR \in \resPosetR \subseteq \mathbb{R}_{\geq 0}^{n_{\resPosetR}}.
    \end{equation*}
    Hence $R(\barposelxF)$ coincides with the feasible set of the vector optimization problem
    \[
        \mathrm{Min}_{\preceq} \; \poselxR \quad \text{s.t.} \; A_{\resPosetR} \poselxR \geq b - A_{\funPosetF} \barposelxF,\quad \poselxR \geq 0,
    \]
    where $\preceq$ is the componentwise order on $\mathbb{R}^{n_{\resPosetR}}$.
    By \cref{def:queries-dp}, the query answer at $\barposelxF$ is precisely the set of $\preceq$–minimal elements of $R(\barposelxF)$.
    These are exactly the efficient (Pareto–optimal) solutions of the above \gls{abk:molp} with objective matrix $C = I$, establishing~(i).

    \textbf{(ii)} The statement for the ``fix resources, maximize functionalities'' query is proved by an order–duality argument entirely analogous to (i).
    For fixed $\barposelxR$ the feasible functionality set is the polyhedron
    \(
    F(\barposelxR) = \setWithArg{\poselxF \in \funPosetF}{A_{\funPosetF} \poselxF \geq b - A_{\resPosetR} \barposelxR,\ \poselxF \ge 0}.
    \)
    Taking $\preceq$–maximal elements of $F(\barposelxR)$ is equivalent to taking $\preceq$–minimal elements of $-F(\barposelxR)$, so exactly as in (i) one obtains that the query result coincides with the efficient set of the \gls{abk:molp} in~(ii).

    \textbf{(iii)} Let a \gls{abk:molp} of the form
    \begin{equation*}
        \mathrm{Min}_{\preceq}\, x \quad \text{subject to} \quad A x \geq b, \quad x \geq 0
    \end{equation*}
    be given, where $x \in \mathbb{R}_{\geq 0}^n$ and $\preceq$ is the componentwise order.
    Define an \gls{abk:ldp} with functionality space $\funPosetF \defeq \{0\} \subseteq \mathbb{R}_{\geq 0}$, resource space $\resPosetR \defeq \mathbb{R}_{\geq 0}^n$, and feasible set
    \begin{equation*}
        \ldprb \defeq \setWithArg{\tup{0, x} \in \funPosetF \times \resPosetR}{A x \geq b}.
    \end{equation*}
    This is an \gls{abk:ldp}, since $\ldprb$ is a polyhedron in $\mathbb{R}^1 \times \mathbb{R}^n$ and admits the representation
    \begin{equation*}
        \ldprb = \setWithArg{\tup{\poselxF, \poselxR}}{A_{\funPosetF} \poselxF + A_{\resPosetR} \poselxR \geq b},
    \end{equation*}
    with $A_{\funPosetF} = 0$ and $A_{\resPosetR} = A$.
    The fix-\FI{functionalities}-minimize-\RI{resources} query evaluated at the unique functionality $\barposelxF = 0$ asks for the set of $\preceq$–minimal $\poselxR$ such that $\tup{0,\poselxR} \in \ldprb$, that is,~$\setWithArg{\poselxR \in \mathbb{R}_{\geq 0}^n}{A \poselxR \geq b}$ together with its $\preceq$–minimal elements.
    By definition, these minimal elements are exactly the efficient solutions of the original \gls{abk:molp}, which proves~(iii).
\end{proof}
We are now ready to define \glspl{abk:lcdp}.
\begin{definition}[\Gls{abk:lcdp}]\label{def:lcdp}
Let $\tup{\funPosetF,\resPosetR,\langle \mathcal{V},\mathcal{E}\rangle}$ be a \gls{abk:cdp} in the sense of~\cref{def:cdp}.
We call~$\tup{\funPosetF,\resPosetR,\tup{ \mathcal{V},\mathcal{E}}}$ a \emph{linear co-design problem (LCDP)} if the following conditions hold:
    \begin{enumerate}[label=(\roman*), leftmargin=*]
        \item (\emph{Linear component design problems})
              For each node $d \in \mathcal{V}$, the associated DP~$d = \langle \funPosetF_d,\resPosetR_d,\ldprb_d\rangle$
              has functionality and resource posets~$\funPosetF_d \subseteq \langle\mathbb{R}_{\ge 0}^{\,n_{\funPosetF_d}},\preceq\rangle$,~$\resPosetR_d \subseteq \langle\mathbb{R}_{\ge 0}^{\,n_{\resPosetR_d}},\preceq\rangle,$ and its feasible set $\ldprb_d \subseteq \funPosetF_d \times \resPosetR_d$ is a polyhedron (there exist~$A_{\funPosetF_d} \in \mathbb{R}^{m_d \times n_{\funPosetF_d}}$,~$A_{\resPosetR_d} \in \mathbb{R}^{m_d \times n_{\resPosetR_d}}$,~$b_d \in \mathbb{R}^{m_d}$ satisfying \cref{eq:ldp-representation}).
        \item (\emph{Linear interconnection})
              Each edge~$e \in \mathcal{E}$ is a pair $e = \tup{\tup{d_1, i_1}, \tup{d_2, j_2}}$ that links the $i_1$-th resource component of node $d_1 \in \mathcal{V}$ to the $j_2$-th functionality component of node $d_2 \in \mathcal{V}$, with the two components identified by inequality of the corresponding coordinate projections, i.e.\ $\pi_{i_1}(\resPosetR_{d_1}) \leq \pi_{j_2}(\funPosetF_{d_2})$.
    \end{enumerate}
    In other words, an \gls{abk:lcdp} is a \gls{abk:cdp} whose component design problems are all \glspl{abk:ldp} and interconnections are element-wise inequalities.
    Consequently, the system-level functionality/resource are Euclidean \glspl{abk:poset}.
\end{definition}

The following lemma shows \glspl{abk:ldp} are closed under the compositions in \cref{def:dp-interconnections}.

\begin{lemma}[Closure of \glspl{abk:ldp} under composition]\label{lem:ldp-composition}
    Let $\posetP,\posetQ,\posetR,\posetP',\posetQ'$ be Euclidean sub-posets (\cref{def:euclidean-subposet}).
    Then the following hold:
    \begin{enumerate}[label=(\alph*), leftmargin=*]
        \item \emph{Series.}
        For any $\dprba \in \ldpOf{\posetP}{\posetQ}$ and $\dprbb \in \ldpOf{\posetQ}{\posetR}$, their series composition $\dprba \mthen \dprbb$ satisfies $\dprba \mthen \dprbb \in \ldpOf{\posetP}{\posetR}$.
        \item \emph{Parallel.}
        For any $\dprba \in \ldpOf{\posetP}{\posetQ}$ and $\dprba' \in \ldpOf{\posetP'}{\posetQ'}$, their parallel composition $\dprba \stack \dprba'$ satisfies $\dprba \stack \dprba' \in \ldpOf{\posetP \posetproduct \posetP'}{\posetQ \posetproduct \posetQ'}$.
        \item \emph{Intersection.}
        For any $\dprba, \dprbb \in \ldpOf{\posetP}{\posetQ}$, their intersection $\dprba \wedge \dprbb$ satisfies $\dprba \wedge \dprbb \in \ldpOf{\posetP}{\posetQ}$.
        \item \emph{Feedback.}
        For any $\dprb \in \ldpOf{\posetP \posetproduct \posetR}{\posetQ \posetproduct \posetR}$, its trace $\traceOf{\dprb}$ satisfies $\traceOf{\dprb} \in \ldpOf{\posetP}{\posetQ}$.
    \end{enumerate}
\end{lemma}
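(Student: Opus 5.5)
For each composition we check two things: the feasible set is a polyhedron, and it is an upper set in $\funPosetF\op\times\resPosetR$. Upper-set closure is inherited from general co-design theory. The series composition, parallel product, intersection and trace of upper sets are upper sets, so each composite lies in the appropriate $\dpOf{\cdot}{\cdot}$. We verify this directly from \cref{def:dp-interconnections}. For series, if $\tup{\poselxP,\poselxQ}\in\dprba$ and $\tup{\poselxQ,\poselxRnoRes}\in\dprbb$, then weakening $\poselxP$ keeps the first pair feasible and strengthening $\poselxRnoRes$ keeps the second. For the trace, the same witness $\poselxRnoRes$ works after weakening $\poselxP$ or strengthening $\poselxQ$. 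The remaining work is polyhedrality, which we handle case by case using the H-representations \cref{eq:ldp-representation}.

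The parallel and intersection cases are direct. \textbf{(b)} Write $\dprba=\{A_{\posetP}\poselxP+A_{\posetQ}\poselxQ\ge b\}$ and $\dprba'=\{A'_{\posetP'}\poselxPprime+A'_{\posetQ'}\poselxQprime\ge b'\}$. Stacking gives the block-diagonal system
\[
\begin{bmatrix}A_{\posetP}&0\\0&A'_{\posetP'}\end{bmatrix}\begin{bmatrix}\poselxP\\ \poselxPprime\end{bmatrix}+\begin{bmatrix}A_{\posetQ}&0\\0&A'_{\posetQ'}\end{bmatrix}\begin{bmatrix}\poselxQ\\ \poselxQprime\end{bmatrix}\ge\begin{bmatrix}b\\ b'\end{bmatrix},
\]
which is a polyhedron in the product space. \textbf{(c)} Concatenating the two inequality systems row-wise describes $\dprba\wedge\dprbb$ in the form \cref{eq:ldp-representation}. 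In both cases the sign constraints from the Euclidean sub-posets are simply carried along as rows.

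Series and feedback both involve an existential quantifier, and this is the main step. \textbf{(a)} Consider the lifted set
\[
S=\{(\poselxP,\poselxQ,\poselxRnoRes): A_{\posetP}\poselxP+A_{\posetQ}\poselxQ\ge b,\ B_{\posetQ}\poselxQ+B_{\posetR}\poselxRnoRes\ge c,\ \poselxQ\ge 0\}.
\]
It is a polyhedron, and $\dprba\mthen\dprbb$ is its image under the linear projection $(\poselxP,\poselxQ,\poselxRnoRes)\mapsto(\poselxP,\poselxRnoRes)$. A linear image of a polyhedron is a polyhedron. We justify this by Fourier--Motzkin elimination of the $\poselxQ$ coordinates one at a time, or via Minkowski--Weyl: the projection maps $\operatorname{conv}\{v^i\}+\operatorname{cone}\{r^j\}$ to $\operatorname{conv}\{\pi v^i\}+\operatorname{cone}\{\pi r^j\}$. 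Either argument yields matrices $\tilde A_{\posetP},\tilde A_{\posetR},\tilde b$ representing the composite as in \cref{eq:ldp-representation}. \textbf{(d)} The trace is handled the same way. Its feasible set is the projection onto $(\poselxP,\poselxQ)$ of
\[
\{(\poselxP,\poselxQ,\poselxRnoRes): A_1\poselxP+A_2\poselxRnoRes+A_3\poselxQ+A_4\poselxRnoRes\ge b\},
\]
that is, of the original polyhedron intersected with the linear subspace that identifies the functionality copy of $\poselxRnoRes$ with the resource copy. This intersection is again a polyhedron, and so is its projection.

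The projection argument is the crux. The points requiring care are: whether the quantified $\poselxQ$ ranges over all of $\mathbb{R}_{\ge0}$ or over a general sub-poset $\posetQ$, and whether the sub-posets $\posetP,\posetR$ are themselves polyhedral. We adopt the convention implicit in \cref{def:ldp}: the feasible set is the polyhedron itself and membership in the sub-posets is encoded by its rows. Under this convention, projection preserves everything that is needed.
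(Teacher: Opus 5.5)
Your proposal is correct and follows essentially the same route as the paper. Parallel is a Cartesian product, intersection is row-stacking, and series and feedback are handled by lifting to an extended polyhedron and projecting, using closure of polyhedra under linear images; for feedback this means identifying the two copies of the loop variable, i.e., working with $A^{\mathrm{in}}+A^{\mathrm{out}}$. Your explicit check that each composite is again an upper set, and your convention that sub-poset membership is encoded in the inequality rows, are small additions that the paper leaves implicit.
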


\begin{proof}
By \cref{def:ldp}, for Euclidean sub-posets~$\funPosetF,\resPosetR$ a \gls{abk:dp}~$\dprb \in \dpOf{\funPosetF}{\resPosetR}$ is an \gls{abk:ldp} if and only if its feasible set is a polyhedron in the underlying Euclidean space.
For each composition we give the feasible set of the composite \gls{abk:dp}, show that it is a polyhedron, and conclude that the composite is an \gls{abk:ldp}.
Throughout, we use that polyhedra are closed under Cartesian products and linear images.

\textbf{Series.}
Let $\dprba \in \ldpOf{\posetP}{\posetQ}$ and $\dprbb \in \ldpOf{\posetQ}{\posetR}$ be \glspl{abk:ldp} with feasible sets
    \begin{equation*}
    \begin{aligned}
        \ldprb_a
        &= \setWithArg{\tup{\poselxP,\poselxQ}}{A_{\posetP} \poselxP + A_{\posetQ} \poselxQ \ge b_a},\\
        \ldprb_b
        &= \setWithArg{\tup{\poselxQ,\poselxRnoRes}}{B_{\posetQ} \poselxQ + B_{\posetR} \poselxRnoRes \ge b_b},
    \end{aligned}
    \end{equation*}
    for suitable matrices $\tup{A_{\posetP},A_{\posetQ},b_a}$ and $\tup{B_{\posetQ},B_{\posetR},b_b}$.
    By \cref{def:dp-interconnections}, the series connection $\dprba \mthen \dprbb \in \dpOf{\posetP}{\posetR}$ has feasible set
    \begin{equation}\label{eq:series-feasible}
        \begin{aligned}
            \ldprb_{\mathrm{series}}
            = \setWithArg{\tup{\poselxP,\poselxRnoRes}}{\exists\,\poselxQ:\;
             & A_{\posetP} \poselxP + A_{\posetQ} \poselxQ \ge b_a,\;      \\
             & B_{\posetQ} \poselxQ + B_{\posetR} \poselxRnoRes \ge b_b }.
        \end{aligned}
    \end{equation}
    In the extended variable space
    \(
        x \defeq [\poselxP^\top,\poselxQ^\top,\poselxRnoRes^\top]^\top,
    \)
    the constraints in \eqref{eq:series-feasible} can be written as
    \[
        \begin{bmatrix}
            A_{\posetP} & A_{\posetQ} & 0 \\
            0           & B_{\posetQ} & B_{\posetR}
        \end{bmatrix}
        x
        \;\ge\;
        \begin{bmatrix}
            b_a \\[1mm] b_b
        \end{bmatrix},
    \]
    which defines a polyhedron in $\tup{\poselxP,\poselxQ,\poselxRnoRes}$.
    The feasible set $\ldprb_{\mathrm{series}}$ is its projection onto the $\tup{\poselxP,\poselxRnoRes}$ coordinates, hence is again a polyhedron.
    Since the external functionality and resource posets are still $\posetP$ and $\posetR$, which are Euclidean sub-posets by assumption, $\dprba \mthen \dprbb$ is an \gls{abk:ldp}.

\textbf{Parallel.}
    Let $\dprba \in \ldpOf{\posetP}{\posetQ}$ and $\dprba' \in \ldpOf{\posetP'}{\posetQ'}$ be \glspl{abk:ldp} with feasible sets
    \begin{equation*}
    \begin{aligned}
        \ldprb_a
        &= \setWithArg{\tup{\poselxP,\poselxQ}}{A_{\posetP} \poselxP + A_{\posetQ} \poselxQ \ge b_a},\\
        \ldprb_a'
        &= \setWithArg{\tup{\poselxPprime,\poselxQprime}}{A_{\posetPprime} \poselxPprime + A_{\posetQprime} \poselxQprime \ge b_a'}.
    \end{aligned}
    \end{equation*}
    By \cref{def:dp-interconnections}, their parallel connection $\dprba \stack \dprba' \in \dpOf{\posetP \posetproduct \posetPprime}{\posetQ \posetproduct \posetQprime}$ has feasible set
    \begin{equation}\label{eq:parallel-feasible}
    \begin{aligned}
        \ldprb_{\mathrm{par}}
        &=
        \Bigl\{\tup{\tup{\poselxP,\poselxPprime}, \tup{\poselxQ,\poselxQprime}}
        \,\Big|\,\\
        &\tup{\poselxP,\poselxQ} \in \ldprb_a,\;
        \tup{\poselxPprime,\poselxQprime} \in \ldprb_a'\Bigr\}.
        \end{aligned}
    \end{equation}
    The right-hand side is the Cartesian product $\ldprb_a \times \ldprb_a'$ of two polyhedra in disjoint variables, hence a polyhedron.
    Moreover, $\posetP \times \posetP'$ and $\posetQ \times \posetQ'$ are Euclidean sub-posets (products of subsets of Euclidean spaces with componentwise order), so $\dprba \stack \dprba'$ is an \gls{abk:ldp}.

\textbf{Intersection.}
Let $\dprba, \dprbb \in \ldpOf{\posetP}{\posetQ}$ be \glspl{abk:ldp} with feasible sets
    \begin{equation*}
    \begin{aligned}
        \ldprb_a
        &= \setWithArg{\tup{\poselxP,\poselxQ}}{A_{\posetP} \poselxP + A_{\posetQ} \poselxQ \ge b_a},\\
        \ldprb_b
        &= \setWithArg{\tup{\poselxP,\poselxQ}}{B_{\posetP} \poselxP + B_{\posetQ} \poselxQ \ge b_b},
    \end{aligned}
    \end{equation*}
    for suitable $\tup{A_{\posetP},A_{\posetQ},b_a}$ and $\tup{B_{\posetP},B_{\posetQ},b_b}$. By \cref{def:dp-interconnections}, the intersection composition $\dprba \wedge \dprbb \in \dpOf{\posetP}{\posetQ}$ has feasible set
    \begin{equation}\label{eq:intersection-feasible}
    \begin{aligned}
        \ldprb_{\mathrm{int}}
        = \setWithArg{\tup{\poselxP,\poselxQ}}{
            &A_{\posetP} \poselxP + A_{\posetQ} \poselxQ \ge b_a, \\
            &B_{\posetP} \poselxP + B_{\posetQ} \poselxQ \ge b_b
        }.
    \end{aligned}
    \end{equation}
    The constraints in \cref{eq:intersection-feasible} can be written as a single system of linear inequalities by stacking the two descriptions:
    \[
        \begin{bmatrix}
            A_{\posetP} & A_{\posetQ} \\
            B_{\posetP} & B_{\posetQ}
        \end{bmatrix}
        \begin{bmatrix}
            \poselxP \\[1mm] \poselxQ
        \end{bmatrix}
        \;\ge\;
        \begin{bmatrix}
            b_a \\[1mm] b_b
        \end{bmatrix},
    \]
    which defines a polyhedron directly in $\tup{\poselxP,\poselxQ}$. 
    Since $\posetP$ and $\posetQ$ are Euclidean sub-posets, $\dprba \wedge \dprbb$ is an \gls{abk:ldp}.
    
\textbf{Feedback.}
Let $\dprb \in \ldpOf{\posetP \posetproduct \posetR}{\posetQ \posetproduct \posetR}$ be an \gls{abk:ldp} with feasible set
    \begin{equation*}
        \begin{aligned}            
        \ldprb
        =&
        \Bigl\{
            \tup{\tup{\poselxP,\poselxRnoRes^{\mathrm{in}}},\tup{\poselxQ,\poselxRnoRes^{\mathrm{out}}}}
            \,\Bigm|\,\\
            &A_{\posetP} \poselxP
            + A_{\posetQ} \poselxQ
            + A_{\posetR}^{\mathrm{in}} \poselxRnoRes^{\mathrm{in}}
            + A_{\posetR}^{\mathrm{out}} \poselxRnoRes^{\mathrm{out}}
            \ge b
        \Bigr\}.
        \end{aligned}
    \end{equation*}
    By \cref{def:dp-interconnections},~$\traceOf{\dprb} \in \dpOf{\posetP}{\posetQ}$ has feasible set
    \begin{equation}\label{eq:feedback-feasible}
        \ldprb_{\mathrm{fb}}
        =
        \setWithArg{\tup{\poselxP,\poselxQ}}{\exists\,\poselxRnoRes:\;
            \tup{\tup{\poselxP,\poselxRnoRes),(\poselxQ,\poselxRnoRes}} \in \ldprb }.
    \end{equation}
    Equivalently, $\tup{\poselxP,\poselxQ} \in \ldprb_{\mathrm{fb}}$ if and only if there exists $\poselxRnoRes$ such that~$\bigl(A_{\posetR}^{\mathrm{in}} + A_{\posetR}^{\mathrm{out}}\bigr)\poselxRnoRes
        + A_{\posetP}\poselxP
        + A_{\posetQ}\poselxQ
        \;\ge\; b.$
    The set of triples $\tup{\poselxP,\poselxQ,\poselxRnoRes}$ satisfying this inequality is a polyhedron in the extended space; $\ldprb_{\mathrm{fb}}$ is its projection onto $\tup{\poselxP,\poselxQ}$, hence a polyhedron.
    Since $\posetP$ and $\posetQ$ remain Euclidean sub-posets, $\traceOf{\dprb}$ is an \gls{abk:ldp}.
\end{proof}

We now show that every \gls{abk:lcdp} induces an \gls{abk:ldp} at the system level, so that its feasible set is a polyhedron and queries can be cast as \gls{abk:molp}.

\begin{theorem}[An LCDP induces an LDP]\label{thm:lcdp-is-ldp}
    Let $\tup{\funPosetF,\resPosetR,\langle \mathcal{V},\mathcal{E}\rangle}$ be an \gls{abk:lcdp} in the sense of \cref{def:lcdp}.
    The \gls{abk:cdp} obtained by composing the component \glspl{abk:dp} along the multigraph $\langle \mathcal{V},\mathcal{E}\rangle$ is an \gls{abk:ldp} in $\dpOf{\funPosetF}{\resPosetR}$.
    In particular, its feasible set $\ldprb_{\mathrm{tot}} \subseteq \funPosetF \times \resPosetR$ is a polyhedron.
\end{theorem}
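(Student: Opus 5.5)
The plan is a direct ``lifted'' argument rather than an induction over a decomposition of the multigraph. Any multigraph interconnection can in principle be rewritten as series, parallel and trace operations and then handled by \cref{lem:ldp-composition}, but establishing that decomposition in general would itself require proof. I will instead write down the system-level feasible set explicitly as the projection of one large polyhedron.

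\textbf{Step 1: lifted system.} For each node $d\in\mathcal{V}$ introduce variables $x_{\funPosetF_d}\in\mathbb{R}_{\ge 0}^{n_{\funPosetF_d}}$ and $x_{\resPosetR_d}\in\mathbb{R}_{\ge 0}^{n_{\resPosetR_d}}$, and collect them into $z$. Impose three groups of constraints:
\begin{enumerate}[label=(\roman*), leftmargin=*]
\item the block-diagonal component constraints $A_{\funPosetF_d}x_{\funPosetF_d}+A_{\resPosetR_d}x_{\resPosetR_d}\ge b_d$ for every $d$;
\item for every edge $e=\tup{\tup{d_1,i_1},\tup{d_2,j_2}}\in\mathcal{E}$, the single linear inequality $(x_{\funPosetF_{d_2}})_{j_2}-(x_{\resPosetR_{d_1}})_{i_1}\ge 0$;
\item the nonnegativity constraints $z\ge 0$.
\end{enumerate}
These finitely many linear inequalities define a polyhedron $\mathcal{Z}$. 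If some component posets $\funPosetF_d,\resPosetR_d$ are proper Euclidean sub-posets, I take them to be polyhedral; their constraints are then simply appended to $\mathcal{Z}$ (otherwise I interpret the definition, as the paper does, through the $H$-representation).

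\textbf{Step 2: identify the semantics.} By \cref{def:cdp}(iii), the system functionality $\poselxF$ is the tuple of unconnected functionality coordinates and $\poselxR$ is the tuple of unconnected resource coordinates. Both are coordinate selections of $z$, so there is a linear coordinate projection $\pi$ with $\pi(z)=\tup{\poselxF,\poselxR}$. I will argue that the composed CDP's feasible set is
\[
\ldprb_{\mathrm{tot}}=\pi(\mathcal{Z}).
\]
This holds because the semantics of a CDP is precisely ``there exist internal functionality and resource values satisfying each component relation and each edge inequality.'' This is the same existential form used in the series and trace cases of \cref{lem:ldp-composition}: series and trace are just the one- and two-edge instances of (ii) combined with projection.

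\textbf{Step 3: conclude.} Linear images, in particular coordinate projections, of polyhedra are polyhedra (for example by Fourier--Motzkin elimination, as already used in \cref{lem:ldp-composition}). Hence $\ldprb_{\mathrm{tot}}$ is a polyhedron in $\funPosetF\times\resPosetR$, and it has an $H$-representation of the form \cref{eq:ldp-representation}.

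\textbf{Main obstacle: the upper-set property.} To be an element of $\dpOf{\funPosetF}{\resPosetR}$, the set $\ldprb_{\mathrm{tot}}$ must be an upper set of $\funPosetF\op\times\resPosetR$. I will show this by witness modification. Take $\tup{\poselxF,\poselxR}\in\ldprb_{\mathrm{tot}}$ with witness $z$, and let $\poselxF'\preceq\poselxF$ and $\poselxR'\succeq\poselxR$. Replace the exposed coordinates of $z$ by $\poselxF'$ and $\poselxR'$, leaving all internal coordinates fixed.
\begin{enumerate}[label=(\alph*), leftmargin=*]
\item Each component constraint (i) still holds, because each $d$ is itself a DP, i.e.\ an upper set in $\funPosetF_d\op\times\resPosetR_d$, and the modification only weakens its exposed functionalities or strengthens its exposed resources.
\item The edge inequalities (ii) involve only internal coordinates and are therefore unaffected.
\end{enumerate}
The delicate point is bookkeeping when a coordinate is both exposed and connected. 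By \cref{def:cdp}(iii) this cannot happen, since only unconnected coordinates are exposed. With that observed, the argument closes.
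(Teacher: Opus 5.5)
Your proof is correct, but it does not follow the paper's route.

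\textbf{The paper's route.} The paper argues structurally. It rewrites the multigraph as an expression in series, parallel, intersection and trace. Where ports fan out or fan in, it inserts splitter and merger DPs $\dprb^s_{\posetP,k}$, $\dprb^m_{\posetP,k}$, citing \cite{zardini2023co} for the fact that this rewriting is always possible. It then invokes \cref{lem:ldp-composition}, so the only new check is that splitters and mergers are polyhedral.

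\textbf{Your route.} You go straight to the lifted polyhedron and conclude with a single projection. This is essentially the monolithic encoding the paper only introduces later in \cref{prop:lcdp-global-polyhedron}. It buys you several things:
\begin{itemize}
\item It is self-contained, with no decomposition theorem needed.
\item Fan-in and fan-out are handled automatically, because several edge inequalities may share a coordinate.
\item Your edge inequalities match \cref{def:cdp}(ii) literally. The paper's later encoding uses equalities and appeals to a monotonicity remark to justify that choice.
\item You verify the upper-set property explicitly. The paper leaves this implicit in the assertion that the composition operators map DPs to DPs.
\end{itemize}

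\textbf{What the paper's route buys.} It works directly with the compositional reading of ``composing the component DPs along the multigraph'' and reuses the closure lemma. Your Step 2 instead takes the existential (lifted) semantics as the definition of the CDP. That is the standard reading, and the one the paper itself adopts in \cref{prop:lcdp-global-polyhedron}. But if one insisted on the compositional semantics, showing the two readings agree is exactly the decomposition fact the paper outsources to \cite{zardini2023co}.

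\textbf{A minor simplification.} Your caveat about possibly non-polyhedral component posets is unnecessary. \cref{def:lcdp}(i) requires each $\ldprb_d$ itself to be a polyhedron, so you can take an $\mathbf{H}$-representation of $\ldprb_d$ directly, with the domain constraints included.
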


\begin{proof}
    A \gls{abk:ldp} is constructed from a \gls{abk:lcdp} by applying compositions in \cref{def:dp-interconnections} to the components according to $\tup{\mathcal{V}, \mathcal{E}}$,
    adding splitter/merger \glspl{abk:dp} of the form
    \begin{equation*}
    \begin{aligned}
        \dprb_{\posetP, k}^s &\defeq \setWithArg{\tup{\poselxP, \tup{\posx'_{\posetP,1}, \cdots, \posx'_{\posetP,k}}}}{\poselxP \posetleq \posx'_{\posetP,i}, \, \forall i},\\
    \dprb_{\posetP, k}^m &\defeq \setWithArg{\tup{\tup{\posx_{\posetP,1}, \cdots, \posx_{\posetP,k}}, \poselxP'}}{\posx_{\posetP,i} \posetleq \poselxP', \, \forall i}
    \end{aligned}
    \end{equation*}
    when necessary \cite{zardini2023co}.
    Since composing \glspl{abk:ldp} gives \glspl{abk:ldp}, we only need to prove these splitters/mergers \glspl{abk:dp} are \glspl{abk:ldp}, which follows by their definitions.
\end{proof}

\section{Feasible-Set Construction and Query Solving for LCDPs}\label{sec:lcdp-computation}

\Cref{thm:lcdp-is-ldp} implies that any \gls{abk:lcdp} induces a system-level \gls{abk:ldp}, and therefore inherits the \gls{abk:molp}-based query semantics of \cref{lem:ldp-queries-molp}.
Computationally, this gives rise to two tasks:
(i) \emph{query evaluation}, i.e., computing Pareto-minimal resources (or Pareto-maximal functionalities) for fixed external ports, and
(ii) \emph{explicit feasible-set construction}, i.e., producing an $\mathbf{H}$-representation of the induced system-level polyhedron $\ldprb_{\mathrm{tot}}\subseteq \funPosetF\times\resPosetR$ for reuse across queries or further analysis.
Both tasks reduce to polyhedral calculus.
An \gls{abk:lcdp} can be represented as the Cartesian product of the component polyhedra intersected with a sparse linear subspace enforcing the wiring equalities; the system-level feasible set is then obtained by projection onto the external ports.
This yields two complementary computational strategies.
The \emph{monolithic} strategy (\cref{subsec:monolithic}) assembles the lifted block-angular polyhedron once and answers each query as a single \gls{abk:molp} in the full variable space, avoiding explicit projection.
The \emph{compositional} strategy (\cref{subsec:compositional}) follows the co-design graph and incrementally eliminates internal wiring variables, producing a lower-dimensional $\mathbf{H}$-representation in external coordinates at the cost of possible inequality growth.
\Cref{prop:lcdp-global-polyhedron,prop:lcdp-query-lift} formalize the global polyhedral encoding and the equivalence between projected and lifted query solving.

\begin{lemma}[Global polyhedral encoding of an \gls{abk:lcdp}]\label[lemma]{prop:lcdp-global-polyhedron}
Consider an \gls{abk:lcdp}~$\tup{\funPosetF,\resPosetR,\tup{\mathcal{V},\mathcal{E}}}$ in the sense of \cref{def:lcdp}.
For each node $d\in\mathcal{V}$, let
\(
\ldprb_d = \{z_d \in \mathbb{R}^{n_d}\mid M_d z_d \ge m_d\}
\)
be an $\mathbf{H}$-representation of its component feasible set (including the port-domain constraints $z_d \in \funPosetF_d\times \resPosetR_d$).
Let $z_{\mathrm{global}} = [z_1^\top,\dots,z_N^\top]^\top$ and define
\begin{equation*}
\begin{aligned}
M_{\mathrm{blk}} &\defeq \operatorname{block-diag}(M_1,\dots,M_N),\\
m_{\mathrm{blk}} &\defeq [m_1^\top,\dots,m_N^\top]^\top.
\end{aligned}
\end{equation*}
Let~$E z_{\mathrm{global}}=0$ be the collection of all wiring equalities induced by the edges~$\mathcal{E}$ (each row of~$E$ has exactly one~$+1$ and one~$-1$).
Then the \emph{full-space} system-level feasible set of the \gls{abk:lcdp} is the polyhedron
\begin{equation}\label{eq:ldp-fullspace}
\ldprb_{\mathrm{tot}}^{\mathrm{full}}
=
\Bigl\{
z_{\mathrm{global}}
\Big|
M_{\mathrm{blk}} z_{\mathrm{global}} \ge m_{\mathrm{blk}},
E z_{\mathrm{global}} = 0
\Bigr\}.
\end{equation}

\begin{remark}\label{rmk}
    Writing inequalities in \cref{fig:dp_def} as equalities in \cref{prop:lcdp-global-polyhedron} produces the same system-level feasible set, since for any feasible $z_\mathrm{global}$ where one constraint $x_{\R{\posetP}} < x_{\F{\posetP}}$ is strict, assigning, for instance, $\hat{x}_{\R{\posetP}} \defeq x_{\F{\posetP}}$, results in another feasible global variable because every component is monotone.
    It also provides and requires the same functionality and resource at the system level.
\end{remark}

If~$S_{\mathrm{ext}}$ is a selection matrix extracting the external ports~$z_{\mathrm{ext}} = [\poselxF^\top,\poselxR^\top]^\top$ from $z_{\mathrm{global}}$,
then the induced \emph{external-port} system-level feasible set is the projection
\begin{equation}\label{eq:ldp-projection}
\ldprb_{\mathrm{tot}}
=
\Bigl\{
z_{\mathrm{ext}}
\ \Big|\ 
\exists z_{\mathrm{global}} \in \ldprb_{\mathrm{tot}}^{\mathrm{full}}
\text{ s.t. } z_{\mathrm{ext}} = S_{\mathrm{ext}}z_{\mathrm{global}}
\Bigr\}.
\end{equation}
\end{lemma}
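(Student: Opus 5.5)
The plan is to prove the two claims of \cref{prop:lcdp-global-polyhedron} separately: that the full-space set \eqref{eq:ldp-fullspace} is a polyhedron, and that its projection \eqref{eq:ldp-projection} is the system-level feasible set $\ldprb_{\mathrm{tot}}$ of \cref{thm:lcdp-is-ldp}.

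\textbf{Polyhedrality.} Since $\ldprb_{\mathrm{tot}}^{\mathrm{full}}$ is cut out by finitely many linear inequalities, it is a polyhedron: write $Ez_{\mathrm{global}}=0$ as the pair $Ez_{\mathrm{global}}\ge 0$, $-Ez_{\mathrm{global}}\ge 0$ and stack these rows under $M_{\mathrm{blk}}z_{\mathrm{global}}\ge m_{\mathrm{blk}}$. The block-diagonal part is the $\mathbf{H}$-description of the Cartesian product $\prod_d \ldprb_d$, because the variables $z_d$ are disjoint. The projection $S_{\mathrm{ext}}$ is a linear map, and linear images of polyhedra are polyhedra (as already used in \cref{lem:ldp-composition}). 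Hence \eqref{eq:ldp-projection} is a polyhedron.

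\textbf{Identification with the system-level feasible set.} By \cref{def:cdp}, the system-level feasible set consists of the external port values $z_{\mathrm{ext}}$ for which there exist internal port values such that:
\begin{enumerate}[label=(\roman*)]
\item each component tuple $z_d$ lies in $\ldprb_d$; and
\item for every edge $\tup{\tup{d_1,i_1},\tup{d_2,j_2}}$, the inequality $x_{\resPosetR_{d_1,i_1}}\preceq x_{\funPosetF_{d_2,j_2}}$ holds.
\end{enumerate}
This existential description is what the series, trace, splitter and merger constructions in the proof of \cref{thm:lcdp-is-ldp} produce, since each of them introduces exactly one existentially quantified wiring variable per edge.

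The only gap between this description and \eqref{eq:ldp-fullspace} is that the latter imposes equality on each edge rather than inequality. Following \cref{rmk}, we show that the two projections coincide.
\begin{itemize}
\item \emph{Equality set $\subseteq$ inequality set.} This inclusion is trivial, because equality implies the inequality.
\item \emph{Inequality set $\subseteq$ equality set.} Take a solution in which some edge is strict, $x_{\resPosetR_{d_1,i_1}}\prec x_{\funPosetF_{d_2,j_2}}$. Replace the resource coordinate of $d_1$ by $\hat x_{\resPosetR_{d_1,i_1}}\defeq x_{\funPosetF_{d_2,j_2}}$. Since $\ldprb_{d_1}$ is an upper set in $\funPosetF_{d_1}\op\times\resPosetR_{d_1}$ (\cref{def:dp}), increasing a resource keeps $z_{d_1}$ feasible. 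No other edge involves this coordinate, because each resource port carries at most one edge, with fan-out handled by the splitter and merger DPs. If this coordinate is an external resource port, we instead raise the functionality side or note that internal edges never touch external coordinates, so $z_{\mathrm{ext}}$ is unchanged.
\end{itemize}
Repeating the replacement edge by edge, a finite number of times, yields a solution of $Ez_{\mathrm{global}}=0$ with the same external ports. This proves that \eqref{eq:ldp-projection} equals $\ldprb_{\mathrm{tot}}$.

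\textbf{Main obstacle.} I expect the main obstacle to be this equality-versus-inequality step. Two points need care. First, the raised coordinate must stay inside the domain poset $\resPosetR_{d_1}$; this holds because $\hat x$ takes the value of a point in the identical poset $\funPosetF_{d_2,j_2}$, provided the sub-posets are assumed compatible as in \cref{def:lcdp}(ii). Second, the per-edge modifications must not interfere with one another, which is guaranteed by each port belonging to a single edge after splitters and mergers are inserted. I would first state this port-uniqueness convention explicitly and then run the induction over $|\mathcal{E}|$.
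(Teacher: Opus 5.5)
Your proof is correct and follows the paper's route. The paper unfolds the definitions (stack the local $\mathbf{H}$-representations into $M_{\mathrm{blk}}$, impose the wiring as $Ez_{\mathrm{global}}=0$, project with $S_{\mathrm{ext}}$) and leaves the equality-versus-inequality issue to the embedded Remark. You make that Remark's monotonicity argument explicit, and your convention that each resource port carries a single edge (via splitters/mergers) is exactly what the Remark silently relies on: when ports are shared by several edges, the equality encoding can be strictly smaller than the inequality semantics of \cref{def:cdp}.

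There is one slip, in the clause about raising the functionality side when the raised coordinate is external. That is the wrong direction: lowering a functionality preserves feasibility, raising it need not. The case is vacuous anyway, because by \cref{def:cdp}(iii) connected ports are never external.
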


\begin{proof}
By definition of an \gls{abk:lcdp} (\cref{def:lcdp}), a system design is feasible if and only if each component design $z_d$ satisfies its local feasibility constraints and all interconnections identify the corresponding connected port coordinates by equality.
Stacking the local $\mathbf{H}$-representations yields the block-diagonal system~$M_{\mathrm{blk}} z_{\mathrm{global}} \ge m_{\mathrm{blk}}$, while all port identifications yield the linear equalities~$E z_{\mathrm{global}}=0$, giving~\eqref{eq:ldp-fullspace}.
Finally, the system-level feasible set retains only the external ports, i.e., it is the image of the full-space set under the coordinate projection encoded by~$S_{\mathrm{ext}}$, which is~\eqref{eq:ldp-projection}.
\end{proof}

\begin{lemma}[Lifted MOLP queries equal projected DP queries]\label[lemma]{prop:lcdp-query-lift}
Let $\ldprb_{\mathrm{tot}}$ and $\ldprb_{\mathrm{tot}}^{\mathrm{full}}$ be as in \cref{prop:lcdp-global-polyhedron}.
Let $S_{\F{F}}$ and $S_{\R{R}}$ be selection matrices such that $\poselxF=S_{\F{F}} z_{\mathrm{global}}$ and $\poselxR=S_{\R{R}} z_{\mathrm{global}}$.
Fix an external functionality $\barposelxF\in\funPosetF$ and consider the two feasible resource sets
\begin{equation*}
\begin{aligned}
\mathcal{R}_{\mathrm{ext}}(\barposelxF)
&\defeq
\{\poselxR \mid \tup{\barposelxF,\poselxR}\in \ldprb_{\mathrm{tot}}\},\\
\mathcal{R}_{\mathrm{lift}}(\barposelxF)
&\defeq
\{S_{\R{R}} z_{\mathrm{global}} \mid z_{\mathrm{global}}\in \ldprb_{\mathrm{tot}}^{\mathrm{full}},\ S_{\F{F}} z_{\mathrm{global}}=\barposelxF\}.
\end{aligned}
\end{equation*}
Then $\mathcal{R}_{\mathrm{ext}}(\barposelxF)=\mathcal{R}_{\mathrm{lift}}(\barposelxF)$, and hence the Pareto-minimal (efficient) resources returned by the DP query coincide with the efficient outcomes of the lifted MOLP
\begin{equation}\label{eq:monolithic-molp-query}
\begin{aligned}
&\mathrm{Min}_{\preceq} \ S_{\R{R}} z_{\mathrm{global}}\\
&\text{s.t.} \quad
M_{\mathrm{blk}}z_{\mathrm{global}} \ge m_{\mathrm{blk}},\ 
E z_{\mathrm{global}}=0,\ 
S_{\F{F}} z_{\mathrm{global}} = \barposelxF.
\end{aligned}
\end{equation}
An analogous statement holds for the ``fix \R{resources}, maximize \F{functionalities}'' query.
\end{lemma}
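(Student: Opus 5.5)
The plan is a double-inclusion argument for $\mathcal{R}_{\mathrm{ext}}(\barposelxF)=\mathcal{R}_{\mathrm{lift}}(\barposelxF)$, using only the projection description \eqref{eq:ldp-projection} from \cref{prop:lcdp-global-polyhedron}. Since $S_{\mathrm{ext}}$ extracts $z_{\mathrm{ext}}=[\poselxF^\top,\poselxR^\top]^\top$, we may write $S_{\mathrm{ext}}=\bigl[\begin{smallmatrix} S_{\F{F}}\\ S_{\R{R}}\end{smallmatrix}\bigr]$. Then $z_{\mathrm{ext}}=S_{\mathrm{ext}}z_{\mathrm{global}}$ is equivalent to the pair of conditions $\poselxF=S_{\F{F}}z_{\mathrm{global}}$ and $\poselxR=S_{\R{R}}z_{\mathrm{global}}$.

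\emph{Inclusion $\subseteq$.} Let $\poselxR\in\mathcal{R}_{\mathrm{ext}}(\barposelxF)$, so that $\tup{\barposelxF,\poselxR}\in\ldprb_{\mathrm{tot}}$. By \eqref{eq:ldp-projection} there is a witness $z_{\mathrm{global}}\in\ldprb_{\mathrm{tot}}^{\mathrm{full}}$ with $S_{\F{F}}z_{\mathrm{global}}=\barposelxF$ and $S_{\R{R}}z_{\mathrm{global}}=\poselxR$. This witness is admissible in the definition of $\mathcal{R}_{\mathrm{lift}}(\barposelxF)$, so $\poselxR\in\mathcal{R}_{\mathrm{lift}}(\barposelxF)$.

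\emph{Inclusion $\supseteq$.} Let $\poselxR=S_{\R{R}}z_{\mathrm{global}}$ for some $z_{\mathrm{global}}\in\ldprb_{\mathrm{tot}}^{\mathrm{full}}$ with $S_{\F{F}}z_{\mathrm{global}}=\barposelxF$. Then $S_{\mathrm{ext}}z_{\mathrm{global}}=\tup{\barposelxF,\poselxR}$, which lies in $\ldprb_{\mathrm{tot}}$ by \eqref{eq:ldp-projection}. Hence $\poselxR\in\mathcal{R}_{\mathrm{ext}}(\barposelxF)$.

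\emph{Efficient outcomes.} The feasible outcome set of the lifted MOLP \eqref{eq:monolithic-molp-query} is by construction the image $\{S_{\R{R}}z_{\mathrm{global}}\}$ over its feasible set, which is exactly $\mathcal{R}_{\mathrm{lift}}(\barposelxF)$. By \cref{def:efficient-point}, efficient outcomes of the MOLP are the $\preceq$-minimal elements of this outcome set. By \cref{thm:lcdp-is-ldp} and \cref{lem:ldp-queries-molp}(i), the DP query returns the $\preceq$-minimal elements of $\mathcal{R}_{\mathrm{ext}}(\barposelxF)$. Since the two sets are equal, their minimal elements coincide.

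One subtlety needs care. The MOLP is not in the standard form of \cref{def:molp}: its objective $C=S_{\R{R}}$ is not the identity, and it contains equality constraints. I will therefore argue at the level of outcome sets and use the equivalent characterization in \cref{def:efficient-point}, $Cx^*\notin CX+\mathbb{R}_{\ge0}^q\setminus\{0\}$. This characterization depends only on the image $CX$, so no reformulation into standard form is needed. Equalities can in any case be written as paired inequalities, and nonnegativity is already included in the port-domain constraints of $M_d z_d\ge m_d$.

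For the ``fix resources, maximize functionalities'' query, the same argument applies with the roles of $S_{\F{F}}$ and $S_{\R{R}}$ swapped, using $\mathrm{Min}_{\preceq}(-S_{\F{F}}z_{\mathrm{global}})$ as in \cref{lem:ldp-queries-molp}(ii).

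I do not expect a real obstacle here. The only point needing attention is that efficiency be stated in outcome space, as above, rather than in decision space: a single outcome may have many lifted preimages, and the claim concerns efficient outcomes, not efficient points $z_{\mathrm{global}}$.
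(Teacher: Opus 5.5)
Your proposal is correct and follows the paper's proof. The paper likewise reads $\mathcal{R}_{\mathrm{ext}}(\barposelxF)=\mathcal{R}_{\mathrm{lift}}(\barposelxF)$ directly off the projection description in \cref{prop:lcdp-global-polyhedron}, where you simply split the ``if and only if'' into two inclusions, and then concludes that identical attainable resource sets under the same order have identical efficient outcomes. Your remark that efficiency should be stated in outcome space, because the lifted MOLP is not in the standard form of \cref{def:molp}, is a reasonable clarification that the paper leaves implicit.
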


\begin{proof}
By \cref{prop:lcdp-global-polyhedron}, $\tup{\barposelxF,\poselxR}\in \ldprb_{\mathrm{tot}}$ holds if and only if there exists a full assignment $z_{\mathrm{global}}$ such that $z_{\mathrm{global}}\in \ldprb_{\mathrm{tot}}^{\mathrm{full}}$, $S_{\F{F}} z_{\mathrm{global}}=\barposelxF$, and $S_{\R{R}} z_{\mathrm{global}}=\poselxR$.
This is exactly the equality $\mathcal{R}_{\mathrm{ext}}(\barposelxF)=\mathcal{R}_{\mathrm{lift}}(\barposelxF)$.
Since both optimization problems compare resource vectors under the same componentwise order $\preceq$ and have identical attainable resource sets, their efficient outcomes coincide.
The second query follows by the same argument with $(S_{\F{F}},S_{\R{R}})$ exchanged and the order reversal accounted for as in \cref{lem:ldp-queries-molp}.
\end{proof}

\subsection{Monolithic formulation}\label{subsec:monolithic}
The \emph{monolithic} strategy implements the lifted encoding of \cref{prop:lcdp-global-polyhedron} by introducing \emph{all} component port variables as decision variables, enforcing (i) each component's local feasibility constraints and (ii) all wiring equalities.
This yields the full-space polyhedron $\ldprb_{\mathrm{tot}}^{\mathrm{full}}$ in~\eqref{eq:ldp-fullspace}, whose constraint matrix is \emph{block-angular}: a block-diagonal part $M_{\mathrm{blk}}$ (one block per component) plus a very sparse coupling part $E$ (each wiring row has exactly two nonzeros).
For query evaluation, {no explicit projection} onto external ports is required: by \cref{prop:lcdp-query-lift}, the projected \gls{abk:dp} query is equivalent to solving the lifted \gls{abk:molp}~\eqref{eq:monolithic-molp-query} (and its dual) in the full variable space.
This is typically the most efficient route for \emph{single-shot} queries, because it avoids polyhedral projection and lets the solver handle internal feasibility implicitly.
\cref{alg:monolithic} details the construction of the lifted constraint system in practice.

\begin{remark}[Monolithic formulation and Dantzig--Wolfe decomposition]
The lifted feasibility system in~\eqref{eq:ldp-fullspace} is block-angular, the classical structural setting for \emph{Dantzig--Wolfe} (DW) decomposition.
For scalar-objective instances (e.g., minimize a single weighted sum of resources), DW can exploit the separation between local component constraints and global couplings to solve large instances via a master problem plus one subproblem per node~\cite{vanderbeck2006generic}.
For \gls{abk:molp} queries, DW is not directly applicable as a complete Pareto-front method, but the same sparsity remains valuable: many \gls{abk:molp} algorithms (including Benson-type outer approximation) reduce computation to solving sequences of LPs, each inheriting the same coupling structure.
This suggests that DW-style decomposition can still be used \emph{inside} Pareto-front algorithms to accelerate the repeated LP solves.
\end{remark}

\begin{algorithm}[tb]
    \caption{Monolithic feasible set for composite LCDP}\label{alg:monolithic}
    \begin{algorithmic}[1]
        \Require \gls{abk:lcdp} $\langle\funPosetF,\resPosetR,\langle \mathcal{V},\mathcal{E}\rangle\rangle$; for each $d \in \mathcal{V}$ ($\left|\mathcal{V}\right| = N$), component \gls{abk:ldp} with $\ldprb_d = \{ \tup{x_{\funPosetF,d},x_{\resPosetR,d}} : A_{\funPosetF,d}x_{\funPosetF,d} + A_{\resPosetR,d}x_{\resPosetR,d} \ge b_d \}$
        \Ensure System-level feasible set $\ldprb_{\mathrm{tot}}^{\mathrm{full}}$ in $\mathbf{H}$-representation $M_{\mathrm{blk}} z_{\mathrm{global}} \ge m_{\mathrm{blk}}$ (full-space variables)
        \State \textbf{Local formulation:} For each node $d \in \mathcal{V}$, set $z_d \gets [x_{\funPosetF,d}^\top,\, x_{\resPosetR,d}^\top]^\top$, $M_d \gets [A_{\funPosetF,d}\;\; A_{\resPosetR,d}]$, $m_d \gets b_d$
        \State \textbf{Aggregation:} Form $z_{\mathrm{global}} \gets [z_1^\top,\, \ldots,\, z_N^\top]^\top$, $M_{\mathrm{blk}} \gets \operatorname{block-diag}(M_1,\ldots,M_N)$, $m_{\mathrm{blk}} \gets [m_1^\top,\, \ldots,\, m_N^\top]^\top$
        \State System $M_{\mathrm{blk}} z_{\mathrm{global}} \ge m_{\mathrm{blk}}$ is block-diagonal (one block per node)
        \State \textbf{Edge constraints:} For each edge $e \in \mathcal{E}$ (series or feedback, \cref{def:lcdp}), the edge links one functionality and one resource. In $z_{\mathrm{global}}$ (with $z_d = [x_{\funPosetF,d}^\top,\, x_{\resPosetR,d}^\top]^\top$), find the indices $i_{\mathrm{f}}$, $i_{\mathrm{r}}$ of these two entries. Append to $M_{\mathrm{blk}}$ one row with $+1$ at $i_{\mathrm{f}}$, $-1$ at $i_{\mathrm{r}}$, zeros elsewhere, and append $0$ to $m_{\mathrm{blk}}$ (so $z_{i_{\mathrm{f}}} - z_{i_{\mathrm{r}}} \ge 0$; add a second row $-z_{i_{\mathrm{f}}} + z_{i_{\mathrm{r}}} \ge 0$ to enforce equality). Thus each edge adds one (or two) rows to the constraint matrix.
        \State \Return $\ldprb_{\mathrm{tot}}^{\mathrm{full}} = \{ z_{\mathrm{global}} : M_{\mathrm{blk}} z_{\mathrm{global}} \ge m_{\mathrm{blk}} \}$
    \end{algorithmic}
\end{algorithm}

\subsection{Compositional approach and variable elimination}\label{subsec:compositional}
The monolithic polyhedron~\eqref{eq:ldp-fullspace} is built in a lifted space containing \emph{all} component port variables, including many internal wiring coordinates that are not of direct system-level interest.
When one needs an explicit system-level feasible set $\ldprb_{\mathrm{tot}}\subseteq \funPosetF\times\resPosetR$ (e.g., to amortize repeated queries or to export a reduced model), it can be advantageous to eliminate internal variables and work directly in external coordinates.
The \emph{compositional} approach does exactly this by traversing the co-design graph and applying the same polyhedral operations that appear in the closure proof (\cref{lem:ldp-composition}): Cartesian products (parallel), constraint stacking (intersection), linear coupling constraints (series/feedback), and projections (variable elimination).

\subsubsection{Composition as polyhedral calculus}
At the level of feasible sets, the four \gls{abk:dp} interconnections can be written uniformly as: (i) take a Cartesian product of the participating polyhedra, (ii) intersect with a linear subspace enforcing port identifications, and (iii) project out the internal coordinates.
For example, suppose an edge identifies one resource coordinate of subsystem $u$ with one functionality coordinate of subsystem $v$; write this equality as $a^\top [z_u^\top,z_v^\top]^\top = 0$ with a sparse row $a^\top$ containing one $+1$ and one $-1$.
Then the \emph{series contraction} of $u$ and $v$ is
\begin{equation}\label{eq:series-polyhedral-calculus}
P_{uv}
=
\Pi_{\mathrm{keep}}
\Bigl(
(P_u \times P_v)\ \cap\ \{\tup{z_u,z_v}\mid a^\top[z_u^\top,z_v^\top]^\top = 0\}
\Bigr),
\end{equation}
where $\Pi_{\mathrm{keep}}$ removes the internal connected coordinate (and possibly other internal ports that cease to be boundary variables after contraction).
Parallel composition is simply $P_u\times P_v$ in disjoint coordinates.
Intersection of two components $P_a,P_b$ sharing the same external ports is even simpler: the composed feasible set is $P_a\cap P_b$, realized by vertically stacking the two inequality systems without introducing any new variables or projections.
Feedback (trace) is the special case where both coordinates live in the same polyhedron $P$: intersect with an equality subspace identifying the two ports, then project out the identified internal variable.

\subsubsection{Graph-guided elimination schedule}
The goal is to choose an elimination order that keeps intermediate polyhedra manageable.
We use the multigraph $\langle\mathcal{V},\mathcal{E}\rangle$ to guide a \emph{projection schedule}:
\begin{itemize}[leftmargin=*]
    \item \textbf{Intersection preprocessing:} Whenever two or more components share the \emph{same} external ports (parallel edges in the multigraph), their inequality systems are stacked to form a single merged node before any series or feedback step, a projection-free operation that just concatenates constraints.
    \item \textbf{Parallel decomposition:} Connected components of the underlying undirected graph are independent and therefore compose in parallel.
    \item \textbf{Series-first skeleton:} Within each connected component, choose a spanning forest (e.g., via DFS). The forest edges define a \emph{series contraction skeleton} that reduces the component to a single composite node.
    \item \textbf{Feedback-late edges:} The remaining edges (those not in the spanning forest) are precisely the cycle-closing constraints. After the series skeleton is contracted, each such edge becomes an internal wiring equality within the composite node and is handled by a feedback (trace) elimination.
\end{itemize}
This is algorithmically convenient: intersection merges reduce the node count without any projection, series contractions primarily reduce the graph size, while feedback eliminations remove the remaining internal degrees of freedom.

\subsubsection{Projection primitives and practical considerations}
Each contraction step requires polyhedral projection. 
In low dimensions, \gls{abk:fme} is simple and effective; however, it can generate up to $p\cdot q$ new inequalities when eliminating one variable, where $p$ and $q$ are the numbers of inequalities with positive/negative coefficients in that variable.
Therefore, practical implementations typically combine projection with:
(i) \emph{redundancy removal} (LP-based checks to delete implied inequalities),
(ii) \emph{variable scaling/normalization} to reduce numerical sensitivity, and
(iii) \emph{hybrid cutoffs} that switch to the monolithic representation once the inequality count exceeds a budget.

\begin{lemma}[Correctness of compositional elimination]\label[lemma]{prop:compositional-correct}
\cref{alg:compositional} returns a polyhedron in the external coordinates whose feasible set equals the system-level feasible set $\ldprb_{\mathrm{tot}}$ defined by the \gls{abk:lcdp} semantics (equivalently, the projection in~\eqref{eq:ldp-projection}).
\end{lemma}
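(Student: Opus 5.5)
The plan is an induction on the steps of \cref{alg:compositional}. The invariant to maintain is that the current collection of composite nodes, each with its polyhedron in its own boundary coordinates, has the same joint feasible set as the lifted system \eqref{eq:ldp-fullspace} after projection. More precisely, let the current state be a multigraph whose nodes $u$ carry polyhedra $P_u$ and whose remaining edges are unprocessed wiring equalities. The invariant is that the projection onto the external coordinates of
\[
\Bigl(\textstyle\prod_u P_u\Bigr)\cap\{\text{remaining wiring equalities}\}
\]
equals $\ldprb_{\mathrm{tot}}$ as given in \eqref{eq:ldp-projection}.

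\textbf{Base case.} Initially each $P_u=\ldprb_d$, and the remaining edges are all of $\mathcal{E}$. The invariant then holds directly by \cref{prop:lcdp-global-polyhedron}, since projecting \eqref{eq:ldp-fullspace} is exactly \eqref{eq:ldp-projection}. If the algorithm uses inequality wiring instead of equalities, \cref{rmk} shows that the feasible set is the same.

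\textbf{Preservation under each step.} There are four kinds of step.
\begin{enumerate}
\item \emph{Intersection merge.} This replaces $P_a\times P_b$ restricted to identified coordinates by $P_a\cap P_b$. It is a bijective relabeling of the same point set, by \cref{lem:ldp-composition}(c).
\item \emph{Parallel combination.} This is a Cartesian product, so nothing changes.
\item \emph{Series contraction} \eqref{eq:series-polyhedral-calculus}. This intersects with one wiring equality and then projects out coordinates that appear in no remaining edge and are not external.
\item \emph{Feedback elimination.} This is the same operation applied within a single node.
\end{enumerate}
For the last two, the key fact is that projection commutes with products and with intersection by constraints not involving the eliminated coordinates. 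Concretely, if $Q\subseteq\mathbb{R}^{k}\times\mathbb{R}^{\ell}$ and $L$ is a set of constraints on the other coordinates $w$, then
\[
\Pi_{w}\bigl((Q\times W)\cap L\bigr)=\bigl(\Pi(Q)\times W\bigr)\cap L,
\]
provided the eliminated variables occur only in $Q$. Since each eliminated coordinate has, by the schedule, already had all of its incident edges processed, this applies at every step. Composing projections is itself a projection, so the overall projected set is unchanged. FME or any exact projection primitive returns an $\mathbf{H}$-representation of exactly this projection. Redundancy removal deletes only implied inequalities, so it also preserves the set.

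\textbf{Termination.} The spanning-forest skeleton contracts each connected component to a single node. The non-forest edges are then internal to that node and are removed by feedback steps. The components are finally combined in parallel. At the end there is one node, no remaining edges, and all internal coordinates have been eliminated. The invariant therefore says that this final polyhedron equals $\ldprb_{\mathrm{tot}}$. It is a polyhedron by closure under projection, consistent with \cref{thm:lcdp-is-ldp}.

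\textbf{Main obstacle.} The bookkeeping step I expect to be hardest is showing that a coordinate is never eliminated while some unprocessed edge still references it. This is what the commutation identity needs, and it also rules out a coordinate being "lost" when multiple edges meet at a port. I would handle it by stipulating that $\Pi_{\mathrm{keep}}$ removes only ports of zero remaining degree, which is a non-external condition. Splitters and mergers then handle ports of higher fan-out, as in the proof of \cref{thm:lcdp-is-ldp}, so that every port has at most one incident edge.
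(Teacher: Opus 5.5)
Your argument is correct, but it follows a different route from the paper's.

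The paper argues semantically. After the intersection merges, it identifies each series contraction with DP series composition, each non-forest edge with a trace, and distinct connected components with parallel composition (\cref{def:dp-interconnections}). The final polyhedron is therefore the feasible set of the DP obtained by composing along the graph.

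You instead stay on the lifted polyhedron \eqref{eq:ldp-fullspace} and carry a loop invariant: the external projection of $\prod_u P_u$, intersected with the unprocessed wiring constraints, equals $\ldprb_{\mathrm{tot}}$. The only tool it needs is that an existential over coordinates appearing in no other factor and no remaining constraint can be pushed inside. This gives you three things.
\begin{enumerate}
\item It proves the ``equivalently, \eqref{eq:ldp-projection}'' clause directly, rather than relying on the identification of graph composition with the lifted projection.
\item It does not depend on the order of steps. \cref{alg:compositional} actually processes parallel before feedback. Its DFS can also label as ``series'' a forward edge $u\to x$ alongside $u\to v\to x$, which closes an undirected cycle. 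The paper's spanning-forest narrative handles that case only loosely; your invariant covers it uniformly.
\item It isolates the one real correctness condition: a coordinate may be projected out only when it is non-external and no unprocessed constraint references it.
\end{enumerate}
In exchange, the paper's version gives each intermediate node a meaning as a sub-system LDP via \cref{lem:ldp-composition}.

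Two small fixes. First, justify the intersection merge by the diagonal bijection $\{(z_a,z_b)\in P_a\times P_b \mid z_a=z_b\}\cong P_a\cap P_b$, or by the upper-set property if the sharing is realized through splitter/merger inequalities. \cref{lem:ldp-composition}(c) is only a closure statement and does not give this identity. Second, under your zero-remaining-degree rule, splitters and mergers are unnecessary, because the commutation identity never requires ports to have degree at most one.
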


\begin{proof}
Fix a connected component $\mathcal{C}$.
Any components sharing identical external ports are first merged by stacking their inequality systems; this realizes DP intersection (\cref{def:dp-interconnections}), since the stacked system $[M_a; M_b]\,z \ge [m_a; m_b]$ defines exactly $P_a \cap P_b$.
Each subsequent series contraction step in \cref{alg:compositional} replaces two neighboring subsystems by their series interconnection, implemented as ``product, wiring equality, and projection'' as in~\eqref{eq:series-polyhedral-calculus}; this operation is exactly the feasible-set realization of \gls{abk:dp} series composition in \cref{def:dp-interconnections}.
Thus, after contracting all edges of a spanning forest, the resulting composite node represents the feasible set of the parallel-free, cycle-free composition of all subsystems in $\mathcal{C}$.

Each remaining (non-forest) edge enforces an additional wiring equality that closes a cycle; imposing that equality and projecting out the internal identified port is precisely the feasible-set realization of DP feedback/trace (\cref{def:dp-interconnections}).
Therefore, after all cycle-closing edges are processed, the component polyhedron $P_{\mathcal{C}}$ equals the feasible set induced by composing all \glspl{abk:dp} in $\mathcal{C}$ and restricting to its external ports.

Finally, distinct connected components share no wiring constraints, so their feasible sets compose by Cartesian product, matching DP parallel composition.
Hence $\prod_k P_{\mathcal{C}_k}$ equals the system-level feasible set in external coordinates~$\ldprb_{\mathrm{tot}}$.
\end{proof}

\begin{remark}[Relation between monolithic and compositional representations]
The monolithic formulation constructs~$\ldprb_{\mathrm{tot}}^{\mathrm{full}}$ in~\eqref{eq:ldp-fullspace} and then (optionally) projects once onto the external ports.
The compositional formulation performs the same projection incrementally during graph reduction.
Both methods therefore compute the same external feasible set; they differ only in the elimination, trading off dimensionality (fewer variables) against potential inequality growth under projection.
\end{remark}

\begin{remark}[MOLP solution via Benson's algorithm]\label{rem:benson-algorithm}
    Each query on an \gls{abk:ldp} reduces to a polyhedral \gls{abk:molp} (\cref{lem:ldp-queries-molp}), or more generally a VLP (see the remark following \cref{def:upper-image-molp}).
    For polyhedral \gls{abk:molp}, the Benson outer-approximation algorithm is finite~\cite[Theorem~3.1]{benson1998outer} and recovers efficient extreme outcomes~\cite[Theorem~3.2]{benson1998outer}; further analysis shows recovery of weakly efficient outcomes as well~\cite[Theorem~3.3]{benson1998further}.
\end{remark}

\begin{algorithm}[tb]
    \caption{Compositional feasible set via graph traversal}\label{alg:compositional}
    \begin{algorithmic}[1]
        \Require \gls{abk:lcdp} $\tup{\funPosetF,\resPosetR,\langle \mathcal{V},\mathcal{E}}$; for each $d \in \mathcal{V}$, component \gls{abk:ldp} with $(M_d, m_d)$ s.t.\ $\ldprb_d = \{ z_d : M_d z_d \ge m_d \}$
        \Ensure System-level feasible set $\ldprb_{\mathrm{tot}}$ in $\mathbf{H}$-representation $M_G z_{\mathrm{ext}} \ge m_G$ (external-port variables)
        \State \textbf{Classify edges:} Run DFS on the directed graph (resource $\to$ functionality). Partition $\mathcal{E} = E_{\mathrm{ser}} \cup E_{\mathrm{fb}}$ (tree/forward = series, back edges = feedback). Find connected components of the underlying undirected graph for parallel grouping.
        \State \textbf{Merge intersections:} For every group of nodes sharing the same external ports (parallel edges in the multigraph), stack their inequality systems $[M_a; M_b]\,z \ge [m_a; m_b]$ into a single node. No projection is needed.
        \State \textbf{Process series:} Within each component, compute a topological order of nodes (DAG from removing back-edges). In this order, maintain a current composite LDP; for each series edge $\tup{u,v}$, compose with $v$ in series as in the proof of \cref{lem:ldp-composition}: form the stacked polyhedron in $\tup{x_P, x_Q, x_R}$, then apply \gls{abk:fme} to project onto $\tup{x_P, x_R}$. Replace $u$, $v$ by the composite node and update the graph.
        \State \textbf{Process parallel:} The previous step yields one composite LDP per connected component. Compose these in parallel as in \cref{lem:ldp-composition}: Cartesian product (block-diagonal $M$, stacked $m$), no shared variables.
        \State \textbf{Process feedback:} For each remaining back edge (feedback loop), apply the trace as in \cref{lem:ldp-composition}: impose equality of the linked resource and functionality, then project out that variable via \gls{abk:fme}. Update the composite.
        \State \Return $\ldprb_{\mathrm{tot}} = \{ z_{\mathrm{ext}} : M_G z_{\mathrm{ext}} \ge m_G \}$
    \end{algorithmic}
\end{algorithm}

\section{Convex feasible-set approximation}\label{subsec:convex-epsilon}

The monolithic and compositional strategies of \cref{subsec:monolithic,subsec:compositional} exploit that every component is an \gls{abk:ldp}, so that the induced system-level feasible set is a polyhedron and every query reduces to an \gls{abk:molp}.
In practice, however, subsystem models often involve nonlinear couplings that remain convex but are not polyhedral (e.g., second-order cone constraints, semidefinite constraints, or the convex bilinear inequality $p\,m\ge c$ over $p,m\ge 0$ that appears in \cref{subsec:rover}).
In such cases, the induced system-level feasible set is still a closed convex {upper} set, but it is not an \gls{abk:ldp}. This section shows how convex co-design models can still be handled by the \gls{abk:ldp} pipeline via {polyhedral outer approximation}.

\subsubsection{From convex DPs to approximate LDPs}
We say that a \gls{abk:dp} $\dprb\in\dpOf{\funPosetF}{\resPosetR}$ is \emph{convex} if its feasible set
$\dprb\subseteq \funPosetF\op\times\resPosetR\subseteq\mathbb{R}^{n_{\funPosetF}+n_{\resPosetR}}$
is nonempty, closed, and convex (in the ambient Euclidean topology).\footnote{Linear images (projections) of closed convex sets need not be closed in general.
When this matters, one can either assume regularity guaranteeing closedness of the induced projections, or work with the closure of the induced set (a convention in convex vector optimization when defining upper images).}
Convex sets admit polyhedral approximations.

\begin{definition}[Polyhedral outer approximation]\label{def:polyhedral-outer-approx}
Let $C\subseteq \mathbb{R}^n$ be nonempty.
A polyhedron $P\subseteq\mathbb{R}^n$ is a \emph{polyhedral outer approximation} of $C$ if $C\subseteq P$.
If, in addition, $C$ is an upper set in a cone-ordered space $\tup{K,\preceq_K}$ (i.e., $C+K\subseteq C$), then $P$ is an \emph{upper-set outer approximation} if $P+K\subseteq P$.
\end{definition}

An outer approximation yields an approximate \gls{abk:ldp} (hence \gls{abk:molp}-solvable queries), but its accuracy must be quantified in a way that remains meaningful for \emph{unbounded} sets.
Indeed, co-design query-feasible resource sets are inherently unbounded.
By the upper-set property of \glspl{abk:dp} (\cref{def:dp}), if $\poselxR$ is feasible for $\barposelxF$, then any $\poselxR'\succeq \poselxR$ is feasible as well.
For unbounded closed convex sets, the classical Hausdorff distance between the set and its approximation is usually infinite unless their recession cones coincide exactly;
see \cite[Proposition~3.5]{doerfler2022approximation}.
We therefore adopt the $\left(\varepsilon,\delta\right)$-approximation framework of~\cite{doerfler2022approximation}, which separates the error on the bounded part, measured by $\varepsilon$, from the asymptotic error in the recession cone, measured by $\delta$.
We recall the main notions from~\cite{doerfler2022approximation}.

\begin{definition}[Recession cone, line-freeness, self-boundedness]\label{def:recession-selfbounded}
Let $C\subseteq\mathbb{R}^n$ be nonempty, closed, and convex.
Its \emph{recession cone} is
\begin{equation*}
0^+C \defeq \{d\in\mathbb{R}^n \mid x+td\in C,\ \forall x\in C,\ \forall t\ge 0\}.
\end{equation*}
$C$ is \emph{line-free} if $0^+C\cap(-0^+C)=\{0\}$ (i.e., the recession cone is pointed).
If $0^+C\neq\{0\}$, the set $C$ is \emph{self-bounded} if there exists $y\in\mathbb{R}^n$ such that $C\subseteq y+0^+C$~\cite[Definition~3.2]{doerfler2022approximation}.
\end{definition}

\noindent For nonempty sets~$C_1,C_2\subseteq\mathbb{R}^n$, the \emph{excess} of~$C_1$ over~$C_2$ is
\begin{equation*}
    e[C_1,C_2]\defeq \sup_{c_1\in C_1}\ \inf_{c_2\in C_2}\ \|c_1-c_2\|.
\end{equation*}
The Hausdorff distance is
\begin{equation*}
    d_{\mathrm H}(C_1,C_2)\defeq \max\{e[C_1,C_2],e[C_2,C_1]\}
\end{equation*}
For nonempty closed convex cones $K_1,K_2$, the \emph{truncated Hausdorff distance} is
\begin{equation*}
    \bar d_{\mathrm H}(K_1,K_2)\defeq d_{\mathrm H}(K_1\cap B_1(0),K_2\cap B_1(0)).
\end{equation*}

\begin{definition}[$\left(\varepsilon,\delta\right)$-approximation {\cite[Definition~4.2]{doerfler2022approximation}}]\label{def:eps-delta-approx}
Let $C\subseteq\mathbb{R}^n$ be nonempty, closed, convex, and line-free.
A line-free polyhedron $P\subseteq\mathbb{R}^n$ is an \emph{$\left(\varepsilon,\delta\right)$-approximation} of $C$ if: (i)~$e[\operatorname{vert}P,\,C]\le \varepsilon$, (ii)~$\bar d_{\mathrm H}(0^+P,\,0^+C)\le \delta$, and (iii)~$C\subseteq P$.
\end{definition}

We now collect two results from~\cite{doerfler2022approximation} that we will use.

\begin{theorem}[Existence in Hausdorff distance and convergence of approximations]\label{thm:doerfler-key}
Let $C\subseteq\mathbb{R}^n$ be nonempty and closed.
\begin{enumerate}[label=(\alph*)]
\item\emph{(Hausdorff-approximability)}
If $C$ is convex, $0^+C$ is polyhedral, and $e[C,0^+C]<\infty$, then for every $\varepsilon>0$ there exists a finite set $V$ such that
$d_{\mathrm H}\!\bigl(C,\operatorname{conv}V+0^+C\bigr)\le \varepsilon$~\cite[Theorem~3.1]{doerfler2022approximation}.
In particular, the approximating polyhedra can be chosen with $0^+P=0^+C$.
\item\emph{(Convergence)}
If $C$ is additionally line-free and $P^\nu$ is an $(\varepsilon^\nu,\delta^\nu)$-approximation of $C$ with $(\varepsilon^\nu,\delta^\nu)\to(0,0)$, then $P^\nu\to C$ in the sense of Painlev\'e--Kuratowski~\cite[Theorem~4.9]{doerfler2022approximation}.
\end{enumerate}
\end{theorem}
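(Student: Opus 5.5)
This theorem collects two results from \cite{doerfler2022approximation}. I would not re-derive them from scratch. For each part I would check that our hypotheses match those of the cited theorem, and give a short sketch of why the statement holds.

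\emph{Part (a).} The plan is a compactness argument on a bounded ``core'' of $C$. Since $e[C,0^+C]=:\rho<\infty$, every $c\in C$ lies within $\rho$ of some $k\in 0^+C$. From this I would show that $C = \operatorname{cl}\bigl(C_B + 0^+C\bigr)$ for a suitable bounded set $C_B\subseteq C$. A natural choice is the intersection of $C$ with a large ball $B_R(0)$. Because $0^+C$ is polyhedral it is finitely generated, and so only finitely many directions need to be absorbed.

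Since $C_B$ is bounded, its closure is compact. Hence there is a finite $\varepsilon$-net $V\subseteq C_B$. Convexity of $C$ gives $\operatorname{conv}V+0^+C\subseteq C$, so $e[\operatorname{conv}V+0^+C,\,C]=0$. The reverse excess is at most $\varepsilon$, because each point of $C$ lies within $\varepsilon$ of $V+0^+C$. The set $\operatorname{conv}V+0^+C$ is a polyhedron by Minkowski--Weyl, and its recession cone is exactly $0^+C$, which gives the ``in particular'' clause.

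\emph{Main obstacle in (a).} The delicate step is choosing $R$ so that $C\subseteq C\cap B_R(0)+0^+C$ up to $\varepsilon$. I would handle it by writing $c=k+(c-k)$ with $\|c-k\|\le\rho$. Then I would move along generators of $0^+C$ back toward a bounded region, using polyhedrality to get a uniform bound. This is the content of \cite[Theorem~3.1]{doerfler2022approximation}, which I would invoke directly.

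\emph{Part (b).} I would verify the two Painlev\'e--Kuratowski inclusions, $\limsup P^\nu\subseteq C\subseteq\liminf P^\nu$. The right-hand inclusion is immediate from $C\subseteq P^\nu$ for every $\nu$.

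For the left-hand inclusion, take $x^\nu\in P^\nu$ converging to $x$ along a subsequence. Write $x^\nu=v^\nu+d^\nu$, where $v^\nu\in\operatorname{conv}\operatorname{vert}P^\nu$ and $d^\nu\in 0^+P^\nu$; this decomposition uses line-freeness. Each vertex is within $\varepsilon^\nu$ of $C$, and each normalized direction is within $\delta^\nu$ of $0^+C$. This yields points of $C+0^+C=C$ within $\varepsilon^\nu+\delta^\nu\|d^\nu\|$ of $x^\nu$.

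The obstacle here is bounding $\|d^\nu\|$ uniformly. I expect this to follow from line-freeness of $C$, which makes $0^+C$ pointed, together with $\delta^\nu\to0$. I would then cite \cite[Theorem~4.9]{doerfler2022approximation} for the full argument.
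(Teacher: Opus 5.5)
Your strategy of checking the hypotheses and invoking \cite[Theorem~3.1]{doerfler2022approximation} and \cite[Theorem~4.9]{doerfler2022approximation} is the same as the paper's, which gives no argument beyond these citations. Your sketch of (b) is sound. The uniform bound on $\|d^\nu\|$ does follow from line-freeness, as you expect. Suppose $\|d^\nu\|\to\infty$ along a subsequence. Then $d^\nu/\|d^\nu\|$ accumulates at a unit vector $\bar d\in 0^+C$, because $\delta^\nu\to 0$. At the same time $v^\nu/\|d^\nu\|\to-\bar d$, with $\operatorname{dist}(v^\nu,C)\le\varepsilon^\nu$. Limits of $t_\nu c^\nu$ with $c^\nu\in C$ and $t_\nu\downarrow 0$ lie in $0^+C$, so $-\bar d\in 0^+C$, which contradicts pointedness.

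The sketch of (a), however, contains a step that fails. The representation $C=\operatorname{cl}(C_B+0^+C)$ with $C_B\subseteq C$ bounded is false in general. Take $C=\{(x,y)\in\mathbb{R}^2 : x>0,\ xy\ge 1\}$. It is closed and convex, $0^+C=\mathbb{R}^2_{\ge 0}$ is polyhedral, and $e[C,0^+C]=0$. Yet any $C_B\subseteq C\cap B_R(0)$ satisfies $C_B+0^+C\subseteq\{(x,y) : y\ge 1/R\}$, a closed set that misses $(2R,1/(2R))\in C$.

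So only the $\varepsilon$-version $C\subseteq (C\cap B_R(0))+0^+C+B_\varepsilon(0)$ can hold. Your mechanism for it, retracting $c$ along generators of $0^+C$, breaks down on the same example. Each boundary point $(x,1/x)$ is Pareto-minimal, so $c-d\notin C$ for every nonzero $d\in 0^+C$.

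The missing idea is that the bounded comparison point $c'$ must be chosen independently of $c$. Write $0^+C=\{d : Ad\ge 0\}$ with $A\in\mathbb{R}^{p\times n}$. Finite excess makes every row functional $a_j^\top$ bounded below on $C$. A compactness argument then produces finitely many points $c'\in C$ such that every $c\in C$ admits one with $Ac'\le Ac+\eta\mathbf{1}$. The compactness is applied to the bounded ``moderate'' coordinates of $Ac$, together with a pigeonhole over $p+1$ magnitude scales that separates them from the ``huge'' ones. Hoffman's error bound for $\{d : Ad\ge 0\}$ then gives $\operatorname{dist}(c-c',0^+C)\le\varepsilon$ for $\eta$ small enough.

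This is exactly the nontrivial content of \cite[Theorem~3.1]{doerfler2022approximation}. Your part (a) is therefore justified by the citation, not by the sketch as written.
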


\subsubsection{Specialization to co-design resource queries}\label{subsec:cod-query-specialization}
While the approximation in \Cref{def:eps-delta-approx} is generally described by two parameters, a useful simplification arises for co-design \emph{resource queries}.
Because feasible resource sets are upper sets in a cone-ordered resource space, their recession cone is necessarily equal to the resource cone.
Consequently, for the standard Euclidean resource poset $\resPosetR=\mathbb{R}_{\ge 0}^{n_\resPosetR}$, the recession-cone part of the approximation can be matched \emph{exactly} (i.e., $\delta=0$), and the approximation task reduces to controlling~$\varepsilon$.

Fix a \gls{abk:dp} $\dprb\in\dpOf{\funPosetF}{\resPosetR}$ over Euclidean resource coordinates, and fix a functionality $\barposelxF\in\funPosetF$.
Define the \emph{feasible resource} $\mathcal{R}(\barposelxF)$ set and the corresponding \emph{query graph} $\mathcal{G}_{\barposelxF}$:
\begin{equation*}
\begin{aligned}
\mathcal{R}(\barposelxF)\defeq\{\poselxR\in\resPosetR \mid \tup{\barposelxF,\poselxR}\in\dprb\},\\
\mathcal{G}_{\barposelxF}\defeq \{\tup{\barposelxF,\poselxR}\mid \poselxR\in\mathcal{R}(\barposelxF)\}.
\end{aligned}
\end{equation*}
By the upper-set property of \glspl{abk:dp} (\cref{def:dp}), $\mathcal{R}(\barposelxF)$ is an upper set in the resource poset.

\begin{lemma}[Query-graph recession structure]\label[lemma]{lem:query-graph-recession}
Let $\dprb\in\dpOf{\funPosetF}{\resPosetR}$ have nonempty, closed, convex feasible set in $\funPosetF\op\times\resPosetR$.
Fix $\barposelxF\in\funPosetF$ and assume $\mathcal{R}(\barposelxF)\neq\varnothing$.
Then:
\begin{enumerate}[label=(\alph*)]
\item $0^+\mathcal{G}_{\barposelxF}
= \{0\}^{n_{\funPosetF}}\times 0^+\mathcal{R}(\barposelxF)$.
\item If $\resPosetR=\resPosetR_1\times\cdots\times\resPosetR_k$ is a product of nonempty closed convex cones and $\mathcal{R}(\barposelxF)$ is an upper set in $\resPosetR$, then~$0^+\mathcal{R}(\barposelxF)=0^+\resPosetR=\resPosetR.$
\end{enumerate}
\end{lemma}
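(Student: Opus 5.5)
Both parts follow directly from the definition of the recession cone in \cref{def:recession-selfbounded}. First we note that $\mathcal{G}_{\barposelxF}$ is nonempty, closed and convex. It is the intersection of the closed convex feasible set $\dprb$ with the affine subspace $\{\barposelxF\}\times\mathbb{R}^{n_{\resPosetR}}$. Since $\mathcal{R}(\barposelxF)\neq\varnothing$, this intersection is nonempty. The same reasoning shows that $\mathcal{R}(\barposelxF)$, which is the image of $\mathcal{G}_{\barposelxF}$ under the isometric coordinate map $\tup{\barposelxF,\poselxR}\mapsto\poselxR$, is nonempty, closed and convex. So both recession cones are well defined.

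For (a), take a direction $d=\tup{d_F,d_R}\in 0^+\mathcal{G}_{\barposelxF}$ and any point $\tup{\barposelxF,\poselxR}$ of the graph. Then $\tup{\barposelxF+td_F,\poselxR+td_R}$ lies in $\mathcal{G}_{\barposelxF}$ for all $t\ge 0$. Every point of the graph has first block equal to $\barposelxF$, which forces $d_F=0$. The condition then reads $\poselxR+td_R\in\mathcal{R}(\barposelxF)$ for all $\poselxR\in\mathcal{R}(\barposelxF)$ and all $t\ge 0$, that is, $d_R\in 0^+\mathcal{R}(\barposelxF)$. Conversely, if $d_R\in 0^+\mathcal{R}(\barposelxF)$, then $\tup{0,d_R}$ is a recession direction of the graph by the same computation. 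This proves the set equality.

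For (b), we read ``upper set in $\resPosetR$'' in the cone-ordered sense of \cref{def:polyhedral-outer-approx}, namely $\mathcal{R}(\barposelxF)+\resPosetR\subseteq\mathcal{R}(\barposelxF)$. Here $\resPosetR=\prod_i\resPosetR_i$ is itself a closed convex cone, since products of closed convex cones are closed convex cones. We then prove two inclusions.
\begin{itemize}
\item[($\supseteq$)] For $d\in\resPosetR$ and $t\ge0$ we have $td\in\resPosetR$, so $x+td\in\mathcal{R}(\barposelxF)$ by the upper-set property. Hence $\resPosetR\subseteq 0^+\mathcal{R}(\barposelxF)$.
\item[($\subseteq$)] This uses $\mathcal{R}(\barposelxF)\subseteq\resPosetR$, which holds by definition of the feasible resource set. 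It gives $0^+\mathcal{R}(\barposelxF)\subseteq 0^+\resPosetR$: if $x+td\in\mathcal{R}(\barposelxF)\subseteq\resPosetR$ for all $t\ge0$ and a fixed $x$, then $x/t+d\in\resPosetR$. Letting $t\to\infty$ and using closedness and the cone property gives $d\in\resPosetR$.
\end{itemize}
The same argument shows $0^+\resPosetR=\resPosetR$ for a closed convex cone: the inclusion $\resPosetR\subseteq 0^+\resPosetR$ follows from convexity together with the cone property, and the reverse inclusion is the limiting argument just given. Chaining these yields $\resPosetR\subseteq 0^+\mathcal{R}(\barposelxF)\subseteq 0^+\resPosetR=\resPosetR$.

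The main obstacle is the $\subseteq$ direction of (b), since it needs the containment $\mathcal{R}(\barposelxF)\subseteq\resPosetR$. This must be read off from the definition of $\mathcal{R}(\barposelxF)$ as a subset of $\resPosetR$, not from the feasible set of $\dprb$ alone. It also uses the standard fact that for a closed convex set the recession condition may be checked at a single base point $x$.
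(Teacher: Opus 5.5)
Your proof is correct and follows essentially the same route as the paper. In (a) the fixed functionality coordinate forces the functionality part of the direction to vanish. In (b) you get $\resPosetR\subseteq 0^+\mathcal{R}(\barposelxF)$ from the upper-set property, and the reverse inclusion from the scaling argument $d+\poselxR/t\in\resPosetR$ with $t\to\infty$ and closedness of $\resPosetR$; your preliminary check that $\mathcal{G}_{\barposelxF}$ and $\mathcal{R}(\barposelxF)$ are nonempty, closed and convex (so that \cref{def:recession-selfbounded} applies) is a small addition the paper leaves implicit.
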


\begin{proof}
\textbf{(a)}
By definition,~$\mathcal{G}_{\barposelxF}$ fixes the functionality coordinates at $\barposelxF$.
A direction~$\tup{d_{\F{F}},d_{\R{R}}}$ belongs to $0^+\mathcal{G}_{\barposelxF}$ if and only if for all $t\ge 0$,~$\tup{\barposelxF,\poselxR}+t\tup{d_{\F{F}},d_{\R{R}}}\in \mathcal{G}_{\barposelxF}$, which forces $\barposelxF+td_{\F{F}}=\barposelxF$ for all $t\ge 0$ and hence $d_{\F{F}}=0$.
The resource direction $d_{\R{R}}$ is then feasible for all $t$ exactly when $d_{\R{R}}\in 0^+\mathcal{R}(\barposelxF)$.
This proves the identity.

\textbf{(b)}
We show both inclusions.

\emph{(i) $\,\resPosetR\subseteq 0^+\mathcal{R}(\barposelxF)$.}
Let $d\in\resPosetR$ and $\poselxR\in \mathcal{R}(\barposelxF)$.
Since $\resPosetR$ is a cone, $td\in\resPosetR$ for all $t\ge 0$ and $\poselxR+td\succeq \poselxR$.
Because $\mathcal{R}(\barposelxF)$ is an upper set in $\resPosetR$, $\poselxR+td\in \mathcal{R}(\barposelxF)$ for all $t\ge 0$.
Hence $d\in 0^+\mathcal{R}(\barposelxF)$.

\emph{(ii) $\,0^+\mathcal{R}(\barposelxF)\subseteq \resPosetR$.}
Let $d\in 0^+\mathcal{R}(\barposelxF)$ and pick any $\poselxR\in\mathcal{R}(\barposelxF)\subseteq \resPosetR$.
Then $\poselxR+td\in \mathcal{R}(\barposelxF)\subseteq \resPosetR$ for all $t\ge 0$.
Since $\resPosetR$ is a cone, for every $t>0$ we have~$(\poselxR+td)/t=d+\poselxR/t \in \resPosetR.$
Letting $t\to\infty$ and using closedness of $\resPosetR$ yields $d\in \resPosetR$.

Finally, since $\resPosetR$ is a closed convex cone, $0^+\resPosetR=\resPosetR$.
\end{proof}

\begin{lemma}[Recession cone of an upper feasible resource set]\label[lemma]{lem:upper-set-recession}
Let $K\subseteq\mathbb{R}^n$ be a nonempty closed convex cone ordered by $x\preceq_K y \Leftrightarrow y-x\in K$.
Let $C\subseteq K$ be nonempty and an upper set in $K$ (i.e., $C+K\subseteq C$).
Then $0^+C = K$.
\end{lemma}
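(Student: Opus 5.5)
We prove the two inclusions $K\subseteq 0^+C$ and $0^+C\subseteq K$ directly, mirroring part (b) of \cref{lem:query-graph-recession}. The only hypotheses used are that $C\subseteq K$, that $C+K\subseteq C$, and that $K$ is a closed cone. Convexity of $K$ enters only through the fact that $\preceq_K$ is a genuine preorder, so that ``upper set'' makes sense.

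\emph{Step 1: $K\subseteq 0^+C$.} Fix $d\in K$ and $x\in C$. Since $K$ is a cone, $td\in K$ for all $t\ge 0$. The upper-set property $C+K\subseteq C$ then gives $x+td\in C$. Because $x\in C$ and $t\ge 0$ were arbitrary, $d$ satisfies the defining condition of $0^+C$ in \cref{def:recession-selfbounded}.

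\emph{Step 2: $0^+C\subseteq K$.} Fix $d\in 0^+C$ and choose any $x\in C$, which exists since $C$ is nonempty. For every $t>0$ we have $x+td\in C\subseteq K$. Scaling by $1/t$ and using that $K$ is a cone gives
\[
d+\tfrac{1}{t}x\in K \quad\text{for all } t>0.
\]
Letting $t\to\infty$, the points $d+x/t$ converge to $d$. Since $K$ is closed, $d\in K$.

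\emph{Main obstacle.} The only delicate step is the limit in Step 2, which is where closedness of $K$ is essential. Note that we never need closedness or convexity of $C$ itself. The definition of $0^+C$ in \cref{def:recession-selfbounded} was stated for closed convex $C$, but the set-theoretic definition makes sense for any nonempty $C$, and both inclusions above use only that definition. So, if desired, one can remark that the conclusion holds without assuming $C$ is closed or convex. Combining Steps 1 and 2 yields $0^+C=K$.
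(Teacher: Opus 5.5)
Your proof is correct and follows the paper's argument essentially line for line: $K\subseteq 0^+C$ because $x+td\in x+K\subseteq C$, and $0^+C\subseteq K$ by scaling $x+td\in K$ by $1/t$, which gives $d+x/t\in K$, then letting $t\to\infty$ and using that $K$ is closed. Your side remark is also accurate: closedness and convexity of $C$ are never used, and the lemma as stated does not assume them.
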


\begin{proof}
This is exactly \cref{lem:query-graph-recession}(b) with $\resPosetR=K$ and $C=\mathcal{R}(\barposelxF)$, but we repeat the short argument for completeness.
First, $K\subseteq 0^+C$: for any $d\in K$ and $x\in C$, $x+td\in x+K\subseteq C$ for all $t\ge 0$.
For the reverse inclusion, let $d\in 0^+C$ and choose $x\in C\subseteq K$.
Then $x+td\in C\subseteq K$ for all $t\ge 0$.
Since $K$ is a cone, $d+x/t\in K$ for all $t>0$; letting $t\to\infty$ and using closedness gives $d\in K$.
\end{proof}

\begin{theorem}[Natural \texorpdfstring{$\delta=0$}{delta=0} guarantee for convex co-design resource queries]\label{thm:epsilon-linear-approx}
Assume the system resource poset is the standard Euclidean cone $\resPosetR=\mathbb{R}_{\ge 0}^{n_\resPosetR}$.
Fix $\barposelxF\in\funPosetF$ and let $C\defeq \mathcal{R}(\barposelxF)$ be the feasible resource set, assumed nonempty, closed, and convex.
Then:
\begin{enumerate}[label=(\alph*)]
\item $0^+C=\mathbb{R}_{\ge 0}^{n_\resPosetR}$ and $C$ is line-free and self-bounded.
\item For every $\varepsilon>0$ there exists a polyhedron $P\supseteq C$ with $0^+P=0^+C$ such that $P$ is an $(\varepsilon,0)$-approximation of $C$ in the sense of \cref{def:eps-delta-approx}.
\item Any sequence of $(\varepsilon^\nu,0)$-approximations with $\varepsilon^\nu\to 0$ converges to $C$ in the sense of Painlev\'e--Kuratowski.
\end{enumerate}
\end{theorem}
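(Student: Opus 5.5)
Part (a) is a direct application of \cref{lem:upper-set-recession} with $K=\mathbb{R}_{\ge 0}^{n_\resPosetR}$. The standing assumption is that $C$ is nonempty, and the upper-set property of \glspl{abk:dp} (\cref{def:dp}) gives $C+\mathbb{R}_{\ge 0}^{n_\resPosetR}\subseteq C$. Together these yield $0^+C=\mathbb{R}_{\ge 0}^{n_\resPosetR}$.

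Line-freeness then follows because $\mathbb{R}_{\ge 0}^{n}\cap(-\mathbb{R}_{\ge 0}^{n})=\{0\}$. For self-boundedness (note $0^+C\neq\{0\}$ whenever $n_\resPosetR\ge1$), I take $y=0$. Indeed, $C\subseteq\resPosetR=\mathbb{R}_{\ge 0}^{n_\resPosetR}=0+0^+C$.

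For part (b), I invoke \cref{thm:doerfler-key}(a). Its hypotheses are checked as follows:
\begin{itemize}
\item $C$ is closed and convex by assumption.
\item $0^+C=\mathbb{R}_{\ge 0}^{n_\resPosetR}$ is polyhedral, with H-representation $\{x: Ix\ge 0\}$.
\item $e[C,0^+C]<\infty$; in fact it is zero, since $C\subseteq 0^+C$ by self-boundedness with $y=0$.
\end{itemize}
This produces a finite $V$ with $d_{\mathrm H}(C,\operatorname{conv}V+0^+C)\le\varepsilon$.

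The main obstacle is that this Hausdorff approximant need not contain $C$, while \cref{def:eps-delta-approx}(iii) requires $C\subseteq P$. I would fix this by shifting or enlarging: set $P\defeq\operatorname{conv}V+0^+C+\varepsilon B$. Since $\varepsilon B$ is not polyhedral, I would replace it by the box $[-\varepsilon,\varepsilon]^{n}$, which is. Then $e[C,\operatorname{conv}V+0^+C]\le\varepsilon$ gives $C\subseteq P$.

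The vertices of $P$ are of the form $v+w$ with $v\in V$ and $w$ a box corner. So first apply the theorem with $\varepsilon'=\varepsilon/(1+\sqrt n)$. Each $v\in V$ lies within $\varepsilon'$ of $C$, because $e[\operatorname{conv}V+0^+C,C]\le\varepsilon'$. Each box corner has norm at most $\sqrt n\,\varepsilon'$. Hence $e[\operatorname{vert}P,C]\le(1+\sqrt n)\varepsilon'=\varepsilon$.

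Adding a bounded polytope does not change the recession cone, so $0^+P=\mathbb{R}_{\ge 0}^{n}=0^+C$. This gives $\bar d_{\mathrm H}(0^+P,0^+C)=0$. Finally, $P$ is line-free because its recession cone is pointed, and it is a polyhedron as a Minkowski sum of a polytope and a polyhedral cone.

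An alternative route, if the sum bookkeeping is awkward, is to build $P$ directly as an intersection of finitely many supporting halfspaces of $C$ whose normals lie in $\mathbb{R}_{\ge0}^n$. The recession cone is then controlled by including the coordinate constraints $x\ge0$. I would keep the Minkowski-sum approach as primary.

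Part (c) is then immediate from \cref{thm:doerfler-key}(b). By (a), $C$ is nonempty, closed, convex and line-free, and $(\varepsilon^\nu,0)\to(0,0)$, so $P^\nu\to C$ in the Painlev\'e--Kuratowski sense.
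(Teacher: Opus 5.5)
Your arguments for (a) and (c) are the paper's own: \cref{lem:upper-set-recession} plus pointedness of $\mathbb{R}^{n}_{\ge 0}$ for (a), and \cref{thm:doerfler-key}(b) for (c). For (b) you start from the same ingredient, \cref{thm:doerfler-key}(a) with $e[C,0^+C]=0$.

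Your enlargement step is where you differ, and it is needed. The paper's proof takes $P=\operatorname{conv}V+0^+C$ directly and checks only conditions (i) and (ii) of \cref{def:eps-delta-approx}. It never verifies (iii), $C\subseteq P$, which a Hausdorff approximant need not satisfy: for instance, if $V\subseteq C$ then $P\subseteq C$. Your version closes this gap, and the bookkeeping checks out:
\begin{itemize}
\item Since $\operatorname{conv}V+0^+C$ is closed, the infimum in $e[C,\cdot]$ is attained, which is what gives containment in the closed box.
\item Every vertex of $P$ has the form $v+w$.
\item The recession cone is unchanged under addition of a polytope.
\end{itemize}

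A simplification: $[-\varepsilon',\varepsilon']^{n}+\mathbb{R}^{n}_{\ge 0}=-\varepsilon'\mathbf{1}+\mathbb{R}^{n}_{\ge 0}$, so your $P$ is just the translate $\operatorname{conv}V-\varepsilon'\mathbf{1}+\mathbb{R}^{n}_{\ge 0}$. Its vertices are exactly the points $v-\varepsilon'\mathbf{1}$. Containment is then the mirror image of the inflation argument in \cref{prop:epsilon-inflation-feasible}: $\|c-q\|\le\varepsilon'$ gives $c\succeq q-\varepsilon'\mathbf{1}$. Once you rescale, make sure the box has radius $\varepsilon'$, not $\varepsilon$.
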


\begin{proof}
\textbf{(a)}
Since $C$ is an upper set in $\mathbb{R}_{\ge 0}^{n_\resPosetR}$, \cref{lem:upper-set-recession} yields $0^+C=\mathbb{R}_{\ge 0}^{n_\resPosetR}$.
The cone $\mathbb{R}_{\ge 0}^{n_\resPosetR}$ is pointed, hence $0^+C\cap(-0^+C)=\{0\}$ and $C$ is line-free.
Self-boundedness holds because $C\subseteq \mathbb{R}_{\ge 0}^{n_\resPosetR}=0+0^+C$.

\textbf{(b)}
We apply \cref{thm:doerfler-key}(a).
Here $0^+C=\mathbb{R}_{\ge 0}^{n_\resPosetR}$ is polyhedral.
Moreover, since $C\subseteq 0^+C$, we have $e[C,0^+C]=0<\infty$.
Hence for any $\varepsilon>0$ there exists a polyhedron $P=\operatorname{conv}V+0^+C$ with
$d_{\mathrm H}(C,P)\le \varepsilon$ and $0^+P=0^+C$.
Then $e[\operatorname{vert}P,C]\le e[P,C]\le d_{\mathrm H}(C,P)\le \varepsilon$, so $P$ is an $(\varepsilon,0)$-approximation.

\textbf{(c)}
This is \cref{thm:doerfler-key}(b) with $\delta^\nu=0$.
\end{proof}

\begin{lemma}[From $\varepsilon$-outer approximation to guaranteed feasibility by inflation]\label[proposition]{prop:epsilon-inflation-feasible}
Let $C\subseteq \mathbb{R}_{\ge 0}^n$ be a nonempty upper set.
\begin{enumerate}[label=(\alph*)]
\item If $P\supseteq C$ satisfies $e[P,C]\le \varepsilon$, then every $p\in P$ admits a certified-feasible inflated point~$p^{\mathrm{safe}}\defeq p+\varepsilon\,\mathbf{1}\ \in\ C.$

\item If $P\supseteq C$ satisfies only $e[\operatorname{vert}P,C]\le \varepsilon$, then the same conclusion holds for every \emph{vertex} $v\in\operatorname{vert}P$:
$v+\varepsilon\mathbf{1}\in C$.
\end{enumerate}
\end{lemma}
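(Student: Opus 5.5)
The plan is a direct coordinatewise argument that combines the Euclidean-ball bound from the excess with the upper-set property of $C$ in $\mathbb{R}_{\ge 0}^n$, which is the same property used in \cref{lem:upper-set-recession}. Part~(b) then follows from the same argument restricted to vertices.

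For~(a), fix $p\in P$. Since $e[P,C]\le\varepsilon$, we have $\inf_{c\in C}\|p-c\|\le\varepsilon$. I first pick a nearest point $c\in C$ with $\|p-c\|\le\varepsilon$. The key observation is that the Euclidean ball is contained in the $\ell_\infty$ ball: $|p_i-c_i|\le\|p-c\|\le\varepsilon$ for every coordinate $i$. Hence $c_i\le p_i+\varepsilon$ for all $i$, i.e.\ $c\preceq p+\varepsilon\mathbf{1}$ componentwise. Moreover $p+\varepsilon\mathbf{1}\succeq c\succeq 0$, so $p+\varepsilon\mathbf{1}\in\mathbb{R}_{\ge 0}^n$; this holds even if $P$ is not contained in the orthant. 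Since $C$ is an upper set in $\mathbb{R}_{\ge 0}^n$, we get $p+\varepsilon\mathbf{1}\in C$.

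For~(b), I repeat the same argument verbatim with $p$ replaced by a vertex $v\in\operatorname{vert}P$. This uses only $\inf_{c\in C}\|v-c\|\le e[\operatorname{vert}P,C]\le\varepsilon$. Nothing beyond the vertex bound is needed, because the argument is pointwise.

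The main obstacle is the attainment of the infimum. Two cases arise.
\begin{itemize}
\item If the infimum is strictly below $\varepsilon$, some $c$ with $\|p-c\|\le\varepsilon$ exists and the argument goes through.
\item If the infimum equals $\varepsilon$, I would use that $C$ is closed, as it is in the setting of \cref{thm:epsilon-linear-approx}. Being nonempty and closed, $C$ attains the distance, so a minimizer $c$ exists.
\end{itemize}
Without closedness, the argument still gives $p+(\varepsilon+\eta)\mathbf{1}\in C$ for every $\eta>0$. Passing to the limit $\eta\to0$ would again require closedness. I would therefore state explicitly that $C$ is taken closed, consistent with the convex-DP standing assumption of this section.
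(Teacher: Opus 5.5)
Your argument is the same as the paper's: bound each coordinate by the Euclidean distance to get $c\preceq p+\varepsilon\mathbf{1}$, invoke the upper-set property, and obtain (b) by restricting the same argument to vertices. Your extra care about attainment of the infimum is justified. The paper simply asserts that some $c\in C$ with $\|p-c\|\le\varepsilon$ exists, but without closedness the statement fails: take $C=(1,\infty)\subseteq\mathbb{R}_{\ge 0}$, $P=[0,\infty)$, $\varepsilon=1$, $p=0$. Your added closedness assumption is therefore the right repair.
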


\begin{proof}
We prove (a); (b) is the same argument restricted to $p=v\in\operatorname{vert}P$.
Fix $p\in P$.
By $e[P,C]\le\varepsilon$ there exists $c\in C$ such that $\|p-c\|\le \varepsilon$.
Hence $|p_i-c_i|\le \varepsilon$ for each coordinate $i$, so $c\preceq p+\varepsilon\mathbf{1}$ componentwise.
Since $C$ is an upper set, $p+\varepsilon\mathbf{1}\in C$.
\end{proof}

\begin{remark}[Practical implications for \gls{abk:ldp} pipeline]\label{rem:eps-approx-implications}
\Cref{thm:epsilon-linear-approx} explains why, for standard nonnegative resource coordinates, the recession-cone part of~$\left(\varepsilon,\delta\right)$-accuracy can be made exact ($\delta=0$): the unbounded directions of every convex query-feasible resource set are fixed by monotonicity and equal $\mathbb{R}_{\ge 0}^{n_\resPosetR}$.
Thus accuracy control reduces to the bounded-part error~$\varepsilon$.

Operationally, one can build polyhedral outer approximations by adding supporting halfspaces (e.g., tangent planes from subgradients of convex constraints), yielding an approximate \gls{abk:ldp} that can be queried via \gls{abk:molp}.
The obtained resource vectors are optimistic in general, but \cref{prop:epsilon-inflation-feasible} shows how an $\varepsilon$ bound can be converted into a simple, explicit feasibility margin.
In the rover case study (\cref{subsec:rover}), the linearization parameter $N$ (number of tangent-plane cuts) controls the tightness of the outer approximation; empirically $\varepsilon$ decreases as $N$ grows, and the limiting behavior is captured by \cref{thm:epsilon-linear-approx}(c).
\end{remark}

\subsubsection{Approximation under co-design composition}

So far, we have discussed how a single convex DP can be approximated by an \gls{abk:ldp}.
We now extend this idea to co-design problems involving multiple convex design problems.
Replacing each convex component feasible set by a polyhedral outer approximation yields a polyhedral DP, i.e., an \gls{abk:ldp}.
Importantly, {outer approximations propagate through co-design composition}.

\begin{lemma}[Outer approximations are preserved by composition]\label{lem:outer-approx-composition}
Consider a finite co-design interconnection built from classic operations (\cref{def:dp-interconnections}).
Let $\{C_d\}_{d\in\mathcal V}$ be the true component feasible sets and $\{P_d\}_{d\in\mathcal V}$ polyhedral outer approximations with $C_d\subseteq P_d$ for all $d$.
Let $C_{\mathrm{tot}}$ and $P_{\mathrm{tot}}$ be the induced system-level feasible sets obtained by composing $\{C_d\}$ and $\{P_d\}$ along the same interconnection graph.
Then $C_{\mathrm{tot}}\subseteq P_{\mathrm{tot}}$.
Moreover, if each $P_d$ is an \emph{upper-set} outer approximation with respect to the corresponding resource cone, then $P_{\mathrm{tot}}$ is an upper-set outer approximation of $C_{\mathrm{tot}}$ with respect to the system resource cone.
\end{lemma}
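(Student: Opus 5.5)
The plan is structural induction over the expression that builds the interconnection from the four operations of \cref{def:dp-interconnections}, plus the splitter/merger \glspl{abk:dp} used in the proof of \cref{thm:lcdp-is-ldp}. Each interconnection is a finite term whose leaves are component feasible sets. It therefore suffices to show two things: each operation is monotone with respect to set inclusion in its arguments, and each operation preserves the upper-set property.

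\textbf{Inclusion.} Suppose $C_a\subseteq P_a$ and $C_b\subseteq P_b$ (primes in the parallel case). Each operation is monotone in its arguments.
\begin{itemize}
\item \emph{Series.} If $\tup{x_P,x_R}\in C_a\mthen C_b$, then the same witness $x_Q$ satisfies $\tup{x_P,x_Q}\in P_a$ and $\tup{x_Q,x_R}\in P_b$, so $\tup{x_P,x_R}\in P_a\mthen P_b$.
\item \emph{Parallel.} $C_a\times C_a'\subseteq P_a\times P_a'$.
\item \emph{Intersection.} $C_a\cap C_b\subseteq P_a\cap P_b$.
\item \emph{Trace.} The existential witness $x_R$ for $C$ also serves for $P$.
\end{itemize}
The splitter/merger leaves are identical in both systems. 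Induction on the term depth then gives $C_{\mathrm{tot}}\subseteq P_{\mathrm{tot}}$. Equivalently, the global encoding of \cref{prop:lcdp-global-polyhedron} shows this in one step: $\prod_d C_d\cap\{Ez=0\}\subseteq\prod_d P_d\cap\{Ez=0\}$, and projection preserves inclusion. By \cref{lem:ldp-composition}, $P_{\mathrm{tot}}$ is a polyhedron.

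\textbf{Upper-set property.} Here I read the hypothesis as requiring each $P_d$ to be an upper set of $\funPosetF_d\op\times\resPosetR_d$, that is, closed under decreasing functionalities and increasing resources. This is the \gls{abk:dp} condition of \cref{def:dp}, and it matches \cref{def:polyhedral-outer-approx} with the cone $(-\mathbb{R}_{\ge0}^{n_\funPosetF})\times K_\resPosetR$. Under that reading, the claim follows from the standard fact that the four operations send \glspl{abk:dp} to \glspl{abk:dp}, and I will check each case directly.
\begin{itemize}
\item \emph{Series.} Take $\tup{x_P,x_R}$ with witness $x_Q$, and let $x_P'\preceq x_P$ and $x_R'\succeq x_R$. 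Then $\tup{x_P',x_Q}\in P_a$ by upward closure of $P_a$ in $\posetP\op$, and $\tup{x_Q,x_R'}\in P_b$ by upward closure in $\posetR$. So the same witness still works.
\item \emph{Parallel.} The product of upper sets is an upper set in the product order.
\item \emph{Intersection.} An intersection of upper sets is an upper set.
\item \emph{Trace.} Keep the witness $x_R$ and move only the external coordinates. This uses the upward closure of $P$ coordinatewise, since the product order allows moving the $\posetP$ and $\posetQ$ components while fixing the $\posetR$ components.
\end{itemize}
The splitters and mergers are visibly upper sets.

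\textbf{Restriction to a fixed functionality.} The last step is the step I expect to need the most care. \Cref{def:polyhedral-outer-approx} phrases the upper-set condition as $P+K\subseteq P$ for a cone $K$. The system-level statement, by contrast, concerns the system resource cone, possibly after fixing $\barposelxF$. So I would note that fixing $\barposelxF$ commutes with inclusion: $\mathcal{R}_C(\barposelxF)\subseteq\mathcal{R}_P(\barposelxF)$. Upward closure in the resource coordinates gives $\mathcal{R}_P(\barposelxF)+\resPosetR\subseteq\mathcal{R}_P(\barposelxF)$.

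The main obstacle is interpretive rather than technical. If the hypothesis only asked $P_d+K_{\resPosetR_d}\subseteq P_d$ in the resource coordinates, without closure under decreasing functionality, then the series case would break. An internal resource $x_Q$ of $a$ is a functionality of $b$, so increasing the system resource does not propagate backwards, while decreasing $x_Q$ would need functional downward closure of $P_b$. I therefore commit to the \gls{abk:dp}-style reading. If needed, I would ensure it by replacing each $P_d$ with $P_d+\bigl((-\mathbb{R}_{\ge0}^{n_{\funPosetF_d}})\times\mathbb{R}_{\ge0}^{n_{\resPosetR_d}}\bigr)$, which is still a polyhedron, still contains $C_d$ because $C_d$ is a \gls{abk:dp}, and is an upper set.
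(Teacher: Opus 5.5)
Your inclusion argument is correct and is essentially the paper's. The paper factors each interconnection into a Cartesian product, an intersection with the wiring subspace, and a projection, and notes that each is monotone under $\subseteq$. You instead verify the four operations directly with the same-witness argument, and your global-encoding remark is literally the paper's route.

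\textbf{The gap is in the upper-set part.} There you prove the lemma only under a strictly stronger hypothesis than the stated one, and your reason for strengthening it is wrong. The statement assumes only closure under the \emph{resource} cone, $P_d+(\{0\}\times K_{\resPosetR_d})\subseteq P_d$. You claim that under this hypothesis the series case breaks. It does not.
\begin{itemize}
\item \emph{Series.} Let $\tup{x_P,x_R}\in P_a\mthen P_b$ with witness $x_Q$, and let $d$ lie in the resource cone of $\posetR$. Resource closure of $P_b$ alone gives $\tup{x_Q,x_R+d}\in P_b$, while $\tup{x_P,x_Q}\in P_a$ is untouched. So the same witness certifies $\tup{x_P,x_R+d}\in P_a\mthen P_b$. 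Nothing has to propagate backwards through $x_Q$; this is just your own series check with $x_P'=x_P$.
\item \emph{Trace.} For $d$ in the resource cone of $\posetQ$, the direction $\tup{\tup{0,0},\tup{d,0}}$ has resource part $\tup{d,0}$. This lies in the resource cone of $\posetQ\times\posetR$, since $0$ belongs to the cone of $\posetR$. So the witness $x_R$ can be kept.
\item \emph{Remaining operations.} Parallel, intersection, and the splitter/merger leaves are immediate.
\end{itemize}
Hence the lemma holds as stated, and your write-up covers only the special case of DP-style closed $P_d$. Your fallback of replacing $P_d$ by $P_d+\bigl((-\mathbb{R}_{\ge0}^{n_{\funPosetF_d}})\times\mathbb{R}_{\ge0}^{n_{\resPosetR_d}}\bigr)$ does not repair this. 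It changes the approximations and therefore $P_{\mathrm{tot}}$, so it says nothing about the $P_{\mathrm{tot}}$ built from the given $P_d$.

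The obstruction you sensed does exist, but only in the lifted space. In $\prod_d P_d\cap\{Ez=0\}$, adding a resource direction on the wired coordinate $x_Q$ of $a$ leaves the wiring subspace. One must therefore use the product cone intersected with $\ker E$, not the full product cone. The paper's one-line justification that ``Minkowski addition commutes with affine subspaces'' passes over this point. External resource ports are never wired, so system resource directions survive that intersection and the projection, and the conclusion stands. Your witness-based formulation sidesteps the issue entirely. Once you restore the stated hypothesis, it is the cleaner of the two arguments.
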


\begin{proof}
At the feasible-set level, each of the three interconnections is a combination of:
(i) Cartesian products,
(ii) intersection with a linear subspace (wiring equalities), and
(iii) linear projection (elimination of internal coordinates); see \cref{rem:composition-set-operations}.
Each of these operations is monotone with respect to set inclusion:
if $A\subseteq B$ then $A\times D\subseteq B\times D$, $A\cap S\subseteq B\cap S$, and $\Pi(A)\subseteq \Pi(B)$.
Applying these facts along a finite composition sequence yields $C_{\mathrm{tot}}\subseteq P_{\mathrm{tot}}$.

For the upper-set statement, note that if each component approximation satisfies~$P_d+K_d\subseteq P_d$ for its cone~$K_d$, then Cartesian products preserve this property with product cones, intersections with linear subspaces (since Minkowski addition commutes with affine subspaces), and projections since~$\Pi(P+K)=\Pi(P)+\Pi(K)$ and $\Pi(K)$ is the projected cone governing the remaining resource coordinates.
\end{proof}

\begin{remark}[Outer vs.\ inner approximation and conservativeness]
An outer approximation $P\supseteq C$ is conservative for \emph{infeasibility certification} (if $r\notin P$ then certainly $r\notin C$), but it is generally \emph{optimistic} for \emph{minimization} queries (minimizing over $P$ can return resources that are not feasible for $C$).
The~$\left(\varepsilon,\delta\right)$ framework below quantifies this optimism; additionally, for upper sets one can convert an $\varepsilon$-accurate outer point into a guaranteed-feasible point by a simple inflation margin (see \cref{prop:epsilon-inflation-feasible}).
\end{remark}

\subsubsection{Recession-cone calculus for co-design interconnections}\label{subsec:recession-calculus}

The feasible-set realizations of parallel/series/feedback interconnections are built from Cartesian products, affine intersections (wiring equalities), and coordinate projections; see \cref{rem:composition-set-operations}.
The following lemma records the corresponding recession-cone calculus with explicit proofs.
A key subtlety is that, for general convex sets, \emph{projection can enlarge the recession cone}; hence we state the strongest always-true inclusion and, separately, the equality that holds for polyhedra.

\begin{lemma}[Recession cones under product, affine intersection, and projection]\label{lem:composition-recession-polyhedral}
Let $C_1,C_2\subseteq\mathbb{R}^n$ be nonempty, closed, and convex.
Then:
\begin{enumerate}[label=(\roman*)]
    \item $0^+(C_1\times C_2)=0^+C_1\times 0^+C_2$.
    \item If $S \defeq \{x\mid Ax=b\}$ and $C\cap S\neq\varnothing$, then~$0^+(C\cap S)=0^+C\cap \ker A.$
    \item For any linear map $L$, one always has~$L(0^+C)\subseteq 0^+(L(C)).$
    If, in addition,~$C$ is a polyhedron, then~$0^+(L(C)) = L(0^+C)$, and in particular~$0^+(L(C))$ is polyhedral whenever~$0^+C$ is polyhedral.
\end{enumerate}
\end{lemma}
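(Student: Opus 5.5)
The three parts are handled separately, and each works directly from the definition of the recession cone in \cref{def:recession-selfbounded}. We use the standard fact that for a nonempty closed convex set $C$, $d\in 0^+C$ if and only if there is \emph{some} $x_0\in C$ with $x_0+td\in C$ for all $t\ge0$. This follows by taking limits of convex combinations $(1-\lambda)x+\lambda(x_0+td/\lambda)$ as $\lambda\downarrow0$, using closedness. With this fact, membership can be tested at a single base point in each part.

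For (i), the product $C_1\times C_2$ is nonempty, closed and convex. A direction $(d_1,d_2)$ is a recession direction if and only if $(x_1+td_1,x_2+td_2)\in C_1\times C_2$ for all $t\ge0$ and all $(x_1,x_2)$. This splits coordinatewise into $d_1\in0^+C_1$ and $d_2\in0^+C_2$, which gives both inclusions at once.

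For (ii), $S$ is a closed affine set and $0^+S=\ker A$, since $A(x+td)=b$ for all $t$ if and only if $Ad=0$. For $C\cap S\neq\varnothing$ the inclusion $0^+C\cap\ker A\subseteq 0^+(C\cap S)$ is immediate from the definition. For the reverse inclusion, take $d\in 0^+(C\cap S)$ and a base point $x_0\in C\cap S$. Then $x_0+td\in C$ and $x_0+td\in S$ for all $t\ge0$. The single-base-point fact applied to $C$ gives $d\in 0^+C$, and $A(x_0+td)=b$ for all $t$ gives $Ad=0$.

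For (iii), the general inclusion follows by taking $d\in0^+C$ and any $y=Lx\in L(C)$. Then $y+tLd=L(x+td)\in L(C)$, so $Ld$ is a recession direction of $L(C)$. Here I would use the definition directly on $L(C)$, which need not be closed, so no closedness of $L(C)$ is required for this direction.

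For the polyhedral equality, write $C=\operatorname{conv}\{v^i\}+\operatorname{cone}\{r^j\}$ by Minkowski--Weyl, so that $0^+C=\operatorname{cone}\{r^j\}$. Then $L(C)=\operatorname{conv}\{Lv^i\}+\operatorname{cone}\{Lr^j\}$ is a polyhedron with recession cone $\operatorname{cone}\{Lr^j\}=L(0^+C)$. This step needs the standard fact that the recession cone of $\operatorname{conv}V+\operatorname{cone}R$ with $V$ finite and nonempty is exactly $\operatorname{cone}R$. The inclusion $\supseteq$ is clear. For $\subseteq$, if $w+td$ stays in the set for all $t\ge0$, then dividing by $t$ and letting $t\to\infty$ puts $d$ in the closure of $\operatorname{cone}R$ (the bounded part vanishes). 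This closure equals $\operatorname{cone}R$ because finitely generated cones are closed. Polyhedrality of $0^+(L(C))$ is then immediate, since it is finitely generated.

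The main obstacle is this last step. It requires closedness of finitely generated cones and care that the argument does not tacitly assume $L(C)$ closed in the general convex case. That is why I would keep the general statement as an inclusion only and use the V-representation for the equality.
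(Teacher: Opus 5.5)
Your proof is correct. Parts (i), (ii) and the general inclusion in (iii) match the paper's argument. The paper also first proves the single-base-point characterization of $0^+D$ via $(1-\tfrac1n)x+\tfrac1n(\bar x+ntd)$, which is your construction with $\lambda=1/n$. It then splits (i) coordinatewise and tests (ii) at one point of $C\cap S$.

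The genuine difference is the polyhedral equality in (iii). The paper stays in $\mathbf{H}$-form, writing $C=\{x\mid Gx\ge h\}$ and $0^+C=\{d\mid Gd\ge 0\}$. It then asserts, ``by polyhedrality'' and by pointing to the lifted polyhedron $\{(x,y)\mid Gx\ge h,\ y=Lx\}$, that $w\in 0^+(L(C))$ iff $w=Ld$ for some $d$ with $Gd\ge0$. But commuting the recession cone with the projection $(x,y)\mapsto y$ is exactly the claim, so that route needs an extra step. For example, take $x_n\in C$ with $Lx_n=Lx_0+nw$. The directions $d_n=(x_n-x_0)/n$ satisfy $Ld_n=w$ and $Gd_n\ge(h-Gx_0)/n$, and a Farkas alternative then shows $\{d\mid Ld=w,\ Gd\ge 0\}\neq\varnothing$.

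Your $\mathbf{V}$-route avoids duality altogether. Since $L$ commutes with $\operatorname{conv}$ and $\operatorname{cone}$, everything reduces to Minkowski--Weyl plus closedness of finitely generated cones, and polyhedrality of $0^+(L(C))$ follows immediately. The paper's route, once completed, works directly with the $\mathbf{H}$-data $(G,h)$ that the rest of the paper manipulates and never needs the $\mathbf{H}\to\mathbf{V}$ conversion.

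One thing you should make explicit: you need Minkowski--Weyl in its general form, with $V$ and $R$ arbitrary finite sets and lineality directions $\pm\ell$ placed in $R$. The vertex/extreme-ray form in the paper's definition exists only for pointed polyhedra.
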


\begin{proof}
We repeatedly use the following characterization valid for nonempty closed convex sets $D$:
\begin{equation}\label{eq:one-point-recession}
d\in 0^+D
\ \Longleftrightarrow\
\exists \bar x\in D \ \text{s.t.}\ \bar x+td\in D\ \ \forall t\ge 0.
\end{equation}
The implication ``$\Rightarrow$'' is immediate.
For ``$\Leftarrow$'', fix $x\in D$ and $t\ge 0$.
Since $\bar x+ntd\in D$ for all $n\in\mathbb{N}$ and $D$ is convex,
\[
x_n\defeq \Bigl(1-\frac1n\Bigr)x+\frac1n(\bar x+ntd)
=x+td+\frac{\bar x-x}{n}\in D.
\]
By closedness, $x_n\to x+td$ implies $x+td\in D$, hence $d\in0^+D$.

\textbf{(i)}
Let $\tup{d_1,d_2}\in 0^+(C_1\times C_2)$.
Then for all $\tup{x_1,x_2}\in C_1\times C_2$ and $t\ge 0$,
$\tup{x_1+td_1,\ x_2+td_2}\in C_1\times C_2$,
so $x_1+td_1\in C_1$ and $x_2+td_2\in C_2$.
Thus $d_1\in 0^+C_1$ and $d_2\in 0^+C_2$, proving
$0^+(C_1\times C_2)\subseteq 0^+C_1\times 0^+C_2$.
Conversely, if $d_i\in 0^+C_i$ ($i=1,2$), then for any $\tup{x_1,x_2}\in C_1\times C_2$ and $t\ge 0$,
$\tup{x_1+td_1,\ x_2+td_2}\in C_1\times C_2$,
so $\tup{d_1,d_2}\in 0^+(C_1\times C_2)$.
Hence equality holds.

\textbf{(ii)}
Let $S=C\cap\{x\mid Ax=b\}\neq\varnothing$.
If $d\in 0^+S$, then by \eqref{eq:one-point-recession} there exists $\bar x\in S$ with $\bar x+td\in S$ for all $t\ge 0$.
Since $S\subseteq C$, we have $\bar x+td\in C$ for all $t\ge 0$, hence $d\in 0^+C$.
Moreover, $A(\bar x+td)=b$ and $A\bar x=b$ imply $Ad=0$, so $d\in\ker A$.
Thus $0^+S\subseteq 0^+C\cap\ker A$.
Conversely, if $d\in 0^+C\cap\ker A$, then for any $x\in S$ and $t\ge 0$,
$x+td\in C$ (since $d\in0^+C$) and
$A(x+td)=Ax+tAd=b$ (since $x\in S$ and $Ad=0$),
so $x+td\in S$.
Hence $d\in0^+S$ and $0^+C\cap\ker A\subseteq0^+S$, proving equality.

\textbf{(iii) inclusion.}
Let $d\in 0^+C$ and take any $y=Lx\in L(C)$.
Then for all $t\ge 0$,~$y+tL d = L(x+td)\in L(C)$,

so $Ld\in 0^+(L(C))$.
Thus $L(0^+C)\subseteq 0^+(L(C))$.

\textbf{(iii) equality for polyhedra.}
Assume $C$ is a polyhedron, and write it in $\mathbf{H}$-representation as
$C=\{x\mid Gx\ge h\}$.
Its recession cone is $0^+C=\{d\mid Gd\ge 0\}$.
Consider the image $L(C)=\{y\mid \exists x:\ y=Lx,\ Gx\ge h\}$, which is again a polyhedron.
A direction $w$ is in $0^+(L(C))$ if and only if for every feasible $\langle y,x\rangle$ with $y=Lx$, one has $y+tw\in L(C)$ for all $t\ge 0$, i.e., there exist $x_t$ with
$Lx_t = Lx+tw$ and $Gx_t\ge h$.
By polyhedrality, this is equivalent to the existence of some direction $d$ with $Gd\ge 0$ and $Ld=w$ (one can see this by applying the standard characterization of recession directions to the lifted polyhedron $\{\tup{x,y}\mid Gx\ge h,\ y=Lx\}$ and then projecting onto $y$).
Hence $w\in L(0^+C)$, proving $0^+(L(C))\subseteq L(0^+C)$, and thus equality.
\end{proof}

\begin{remark}[How DP interconnections map to set operations]\label{rem:composition-set-operations}
The three interconnections in \cref{def:dp-interconnections} are built from the operations of \cref{lem:composition-recession-polyhedral}:
\begin{enumerate}[label=(\alph*)]
    \item \emph{Parallel.} For feasible sets $\mathcal C_a\subseteq P\times Q$ and $\mathcal C_b\subseteq P'\times Q'$,
    $\mathcal C_{\mathrm{par}}=\mathcal C_a\times \mathcal C_b$.
    \item \emph{Series.} Writing the two intermediate variables separately as $q_a,q_b$,
    \begin{equation*}
    \begin{aligned}
    \widehat{\mathcal C}_{\mathrm{ser}}
    &\defeq
    (\mathcal C_a\times \mathcal C_b)\cap\{\tup{p,q_a,q_b,r}\mid q_a=q_b\},\\
    \mathcal C_{\mathrm{ser}}&=\Pi_{p,r}\!\left(\widehat{\mathcal C}_{\mathrm{ser}}\right).
    \end{aligned}
    \end{equation*}
    \item \emph{Feedback.} For $\mathcal C\subseteq (P\times R_{\mathrm{in}})\times(Q\times R_{\mathrm{out}})$,
    \begin{equation*}
    \begin{aligned}
    \widehat{\mathcal C}_{\mathrm{fb}}
    &\defeq
    \mathcal C\cap\{\tup{p,r_{\mathrm{in}},q,r_{\mathrm{out}}}\mid r_{\mathrm{in}}=r_{\mathrm{out}}\},\\
    \mathcal C_{\mathrm{fb}}&=\Pi_{p,q}\!\left(\widehat{\mathcal C}_{\mathrm{fb}}\right).
    \end{aligned}
    \end{equation*}
\end{enumerate}
Thus every finite composition graph built from parallel/series/feedback applies only these set operations.
\end{remark}

\section{Numerical Experiments}\label{sec:numerical-experiment}
We evaluate the computational pipeline developed in
\cref{sec:lcdp-computation} and its convex extension from
\cref{subsec:convex-epsilon}.
We consider three complementary benchmarks: (i) a synthetic series-chain family that isolates the trade-off between lifted sparsity and variable elimination for \emph{exact} \glspl{abk:lcdp}; (ii) a rigid gripper co-design instance that is natively polyhedral after an exact reparameterization (\emph{exact} \gls{abk:lcdp} regime); and (iii) a rover co-design instance with a bilinear coupling that yields a convex but nonpolyhedral feasible resource set, exercising the \emph{outer-approximation} pipeline.

Unless stated otherwise, all reported runtimes are end-to-end wall-clock times for a \emph{single} query instance, including model construction and solver calls but excluding plotting.\footnote{All experiments were run on a macOS Tahoe 26.2 machine with an Apple M2 Pro CPU (10 cores, 3.2\,GHz) and 16\,GB of RAM.}
All polyhedral queries are solved with \textsc{Bensolve}~\cite{lohne_vector_2017}.
As a baseline, we also compare against \textsc{MCDPL}~\cite{censi2015mathematical,censi2017uncertainty}, 
a general solver for \glspl{abk:dp}.

\begin{remark}[Discretization in \textsc{MCDPL}]\label{rem:mcdpl-discretization}
For any \gls{abk:dp} $\dprb$ whose antichains have infinite cardinality (e.g. a continuous relation such as $f_1 \leq r_1 + r_2$), \textsc{MCDPL} replaces the exact relation with a pair of finite-cardinality \glspl{abk:dp} $\langle\dprb_R^U , \dprb_R^L\rangle$ parameterized by a resolution $R \geq 1$.
The upper-bound \gls{abk:dp} $\dprb_R^U$ consists of $R$ points sampled on the Pareto boundary of the feasible resource set and serves as a pessimistic (inner) approximation of the true relation;
the lower-bound \gls{abk:dp} $\dprb_R^L$ is constructed by taking pairwise meets of $R+1$ successive boundary samples and serves as an optimistic (outer) approximation.
The solver uses Van~der~Corput sampling~\cite[Section 5.2]{lavalle2006planning}, so $\dprb_{R+1}^L \preceq \dprb_{R}^L$ and $\dprb_{R}^U \preceq \dprb_{R+1}^U$, guaranteeing \emph{monotone} convergence to the exact relation as $R \to \infty$.
\end{remark}

\subsection{Series-chain LCDP scalability benchmark}\label{subsec:series-chain-benchmark}
\subsubsection{Benchmark topology}
The benchmark (\cref{fig:series-chain}) is a series chain of~$m$ nodes~$\mathrm{DP}_0,\ldots,\mathrm{DP}_{m-1}$.
Each node has~$k$-dimensional functionality and~$k$-dimensional resource ports.
Consecutive nodes are interconnected in series by identifying the resource of~$\mathrm{DP}_i$ with the functionality of~$\mathrm{DP}_{i+1}$, componentwise.
The external ports are the functionality of~$\mathrm{DP}_0$, fixed to~$f_{\mathrm{ext}}=\mathbf{c}\in\mathbb{R}^k$, and the resource of~$\mathrm{DP}_{m-1}$, which is minimized in the posetal sense.
This acyclic topology isolates series composition and is therefore a natural stress test for the projection-based construction of \cref{subsec:compositional}.

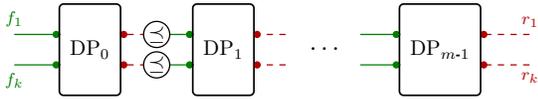
\begin{figure}[tb]
    \centering
    \scalebox{0.8}{\begin{tikzpicture}[DP, dp port sep=3.2]

        \node[dp={2}{2}] (dp0) {$\mathrm{DP}_0$};
        \node[dp={2}{2}, right=1.2cm of dp0] (dp1) {$\mathrm{DP}_1$};

        \node[right=0.8cm of dp1, font=\large] (dots) {$\cdots$};

        \node[dp={2}{2}, right=0.8cm of dots] (dpm) {$\mathrm{DP}_{m\text{-}1}$};

        \draw[rconn, rcname={}, fcname={}] (dp0_res1) to (dp1_fun1);
        \draw[rconn, rcname={}, fcname={}] (dp0_res2) to (dp1_fun2);

        \draw[dashed, dpred, semithick] (dp1_res1) -- ++(0.55cm,0);
        \draw[dashed, dpred, semithick] (dp1_res2) -- ++(0.55cm,0);

        \draw[dpgreen, semithick] ($(dots.east |- dpm_fun1)+(0.15cm,0)$) -- (dpm_fun1);
        \draw[dpgreen, semithick] ($(dots.east |- dpm_fun2)+(0.15cm,0)$) -- (dpm_fun2);
        \draw[funconn, relfun=above, funame={$\F{f_1}$}] (dp0_fun1);
        \draw[funconn, relfun=below, funame={$\F{f_k}$}] (dp0_fun2);
        \draw[runconn, relres=above, runame={$\R{r_1}$}] (dpm_res1);
        \draw[runconn, relres=below, runame={$\R{r_k}$}] (dpm_res2);
    \end{tikzpicture}}
    \caption{Series-chain benchmark with~$m$ nodes.
    Each~$\mathrm{DP}_i$ has~$k$-dimensional functionality and resource ports, and the resource of~$\mathrm{DP}_i$ feeds the functionality of~$\mathrm{DP}_{i+1}$.}\label{fig:series-chain}
    \vspace{-2mm}
\end{figure}

\subsubsection{Component generation}
Each node is a synthetic \gls{abk:ldp} with~$k$ functionality variables~$f\in\mathbb{R}_{\ge 0}^k$,~$k$ resource variables~$r\in\mathbb{R}_{\ge 0}^k$, and~$c\defeq k+1$ linear inequalities.
The first~$k$ rows are \emph{pass-through} constraints
\begin{equation}\label{eq:series-pass}
    f_j \le \gamma_j r_j + d_j,\qquad j=1,\ldots,k,
\end{equation}
with~$\gamma_j \sim \mathrm{Unif}(0.5,1.5)$.
The remaining row is a \emph{cross-coupling trade-off} constraint
\begin{equation}\label{eq:series-trade}
    a\,f_{j^\star} \le \sum_{i=1}^{k} b_i r_i + e,
\end{equation}
with~$a,b_i\sim \mathrm{Unif}(0.5,2.0)$ and fixed~$j^\star=1$.
This produces nontrivial Pareto frontiers while keeping the per-node complexity fixed as~$m$ varies.
All random coefficients are drawn with a fixed seed.
Intercepts are chosen with a feasibility margin~$\mu=0.1$, ensuring that the tested query is feasible.

We further require that each inequality contain at most one functionality variable on the left-hand side.
This matches the most compact \textsc{MCDPL} encoding used here; more general left-hand-side couplings can be represented by auxiliary compositions, but substantially increase the symbolic size of the baseline model and would confound the scaling comparison.

\subsubsection{Compared methods and parameters}
We compare three methods.
First, in the \emph{monolitic (exact)}, we assemble the lifted block-angular polyhedron of \cref{subsec:monolithic} and solve the resulting \gls{abk:molp} exactly with \textsc{Bensolve}~\cite{lohne_vector_2017}.
Second, using \emph{hybrid compositional elimination (exact)}, we apply series contractions using \gls{abk:fme} with redundancy removal; if the intermediate inequality count exceeds a threshold~$\bar c$, we fall back to the monolithic representation for the remaining subsystem.
Finally, with \emph{MCDPL (approximate)}, encode the same chain in the current state of the art \textsc{MCDPL} and compute optimistic/pessimistic Pareto approximations at resolution~$R$~\cite{censi2017uncertainty, zardini2023co}.
Unless stated otherwise, we use~$k=2$,~$c=k+1=3$ constraints per node, seed~$=42$, FME cutoff~$\bar c=50$, \textsc{MCDPL} resolution~$R=5$, and timeout~$300\,\mathrm{s}$.

\paragraph{Results}
For a chain of length $m$, the monolithic model has~$n_{\mathrm{vars}} = 2km$,~$n_{\mathrm{ineq}} = cm + 2k(m-1)$, since each node contributes~$2k$ port variables and~$c$ local inequalities, and each of the~$m-1$ interconnections contributes~$k$ equalities written as two inequalities.
For~$k=2$ and~$c=3$, this gives~$n_{\mathrm{vars}}=4m$ and~$n_{\mathrm{ineq}}=7m-4$, matching \cref{tab:sweep}.

\begin{table}[t]
    \caption{Series-chain benchmark ($k{=}2$, $c{=}3$, fixed $f_{\mathrm{ext}}{=}\mathbf{1}$).
    ``Vars'' and ``Cstr'' count scalar decision variables and inequalities in the feasible-set representation.
    ``$\checkmark$'' denotes exact agreement between the monolithic and hybrid pipelines in Pareto vertex/ray count.}\label{tab:sweep}
    \centering\small
    \setlength{\tabcolsep}{3pt}
    \renewcommand{\arraystretch}{0.9}
    \begin{adjustbox}{max width=0.96\columnwidth, center}
    \begin{tabular}{r|rrr|rrr|rc|c}
        \toprule
        & \multicolumn{3}{c|}{Monolithic} & \multicolumn{3}{c|}{Hybrid FME} & \multicolumn{2}{c|}{MCDPL} & \\
        $m$ & Vars & Cstr & Time & Vars & Cstr & Time & Time & Pts & Agree \\
        \midrule
        3  & 12  & 17  & 0.018 & 4  & 12  & 0.328  & 11.1  & 27 & $\checkmark$ \\
        4  & 16  & 24  & 0.011 & 4  & 14  & 0.584  & 11.2  & 34 & $\checkmark$ \\
        6  & 24  & 38  & 0.010 & 4  & 26  & 1.680  & 47.2  & 39 & $\checkmark$ \\
        8  & 32  & 52  & 0.024 & 4  & 20  & 3.083  & 19.0  & 22 & $\checkmark$ \\
        10 & 40  & 66  & 0.020 & 4  & 29  & 5.598  & 27.0  & 44 & $\checkmark$ \\
        15 & 60  & 101 & 0.031 & 4  & 36  & 31.697 & 43.5  & 44 & $\checkmark$ \\
        20 & 80  & 136 & 0.044 & 4  & 40  & 61.240 & 67.3  & 44 & $\checkmark$ \\
        30 & 120 & 206 & 0.090 & 64 & 151 & 33.612 & 127.0 & 44 & $\checkmark$ \\
        \bottomrule
    \end{tabular}
    \end{adjustbox}
    \vspace{-2mm}
\end{table}

The monolithic pipeline is consistently fastest, with end-to-end time essentially constant at the scale tested.
Its advantage here comes from preserving the sparse block-angular structure of the lifted representation and avoiding any projection step.
The hybrid \gls{abk:fme} pipeline eliminates all internal ports and reduces the final query dimension to the $2k$ external variables, but its preprocessing cost grows with $m$ due to repeated projection and redundancy removal.
The non-monotone final constraint counts in \cref{tab:sweep} reflect the fact that many generated inequalities are subsequently removed as redundant.
At the chosen cutoff~$\bar c=50$, all reported instances remained fully compositional; the ``hybrid'' safeguard is included for robustness on larger chains.
At fixed resolution~$R=10$, \textsc{MCDPL} is one to three orders of magnitude slower than the algebraic pipelines and returns resolution-dependent Pareto point sets rather than the exact polyhedral frontier.
For all tested~$m$, the monolithic and hybrid pipelines return identical Pareto vertex/ray counts, confirming that the compositional elimination preserves the exact feasible set (\cref{prop:compositional-correct}).

\subsection{Rigid gripper co-design: exact polyhedral regime}\label{subsec:gripper-case-study}
The series-chain benchmark isolates algorithmic scaling on synthetic data.
We next consider a small but physically grounded multi-objective co-design problem that is an \emph{exact} \gls{abk:lcdp} after suitable reparameterization, characterizing the zero-approximation-error regime of the proposed framework.

\subsubsection{System and query}
We study a tendon-driven rigid gripper with four conceptual design sub-problems: a motor (maps actuator investment $\tup{m_{\mathrm{mot}},c_{\mathrm{mot}}}$ to achievable tendon force~$f_{\mathrm{grasp}}$), a material-selection module (given a mass and cost budget~$\tup{m_{\mathrm{f}},c_{\mathrm{f}}}$, determines achievable segment lengths~$\tup{L_{\mathrm{Al}},L_{\mathrm{CF}}}$), a gripper-geometry module (maps lengths~$\tup{L_{\mathrm{Al}},L_{\mathrm{CF}}}$ to structural workspace and grasping capacity), and an arm-cost aggregation module (maps workspace requirement $f_{\mathrm{ws}}$ to arm mounting cost~$c_{\mathrm{arm}}$).
For a co-design diagram, refer to \cref{fig:gripper-architecture}.
The system-level functionality is~$
q=\tup{f_{\mathrm{grasp}},f_{\mathrm{ws}}}$, 
where~$f_{\mathrm{grasp}}$ is the required grasping force and~$f_{\mathrm{ws}}$ is the required workspace.
The system-level resources are~$r=\tup{r_{\mathrm{cost}},r_{\mathrm{mass}}}$, which are minimized in the Pareto sense.
We use the query~$f_{\mathrm{grasp}}\ge 100~\mathrm{N}$,~$f_{\mathrm{ws}}\ge 80~\mathrm{mm}$.

\subsubsection{LCDP model formulation}\label{subsubsec:gripper-formulation}
The gripper \gls{abk:lcdp} is parameterized by two continuous design variables,~$\tup{L_{\mathrm{Al}},L_{\mathrm{CF}}}\in\mathbb{R}_{\ge 0}^2$, representing the aluminum and carbon-fibre finger lengths.
These variables are the main coupling medium across the architecture, as they jointly determine workspace, structural feasibility, finger mass, and material cost.
The resulting model has~$n_{\mathrm{f}}=2$ functionality dimensions,~$n_{\mathrm{r}}=8$ resource dimensions, and seven linear inequalities grouped by subsystem.

\begin{subequations}\label{eq:gripper-lcdp}

\paragraph{Actuation (motor sub-problem)}
The available grasp force increases with both motor mass and motor cost.
A heavier motor accommodates a larger rotor, while a costlier motor can achieve higher torque density.
We capture these two effects with the supporting halfspace
\begin{equation}\label{eq:gripper-motor}
    f_{\mathrm{grasp}} \le \alpha_{\mathrm{f}}\, m_{\mathrm{mot}} + \beta_{\mathrm{f}}\, c_{\mathrm{mot}},
\end{equation}
where~$\alpha_{\mathrm{f}}=400\,\mathrm{N/kg}$ and~$\beta_{\mathrm{f}}=0.6\,\mathrm{N/USD}$.

\paragraph{Kinematics and structure (finger sub-problem)}
Finger length determines both reach and structural feasibility.
The usable workspace grows linearly with total finger length,
\begin{equation}\label{eq:gripper-ws}
    f_{\mathrm{ws}} \le k_{\mathrm{w}} \bigl(L_{\mathrm{Al}} + L_{\mathrm{CF}}\bigr),
    \qquad k_{\mathrm{w}}=0.35,
\end{equation}
while sustaining a grasping force requires a minimum total length,
\begin{equation}\label{eq:gripper-struct}
    L_{\mathrm{Al}} + L_{\mathrm{CF}} \ge \gamma_{\mathrm{L}}\, f_{\mathrm{grasp}},
    \qquad \gamma_{\mathrm{L}}=0.5\,\mathrm{mm/N}.
\end{equation}
Thus both functionality requirements couple to the same design pair~$\tup{L_{\mathrm{Al}},L_{\mathrm{CF}}}$.

\paragraph{Material physics (material sub-problem)}
The finger lengths also determine mass and raw material cost:
\begin{equation}\label{eq:gripper-fmass}
    m_{\mathrm{f}} \ge \rho_{\mathrm{Al}}\, L_{\mathrm{Al}} + \rho_{\mathrm{CF}}\, L_{\mathrm{CF}},
\end{equation}
\begin{equation}\label{eq:gripper-fcost}
    c_{\mathrm{f}} \ge p_{\mathrm{Al}}\, L_{\mathrm{Al}} + p_{\mathrm{CF}}\, L_{\mathrm{CF}}.
\end{equation}
We use
$\rho_{\mathrm{Al}}=0.012\,\mathrm{kg/mm}$,
$\rho_{\mathrm{CF}}=0.004\,\mathrm{kg/mm}$,
$p_{\mathrm{Al}}=2.0\,\mathrm{USD/mm}$, and
$p_{\mathrm{CF}}=8.0\,\mathrm{USD/mm}$.
Hence carbon fibre is lighter but more expensive per unit length, which is the main source of the Pareto trade-off.

\paragraph{System-level aggregation}
The system-level objectives aggregate subsystem resources.
Total mass is
\begin{equation}\label{eq:gripper-mass}
    r_{\mathrm{mass}} \ge m_{\mathrm{mot}} + m_{\mathrm{f}},
\end{equation}
and total cost combines motor cost, arm payload penalties, finger material cost, and a workspace-dependent term:
\begin{equation}\label{eq:gripper-cost}
    r_{\mathrm{cost}} \ge c_{\mathrm{mot}} + k_{\mathrm{AP}}\, m_{\mathrm{mot}}
        + c_{\mathrm{f}} + k_{\mathrm{AP}}\, m_{\mathrm{f}}
        + k_{\mathrm{AW}}\, f_{\mathrm{ws}} + c_{\mathrm{A0}},
\end{equation}
with
$k_{\mathrm{AP}}=200\,\mathrm{USD/kg}$,
$k_{\mathrm{AW}}=50\,\mathrm{USD/mm}$, and
$c_{\mathrm{A0}}=300\,\mathrm{USD}$.
Substituting \eqref{eq:gripper-fmass}--\eqref{eq:gripper-fcost} into \eqref{eq:gripper-cost} yields effective per-length coefficients~$p_{\mathrm{Al}} + k_{\mathrm{AP}}\rho_{\mathrm{Al}} = 4.4\,\mathrm{USD/mm}$,~$p_{\mathrm{CF}} + k_{\mathrm{AP}}\rho_{\mathrm{CF}} = 8.8\,\mathrm{USD/mm}$, which compactly combine material price and arm load-bearing penalty.
The density--cost asymmetry between aluminum and carbon fiber drives the Pareto trade-off between total mass and total cost.
\end{subequations}

\begin{figure}[tb]
    \centering
    \includegraphics[width=\columnwidth]{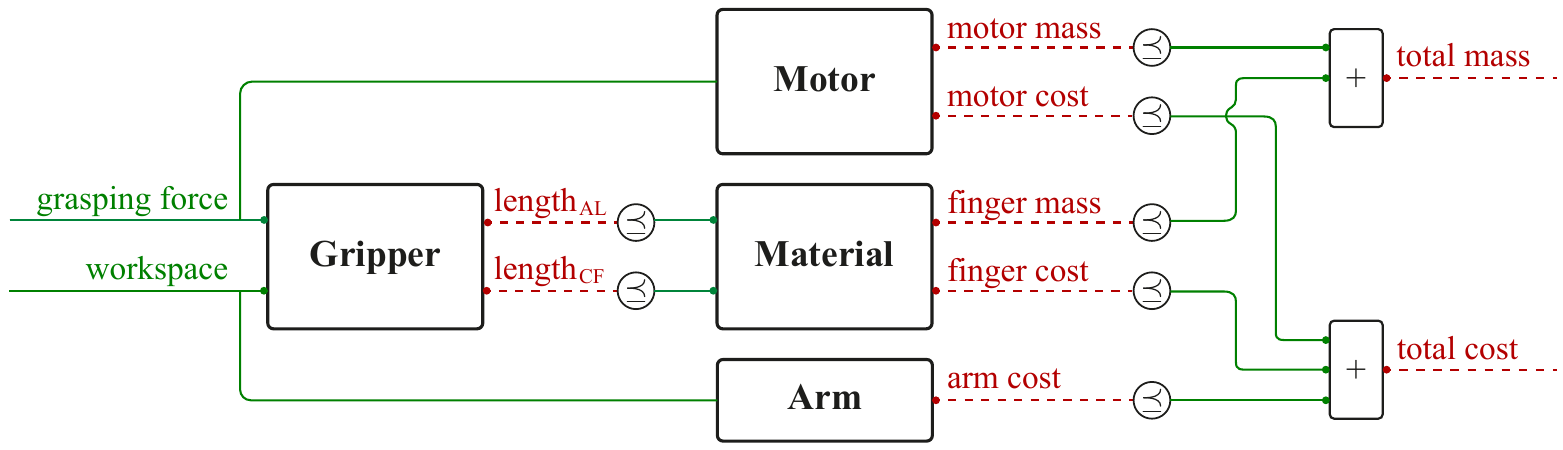}
    \caption{
    Gripper co-design architecture.
    The query~$\tup{f_{\mathrm{grasp}},f_{\mathrm{ws}}}$ is propagated to subsystem design choices; the motor and material subsystems are coupled through the shared design variables~$\tup{L_{\mathrm{Al}},L_{\mathrm{CF}}}$; and contributions are aggregated into system objectives $\tup{r_{\mathrm{cost}},r_{\mathrm{mass}}}$.}
    \label{fig:gripper-architecture}
\end{figure}

\subsubsection{Results}
We compare: (i) the \emph{monolithic exact} \gls{abk:lcdp} pipeline, which solves the induced bi-objective \gls{abk:molp} with \textsc{Bensolve}; and (ii) \textsc{MCDPL} optimistic approximations at resolutions~$R\in\{5,10,20,40,60,85\}$.
Because the model is already polyhedral, this benchmark cleanly separates the exact \gls{abk:lcdp} regime from discretization-based approximation.

\begin{figure*}[tb]
    \centering
    \includegraphics[width=\textwidth]{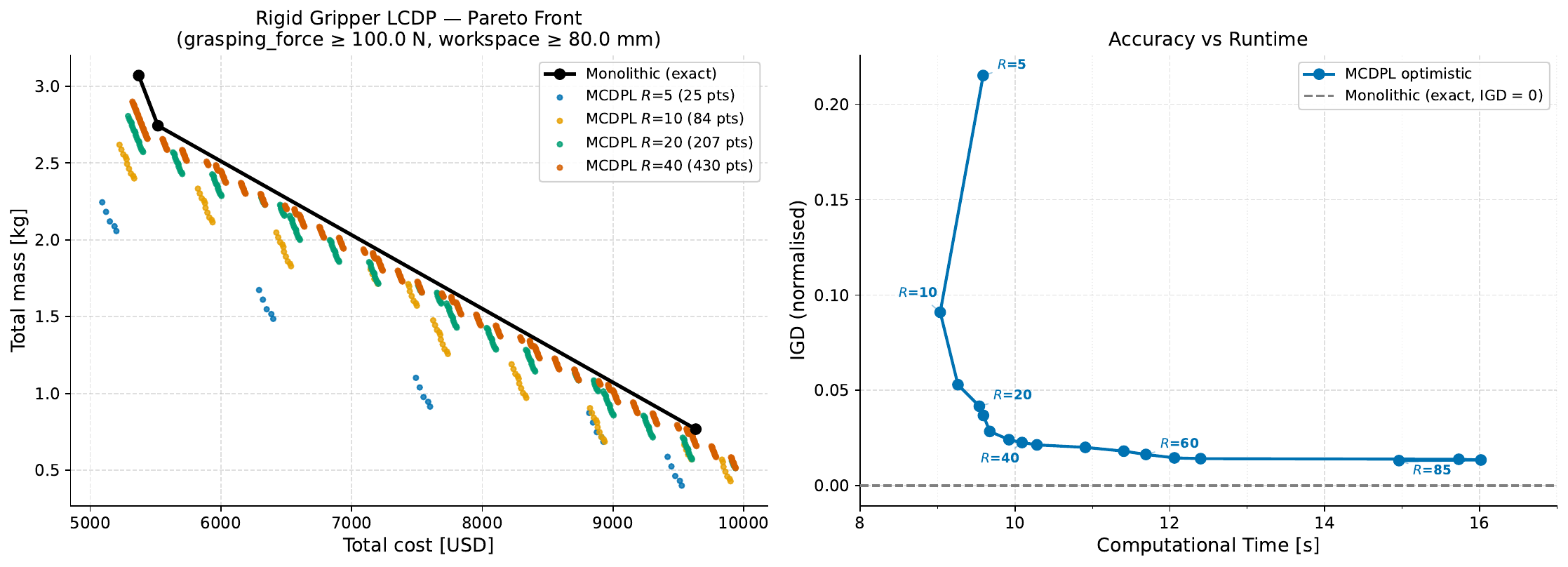}
    \caption{Pareto-front comparison for the rigid gripper case study.
    (Left)~Monolithic exact frontier (3 vertices) versus \textsc{MCDPL} optimistic approximations at resolutions $R\in\{5,10,20,40\}$.
    (Right)~IGD convergence versus computational time: the normalised Inverted Generational Distance of \textsc{MCDPL} optimistic solutions decreases monotonically with increasing resolution, while the exact monolithic solver achieves IGD\,$=$\,0.}
    \label{fig:gripper-pareto}
\end{figure*}

\Cref{tab:gripper-pareto} reports the three exact Pareto vertices returned by the monolithic solver; \cref{tab:gripper-comparison} summarizes timing and approximation quality.
The right panel of \cref{fig:gripper-pareto} visualises the accuracy--runtime trade-off.

\begin{table}[tb]
    \caption{Exact Pareto vertices for the rigid gripper \gls{abk:lcdp} (monolithic solver).
    Query: $f_{\mathrm{grasp}}\ge 100\,\mathrm{N}$,\; $f_{\mathrm{ws}}\ge 80\,\mathrm{mm}$.
    $V_1$--$V_3$ are the only Pareto-optimal solutions; no approximation is introduced.}\label{tab:gripper-pareto}
    \centering
    \renewcommand{\arraystretch}{0.9}
    \begin{adjustbox}{max width=0.96\columnwidth, center}
    \begin{tabular}{clcc}
        \toprule
        Vertex & Dominant material mix & $r_{\mathrm{cost}}$ (USD) & $r_{\mathrm{mass}}$ (kg) \\
        \midrule
        $V_1$ & Pure aluminum\;\;(min cost / max mass) & 5\,371.06 & 3.0696 \\
        $V_2$ & Mixed Al\,/\,CF\;\;(balanced trade-off) & 5\,518.10 & 2.7429 \\
        $V_3$ & Pure carbon fibre (min mass / max cost) & 9\,633.00 & 0.7677 \\
        \bottomrule
    \end{tabular}
    \end{adjustbox}
    \vspace{-2mm}
\end{table}

\begin{table}[tb]
    \caption{Rigid gripper: monolithic exact solver vs.\ \textsc{MCDPL} optimistic approximations.
    IGD is the Inverted Generational Distance computed against the densified exact frontier (200 reference points), normalised to~$[0,1]^2$ using the ideal--nadir range.}\label{tab:gripper-comparison}
    \centering
    \renewcommand{\arraystretch}{0.8}
    \begin{adjustbox}{max width=1\columnwidth, center}
    \begin{tabular}{lrrcc}
        \toprule
        Method & $R$ & \#Pts & Time (s) & IGD \\
        \midrule
        Monolithic (exact)  & ---  &    3 & $4.8\times10^{-2}$ & 0 \\
        \textsc{MCDPL} opt. &   5  &   25 & $9.59$             & $2.15\times10^{-1}$ \\
        \textsc{MCDPL} opt. &  10  &   84 & $9.03$             & $9.09\times10^{-2}$ \\
        \textsc{MCDPL} opt. &  20  &  207 & $9.54$             & $4.16\times10^{-2}$ \\
        \textsc{MCDPL} opt. &  40  &  430 & $10.09$            & $2.26\times10^{-2}$ \\
        \textsc{MCDPL} opt. &  60  &  642 & $11.69$            & $1.64\times10^{-2}$ \\
        \textsc{MCDPL} opt. &  85  &  944 & $14.96$            & $1.31\times10^{-2}$ \\
        \bottomrule
    \end{tabular}
    \end{adjustbox}
    \vspace{-2mm}
\end{table}

The monolithic \gls{abk:lcdp} solve enumerates the exact frontier (3 vertices) in approximately~$48$ ms end-to-end.
In contrast, \textsc{MCDPL} optimistic runs require approximately~$9$--$15$ s per resolution and produce increasingly dense discretized fronts (from 25 points at $R=5$ to 944 points at $R=85$).
Approximation quality is measured by the normalised Inverted Generational Distance (IGD), computed against a densified reference front (200 points interpolated along the exact piecewise-linear Pareto curve) and normalised to~$[0,1]^2$ using the ideal--nadir range.
The IGD decreases monotonically from~$2.15\times 10^{-1}$ at~$R=5$ to~$1.31\times 10^{-2}$ at~$R=85$, confirming theoretical convergence of the optimistic approximation.
Even at~$R=85$, the exact monolithic pipeline remains about~$300\times$ faster while delivering zero approximation error.

\subsection{Rover co-design case study with bilinear coupling}\label{subsec:rover}
The rigid gripper benchmark illustrates the \emph{exact} polyhedral regime.
We now consider a small but representative example that falls outside the exact \gls{abk:lcdp} class, i.e., a rover co-design problem with a bilinear battery coupling.
For the fixed query studied here, the induced feasible resource set is closed and convex but not polyhedral, making it a natural test case for the outer-approximation framework of \cref{subsec:convex-epsilon}.

\begin{figure}[tb]
    \centering
    \includegraphics[width=\columnwidth]{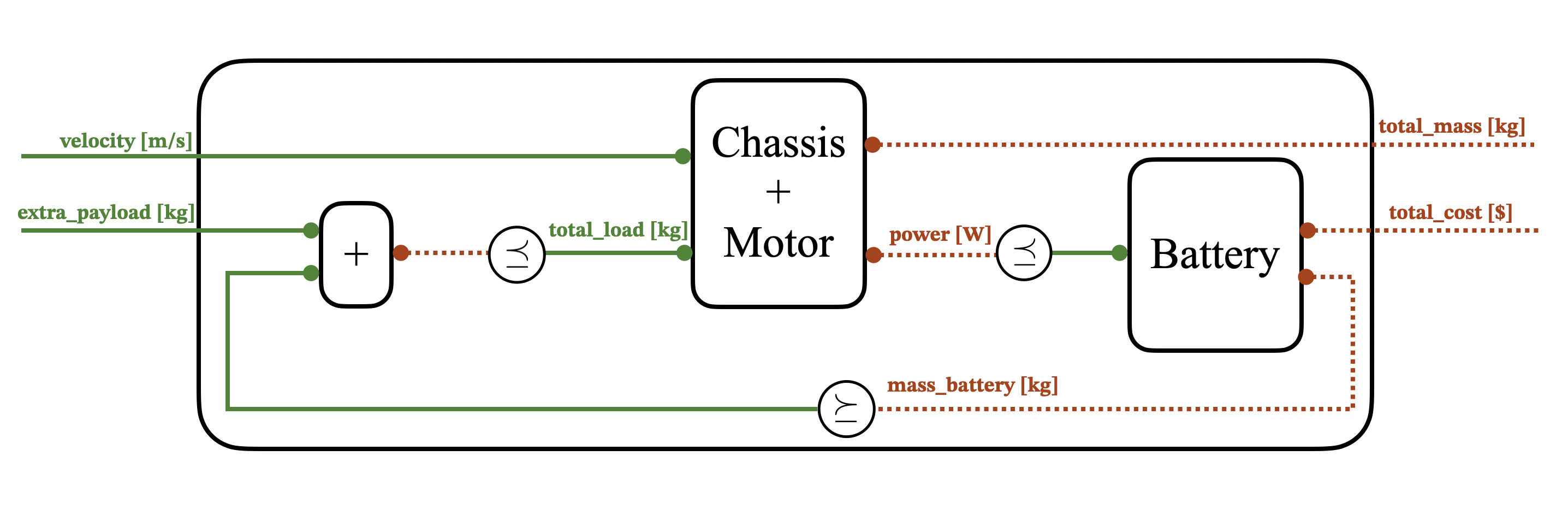}
    \caption{Rover co-design architecture. The query $\tup{m_{\mathrm{pay}},v_{\mathrm{req}}}$ propagates to chassis and battery choices; battery and chassis are coupled through power and mass; and the system-level objectives are $\tup{c_{\mathrm{sys}},m_{\mathrm{sys}}}$.}
    \label{fig:rover-codesign-architecture}
    \vspace{-3mm}
\end{figure}

\subsubsection{Model}
The rover architecture in \cref{fig:rover-codesign-architecture} couples a chassis subsystem and a battery subsystem.
The system-level functionality query is~$f=\tup{m_{\mathrm{pay}},v_{\mathrm{req}}}$, and the system-level resource vector to be minimized is~$r=\tup{c_{\mathrm{sys}},m_{\mathrm{sys}}}$.\footnote{In \cref{fig:rover-codesign-architecture}, these correspond to $\tup{\texttt{extra\_payload},\texttt{velocity}}$ and $\tup{\texttt{overall\_cost},\texttt{total\_mass}}$, respectively.}

\paragraph{Chassis}
The chassis provides total load $\ell$ and velocity $v$, and requires power $p_{\mathrm{ch}}$ and total mass $m_{\mathrm{tot}}$:
\begin{equation}\label{eq:rover-chassis}
p_{\mathrm{ch}} \ge p_0 + \kappa(\ell + m_{\mathrm{s}})v, \quad m_{\mathrm{tot}} \ge \ell + m_{\mathrm{s}},
\end{equation}
with $m_{\mathrm{s}}=770~\mathrm{kg}$, $p_0=10~\mathrm{W}$, and
$\kappa=1~\mathrm{W\cdot s/(m\cdot kg)}$.

\paragraph{Battery}
The battery provides power $p_{\mathrm{bat}}$ and requires battery mass $m_{\mathrm{b}}$ and cost $c$:
\begin{equation}\label{eq:rover-battery}
p_{\mathrm{bat}} \le \alpha m_{\mathrm{b}} + \beta c + \gamma m_{\mathrm{b}} c,
\end{equation}
with $\alpha=0.975~\mathrm{W/kg}$, $\beta=0.02733~\mathrm{W/USD}$, and $\gamma=0.0005~\mathrm{W/(kg\cdot USD)}$.
The bilinear term $\gamma m_{\mathrm{b}} c$ is the sole source of nonpolyhedrality.

\paragraph{Interconnection}
The two subsystems are wired through
\begin{equation}\label{eq:rover-wiring}
\begin{aligned}
m_{\mathrm{pay}}+m_{\mathrm{b}} \le \ell,\quad p_{\mathrm{ch}} \le p_{\mathrm{bat}},\quad v_{\mathrm{req}} \le v,\\
c_{\mathrm{sys}} \ge c,\quad m_{\mathrm{sys}} \ge m_{\mathrm{tot}}.
\end{aligned}
\end{equation}

We use the query
\(
m_{\mathrm{pay}}=0.1~\mathrm{kg}
\)
and
\(
v_{\mathrm{req}}=0.05~\mathrm{m/s}
\),
and minimize the bi-objective resource vector
\(
\tup{c_{\mathrm{sys}},m_{\mathrm{sys}}}
\).

\subsubsection{Compared methods}
We compare four solution routes.

\paragraph{Method I: high-resolution reference frontier}
We densely sample the objective plane $\tup{c_{\mathrm{sys}},m_{\mathrm{sys}}}$, test nonlinear feasibility of \eqref{eq:rover-chassis}--\eqref{eq:rover-wiring}, and retain nondominated feasible samples.
This produces a high-resolution reference frontier used for error reporting.

\paragraph{Method II: \textsc{MCDPL}}
We solve the same query with \textsc{MCDPL} at resolutions $R\in\{5,10,20,40,60,80\}$, obtaining optimistic and pessimistic Pareto approximations.

\paragraph{Method III: tangent-plane outer approximation + monolithic \gls{abk:molp}}
For the fixed query, the required chassis power is
    \begin{equation}\label{eq:preq}
        p_{\mathrm{req}}(m_{\mathrm{b}})=p_0+\kappa\bigl(m_{\mathrm{s}}+m_{\mathrm{pay}}+m_{\mathrm{b}}\bigr)\,v_{\mathrm{req}}.
    \end{equation}
Combining \eqref{eq:rover-battery} with $p_{\mathrm{ch}}\le p_{\mathrm{bat}}$ yields
    \begin{equation}\label{eq:phi}
        c \ge \phi(m_{\mathrm{b}})
        \defeq
        \frac{p_{\mathrm{req}}(m_{\mathrm{b}})-\alpha m_{\mathrm{b}}}{\beta+\gamma m_{\mathrm{b}}}.
\end{equation}
Writing $\phi(m_{\mathrm{b}})=(a+b m_{\mathrm{b}})/(\beta+\gamma m_{\mathrm{b}})$ with
    $a=p_0+\kappa(m_{\mathrm{s}}+m_{\mathrm{pay}})v_{\mathrm{req}}$ and
    $b=\kappa\,v_{\mathrm{req}}-\alpha$, one verifies that
    \[
    \phi''(m_{\mathrm{b}})
    =
    \frac{2\gamma\bigl(\gamma a-\beta b\bigr)}{(\beta+\gamma m_{\mathrm{b}})^3}
    >0
    \]
    for the parameter values; hence~$\phi$ is convex on~$m_{\mathrm{b}}\ge 0$.
    We select anchor points~$\{m_{\mathrm{b}}^{(i)}\}_{i=1}^N$ and tangent cuts
    \begin{equation}\label{eq:rover-tangent-cuts}
        c \ge \phi(m_{\mathrm{b}}^{(i)}) + \phi'(m_{\mathrm{b}}^{(i)})(m_{\mathrm{b}}-m_{\mathrm{b}}^{(i)}),
        \qquad i=1,\ldots,N.
    \end{equation}
    Because tangents under-estimate a convex function, \eqref{eq:rover-tangent-cuts} defines a polyhedral \emph{outer} approximation of the feasible set.
    Together with the linear chassis and wiring constraints, this yields an $N$-dependent \gls{abk:lcdp} solved via the monolithic \gls{abk:molp} pipeline.

\paragraph{Method IV: direct cutting-plane refinement in resource space}
Starting from a coarse outer polyhedron in $\tup{c_{\mathrm{sys}},m_{\mathrm{sys}}}$, we iteratively add supporting half-space cuts at boundary points and re-solve the resulting \gls{abk:molp}.
This produces a sequence of polyhedral outer approximations in resource space.

\subsubsection{Metrics}
We report both efficiency and accuracy.
Efficiency is measured by end-to-end runtime and, for Methods III--IV, the number of inequalities in the polyhedral surrogate.
Accuracy is measured against the reference frontier using directional one-sided excesses on a fixed normalization box induced by the reference frontier.
For outer approximations, we report~$e[P,C]$ and~$e[C,P]$.
For inner approximations, the primary directional metric is~$e[C,P^-]$ (coverage of the reference by the inner front), rather than~$e[P^-,C]$.
To reduce discretization-induced oscillations and obtain a scale-aware indicator for inner fronts, we additionally report the normalized hypervolume deficit
\[
\Delta HV_{\mathrm{rel}}^-
\;\defeq\;
\frac{HV(C)-HV(P^-)}{HV(C)}.
\]
For optimistic fronts we report the dual hypervolume excess
\(
\Delta HV_{\mathrm{rel}}^+ = (HV(P^+)-HV(C))/HV(C)
\).
For Methods III--IV, the recession-cone discrepancy is $\delta=0$ by \cref{thm:epsilon-linear-approx}.
We also report the maximum and mean mass gap (kg) for physical interpretability.

\subsubsection{Results}
\Cref{fig:rover-pareto-comparison,fig:rover-convergence} and \cref{tab:rover-method-comparison} summarize the rover results.
Both \gls{abk:ldp}-based outer-approximation pipelines improve monotonically as the number of cuts grows, consistent with the $\tup{\varepsilon,0}$ theory of \cref{subsec:convex-epsilon}.
For Method III (tangent-plane surrogate), at $N=200$ we obtain~$e[P,C] = 2.36\times 10^{-4}$,~$e[C,P] = 5.37\times 10^{-3}$,~$\Delta HV_{\mathrm{rel}}^- = 3.75\times 10^{-3}$

with end-to-end runtime $2.53\times 10^{-2}\,\mathrm{s}$.
Method IV reaches a comparable vertex excess at $N=200$ ($e[P,C]=2.37\times 10^{-4}$), but has larger coverage excess ($e[C,P]=1.33\times 10^{-2}$) and is much slower ($3.38\times 10^{1}\,\mathrm{s}$) because each refinement step requires an additional \gls{abk:molp} solve.

Under the new inner-approximation metrics, MCDPL pessimistic fronts now exhibit the expected smooth directional improvement:
$e[C,P^-]$ decreases from $3.74\times 10^{-1}$ at $R=5$ to $1.13\times 10^{-1}$ at $R=80$, and
$\Delta HV_{\mathrm{rel}}^-$ decreases from $4.93\times 10^{-1}$ to $1.54\times 10^{-1}$.
However, even at the best tested MCDPL setting ($R=80$), the inner-front errors remain far above Method III while requiring $2.45\times 10^{3}\,\mathrm{s}$ of solve time.
Thus, the LDP outer-approximation pipeline still offers the strongest accuracy--time trade-off by a wide margin.

\begin{figure*}[tb]
    \centering
    \includegraphics[width=\textwidth]{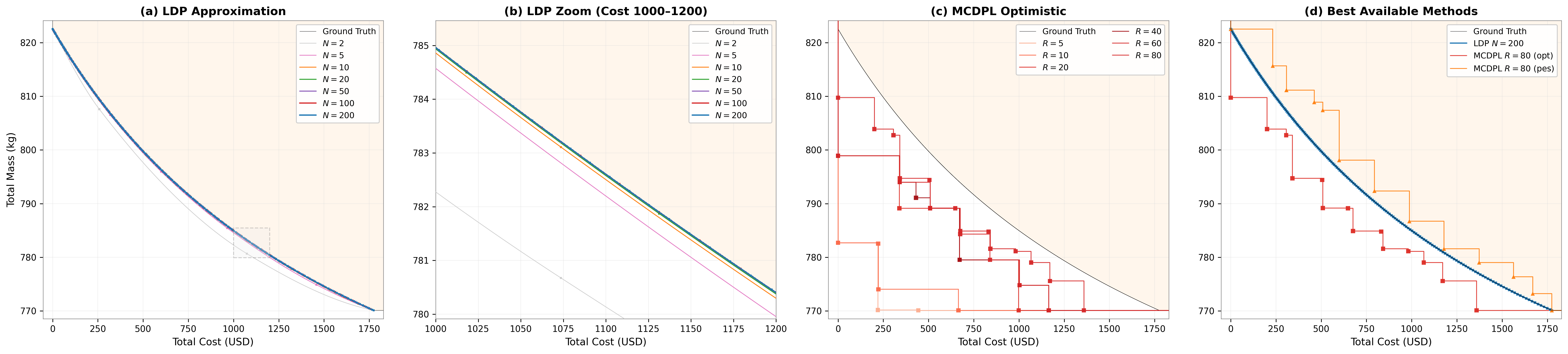}
    \caption{Pareto-front comparison for the rover case study.
    (a)~LDP tangent-plane approximation at several $N$ vs.\ ground truth;
    (b)~zoomed view highlighting convergence in the cost range 1000--1200~USD;
    (c)~MCDPL optimistic approximations at resolutions $R\in\{5,10,20,40,60,80\}$;
    (d)~best available result from each method compared against the ground-truth frontier.}
    \label{fig:rover-pareto-comparison}
    \vspace{-2mm}
\end{figure*}

\begin{figure*}[tb]
    \centering
    \includegraphics[width=\textwidth]{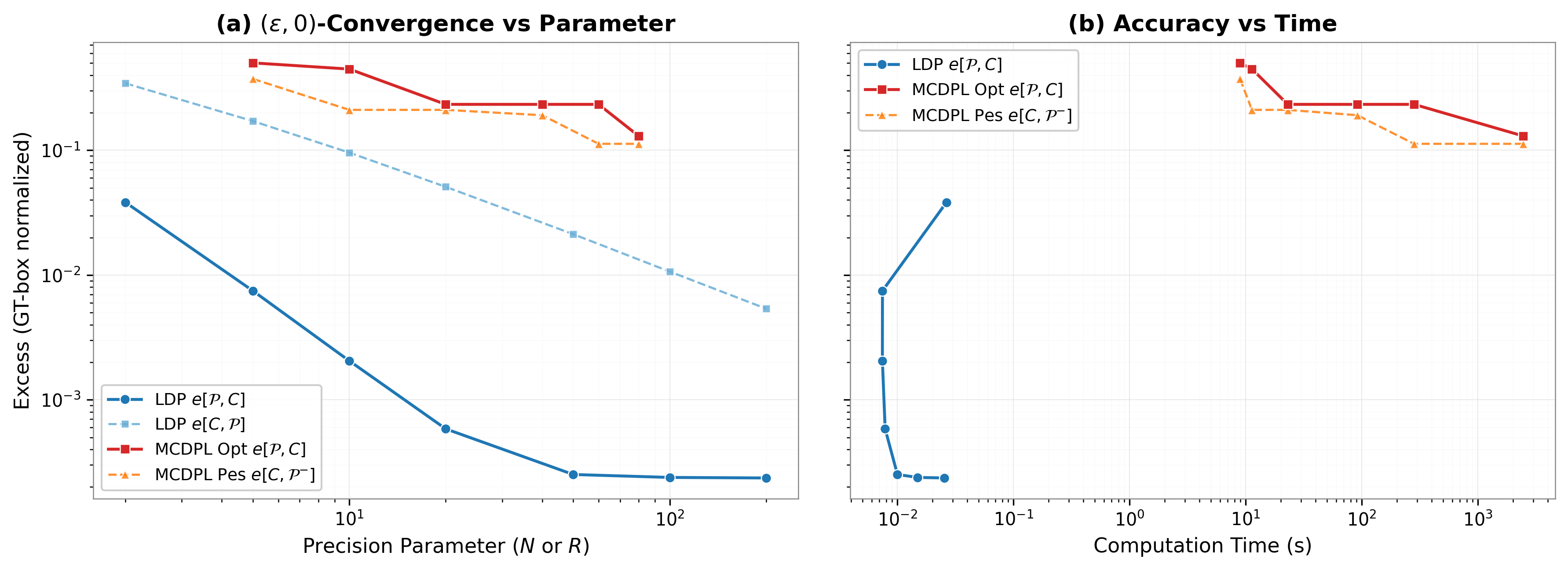}
    \caption{Convergence and efficiency comparison.
    (a)~Directional excess metrics versus precision parameter: $e[\mathcal{P},C]$ and $e[C,\mathcal{P}]$ for LDP, $e[\mathcal{P}^+,C]$ for MCDPL optimistic, and $e[C,\mathcal{P}^-]$ for MCDPL pessimistic;
    (b)~the same directional excess metrics plotted against computation time, showing that LDP achieves markedly lower error at a fraction of the MCDPL runtime.}
    \label{fig:rover-convergence}
    \vspace{-2mm}
\end{figure*}

\begin{table*}[tb]
  \caption{Rover case study: accuracy--time comparison across methods.
  For Methods~III--IV, $\delta=0$ holds by \cref{thm:epsilon-linear-approx}.
  The column $\Delta HV_{\mathrm{rel}}$ reports a one-sided normalized hypervolume gap with respect to the reference frontier $C$:
  hypervolume \emph{deficit} $(HV(C)-HV(P^-))/HV(C)$ for inner fronts and hypervolume \emph{excess} $(HV(P^+)-HV(C))/HV(C)$ for optimistic fronts.
  For MCDPL pessimistic rows, the coverage column corresponds to $e[C,\mathcal{P}^{-}]$.}
  \label{tab:rover-method-comparison}
  \centering
  \renewcommand{\arraystretch}{0.8}
  \begin{adjustbox}{max width=0.98\textwidth, center}
  \setlength{\tabcolsep}{2.8pt}
  \begin{tabular*}{\textwidth}{@{\extracolsep{\fill}}llcccccccc}
    \toprule
    Method & Parameter & \(n_{\mathrm{ineq}}\) & Pts & \(e[\mathcal{P},C]\) & \(e[C,\mathcal{P}]\) & \(\Delta HV_{\mathrm{rel}}\) & Max gap (kg) & Mean gap (kg) & Time (s) \\
    \midrule
    \multirow{7}{*}{\shortstack[l]{Tangent-plane\\linearization\\(Method~III)}}
      & \(N=2\)   & 11  & 4   & \(3.83\times 10^{-2}\) & \(3.44\times 10^{-1}\) & \(2.69\times 10^{-1}\) & \(2.53\) & \(7.59\times 10^{-1}\) & \(2.65\times 10^{-2}\) \\
      & \(N=5\)   & 14  & 7   & \(7.47\times 10^{-3}\) & \(1.71\times 10^{-1}\) & \(1.40\times 10^{-1}\) & \(4.59\times 10^{-1}\) & \(1.30\times 10^{-1}\) & \(7.44\times 10^{-3}\) \\
      & \(N=10\)  & 19  & 12  & \(2.05\times 10^{-3}\) & \(9.57\times 10^{-2}\) & \(7.57\times 10^{-2}\) & \(1.22\times 10^{-1}\) & \(3.60\times 10^{-2}\) & \(7.43\times 10^{-3}\) \\
      & \(N=20\)  & 29  & 22  & \(5.84\times 10^{-4}\) & \(5.09\times 10^{-2}\) & \(3.91\times 10^{-2}\) & \(3.40\times 10^{-2}\) & \(1.23\times 10^{-2}\) & \(7.85\times 10^{-3}\) \\
      & \(N=50\)  & 59  & 52  & \(2.52\times 10^{-4}\) & \(2.12\times 10^{-2}\) & \(1.58\times 10^{-2}\) & \(1.30\times 10^{-2}\) & \(5.67\times 10^{-3}\) & \(1.00\times 10^{-2}\) \\
      & \(N=100\) & 109 & 102 & \(2.38\times 10^{-4}\) & \(1.07\times 10^{-2}\) & \(7.80\times 10^{-3}\) & \(1.06\times 10^{-2}\) & \(4.72\times 10^{-3}\) & \(1.49\times 10^{-2}\) \\
      & \(N=200\) & 209 & 202 & \(\mathbf{2.36\times 10^{-4}}\) & \(\mathbf{5.37\times 10^{-3}}\) & \(\mathbf{3.75\times 10^{-3}}\) & \(\mathbf{1.06\times 10^{-2}}\) & \(\mathbf{4.48\times 10^{-3}}\) & \(2.53\times 10^{-2}\) \\
    \midrule
    \multirow{7}{*}{\shortstack[l]{Convex approx.\\+ MOLP\\(Method~IV)}}
      & \(N=2\)   & 11  & 4   & \(6.37\times 10^{-2}\) & \(3.81\times 10^{-1}\) & --- & \(3.86\) & \(8.65\times 10^{-1}\) & \(1.54\times 10^{-2}\) \\
      & \(N=5\)   & 14  & 7   & \(1.05\times 10^{-2}\) & \(2.07\times 10^{-1}\) & --- & \(6.95\times 10^{-1}\) & \(1.53\times 10^{-1}\) & \(1.90\times 10^{-2}\) \\
      & \(N=10\)  & 19  & 12  & \(1.05\times 10^{-2}\) & \(1.55\times 10^{-1}\) & --- & \(6.94\times 10^{-1}\) & \(9.92\times 10^{-2}\) & \(5.27\times 10^{-2}\) \\
      & \(N=20\)  & 29  & 22  & \(7.95\times 10^{-4}\) & \(5.72\times 10^{-2}\) & --- & \(5.25\times 10^{-2}\) & \(1.37\times 10^{-2}\) & \(1.89\times 10^{-1}\) \\
      & \(N=50\)  & 59  & 52  & \(2.73\times 10^{-4}\) & \(2.84\times 10^{-2}\) & --- & \(1.75\times 10^{-2}\) & \(6.22\times 10^{-3}\) & \(1.26\times 10^{0}\) \\
      & \(N=100\) & 109 & 102 & \(2.73\times 10^{-4}\) & \(2.64\times 10^{-2}\) & --- & \(1.69\times 10^{-2}\) & \(5.06\times 10^{-3}\) & \(6.33\times 10^{0}\) \\
      & \(N=200\) & 209 & 201 & \(2.37\times 10^{-4}\) & \(1.33\times 10^{-2}\) & --- & \(1.06\times 10^{-2}\) & \(4.55\times 10^{-3}\) & \(3.38\times 10^{1}\) \\
    \midrule
    \multirow{6}{*}{\shortstack[l]{MCDPL\\(optimistic)}}
      & \(R=5\)  & --- & 3  & \(5.02\times 10^{-1}\) & \(7.60\times 10^{-1}\) & \(5.90\times 10^{-1}\) & \(4.10\times 10^{1}\) & \(3.82\times 10^{1}\) & \(8.94\times 10^{0}\) \\
      & \(R=10\) & --- & 4  & \(4.48\times 10^{-1}\) & \(7.60\times 10^{-1}\) & \(5.59\times 10^{-1}\) & \(3.98\times 10^{1}\) & \(3.13\times 10^{1}\) & \(1.13\times 10^{1}\) \\
      & \(R=20\) & --- & 4  & \(2.33\times 10^{-1}\) & \(4.51\times 10^{-1}\) & \(3.01\times 10^{-1}\) & \(2.36\times 10^{1}\) & \(1.66\times 10^{1}\) & \(2.30\times 10^{1}\) \\
      & \(R=40\) & --- & 7  & \(2.33\times 10^{-1}\) & \(4.51\times 10^{-1}\) & \(2.76\times 10^{-1}\) & \(2.36\times 10^{1}\) & \(1.33\times 10^{1}\) & \(9.15\times 10^{1}\) \\
      & \(R=60\) & --- & 7  & \(2.33\times 10^{-1}\) & \(4.51\times 10^{-1}\) & \(2.58\times 10^{-1}\) & \(2.36\times 10^{1}\) & \(1.23\times 10^{1}\) & \(2.82\times 10^{2}\) \\
      & \(R=80\) & --- & 14 & \(1.30\times 10^{-1}\) & \(2.44\times 10^{-1}\) & \(1.65\times 10^{-1}\) & \(1.28\times 10^{1}\) & \(7.10\times 10^{0}\) & \(2.45\times 10^{3}\) \\
    \midrule
    \multirow{6}{*}{\shortstack[l]{MCDPL\\(pessimistic)}}
      & \(R=5\)  & --- & 4  & \(1.18\times 10^{-1}\) & \(3.74\times 10^{-1}\) & \(4.93\times 10^{-1}\) & \(8.73\) & \(5.78\) & \(8.94\times 10^{0}\) \\
      & \(R=10\) & --- & 6  & \(1.55\times 10^{-1}\) & \(2.11\times 10^{-1}\) & \(3.72\times 10^{-1}\) & \(1.03\times 10^{1}\) & \(6.20\) & \(1.13\times 10^{1}\) \\
      & \(R=20\) & --- & 9  & \(1.55\times 10^{-1}\) & \(2.11\times 10^{-1}\) & \(3.49\times 10^{-1}\) & \(1.03\times 10^{1}\) & \(6.20\) & \(2.30\times 10^{1}\) \\
      & \(R=40\) & --- & 8  & \(1.18\times 10^{-1}\) & \(1.91\times 10^{-1}\) & \(3.11\times 10^{-1}\) & \(8.73\) & \(5.78\) & \(9.15\times 10^{1}\) \\
      & \(R=60\) & --- & 15 & \(9.35\times 10^{-2}\) & \(1.13\times 10^{-1}\) & \(1.77\times 10^{-1}\) & \(7.96\) & \(3.51\) & \(2.82\times 10^{2}\) \\
      & \(R=80\) & --- & 14 & \(9.35\times 10^{-2}\) & \(1.13\times 10^{-1}\) & \(1.54\times 10^{-1}\) & \(7.96\) & \(2.65\) & \(2.45\times 10^{3}\) \\
    \bottomrule
  \end{tabular*}
  \end{adjustbox}
\end{table*}

\subsection{Discussion}
The benchmarks separate the exact and approximate regimes of the framework.
For polyhedral \glspl{abk:lcdp}, the monolithic lifted formulation is best suited to single-shot queries: it preserves sparsity, avoids projection, and computes exact Pareto frontiers quickly.
Compositional elimination is also exact, but its cost is driven by intermediate projection and redundancy removal, making it most useful when a reduced model is needed for repeated use.
For convex but nonpolyhedral models, polyhedral outer approximations extend the same \gls{abk:molp}-based pipeline beyond the exact linear regime.
The rover benchmark shows that directional inner-front metrics give a clearer view of pessimistic convergence, while accurate~$\left(\varepsilon,0\right)$ outer approximations remain computationally cheap.

\section{Conclusions and Future Work}
\label{sec:conclusions}
This paper introduced \glspl{abk:ldp}, a polyhedral subclass of monotone co-design problems whose feasible functionality--resource relations are described by linear inequalities over Euclidean \glspl{abk:poset}.
We showed that \glspl{abk:ldp} are closed under classic co-design interconnections, so any \gls{abk:lcdp} built from linear components induces a system-level \gls{abk:ldp} whose queries reduce exactly to \glspl{abk:molp}/\glspl{abk:vlp}, yielding a bridge between monotone co-design and polyhedral multiobjective optimization.

We then developed two exact computational pipelines: a \emph{monolithic} formulation that preserves lifted block-angular structure and is effective for single-shot queries, and a \emph{compositional} formulation that eliminates internal variables to produce a reduced external-coordinate model.
Beyond the exact linear case, we showed that convex co-design resource queries admit arbitrarily accurate polyhedral outer approximations; for standard nonnegative resource cones, the recession-cone error is structurally exact ($\delta=0$), so approximation quality is controlled entirely by the bounded-part error~$\varepsilon$.

The numerical results validate both regimes.
On synthetic series-chain \glspl{abk:lcdp}, the monolithic pipeline is fastest, while the compositional pipeline preserves the exact Pareto frontier with lower final query dimension.
On the rigid gripper benchmark, the exact \gls{abk:lcdp} model computes the true frontier with zero approximation error and large runtime gains over \textsc{MCDPL}.
On the rover benchmark, tangent-plane outer approximations produce accurate $\left(\varepsilon,0\right)$ surrogates and a substantially better accuracy--time trade-off than the discretization-based baseline.
Overall, linear co-design emerges as both a rigorous subclass of monotone co-design and a practical backbone for exact and approximate Pareto-set computation.

Future work includes extending the theory to broader convex design classes, exploiting the monolithic block-angular structure through decomposition-based multiobjective algorithms, integrating uncertainty-aware co-design~\cite{huang2025composable, huang2026distributionaluncertaintyadaptivedecisionmaking}, and studying richer interconnection topologies and hybrid monolithic/compositional schemes.

\bibliographystyle{IEEEtran}
\bibliography{paper}
\end{document}